\DeclareFontShape{OT1}{cmr}{bx}{sc}{<-> cmbcsc10}{}
\newcommand{\la}{\lambda}
\newcommand{\R}{\mathbb R}
\newcommand{\bt}{\begin{theorem}}
\newcommand{\et}{\end{theorem}}
\newcommand{\bl}{\begin{lemma}}
\newcommand{\el}{\end{lemma}}
\newcommand{\bd}{\begin{definition}}
\newcommand{\ed}{\end{definition}}
\newcommand{\bc}{\begin{corollary}}
\newcommand{\ec}{\end{corollary}}
\newcommand{\bp}{\begin{proof}}
\newcommand{\ep}{\end{proof}}
\newcommand{\bx}{\begin{example}}
\newcommand{\ex}{\end{example}}
\newcommand{\bi}{\begin{exercise}}
\newcommand{\ei}{\end{exercise}}
\newcommand{\bo}{\begin{prop}}
\newcommand{\eo}{\end{prop}}
\newcommand{\br}{\begin{remark}}
\newcommand{\er}{\end{remark}}
\newcommand{\be}{\begin{equation}}
\newcommand{\ee}{\end{equation}}
\newcommand{\ba}{\begin{align}}
\newcommand{\ea}{\end{align}}
\newcommand{\bn}{\begin{enumerate}}
\newcommand{\en}{\end{enumerate}}
\newcommand{\bg}{\begin{align*}}
\newcommand{\bcs}{\begin{cases}}
\newcommand{\ecs}{\end{cases}}
\newcommand{\bean}{\begin{eqnarray*}}
\newcommand{\eean}{\end{eqnarray*}}
\newtheorem{definition}{Definition}[section]
\newtheorem{theorem}{Theorem}[section]
\newtheorem{lemma}{Lemma}[section]
\newtheorem{prop}{Proposition}[section]
\newtheorem{remark}{Remark}[section]
\numberwithin{equation}{section}
\begin{document}
\title[Blow-up for supercritical heat equation]{New type II Finite time blow-up for the energy supercritical heat equation}

\author[M. del Pino]{Manuel del Pino}
\address{\noindent
Department of Mathematical Sciences,
University of Bath, Bath BA2 7AY, United Kingdom}
\email{m.delpino@bath.ac.uk}

\author[C. Lai]{Chen-Chih Lai}
\address{\noindent
Department of Mathematics,
University of British Columbia, Vancouver, B.C., V6T 1Z2, Canada}
\email{chenchih@math.ubc.ca}

\author[M. Musso]{Monica Musso}
\address{\noindent
Department of Mathematical Sciences,
University of Bath, Bath BA2 7AY, United Kingdom}
\email{m.musso@bath.ac.uk}

\author[J. Wei]{Juncheng Wei}
\address{\noindent
Department of Mathematics,
University of British Columbia, Vancouver, B.C., V6T 1Z2, Canada}
\email{jcwei@math.ubc.ca}

\author[Y. Zhou]{Yifu Zhou}
\address{\noindent
Department of Mathematics,
University of British Columbia, Vancouver, B.C., V6T 1Z2, Canada}
\email{yfzhou@math.ubc.ca}

\begin{abstract}
We consider the energy supercritical heat equation with the $(n-3)$-th Sobolev exponent
\begin{equation*}
\begin{cases}
u_t=\Delta u+u^{3},~&\mbox{ in } \Omega\times (0,T),\\
u(x,t)=u|_{\partial\Omega},~&\mbox{ on } \partial\Omega\times (0,T),\\
u(x,0)=u_0(x),~&\mbox{ in } \Omega,
\end{cases}
\end{equation*}
where $5\leq n\leq 7$, $\Omega=\R^n$ or $\Omega \subset \R^n$ is a smooth, bounded domain enjoying special symmetries. We construct type II finite time blow-up solution $u(x,t)$ with the singularity taking place along an $(n-4)$-dimensional {\em shrinking sphere} in $\Omega$. More precisely, at leading order, the solution $u(x,t)$ is of the sharply scaled form
$$u(x,t)\approx \la^{-1}(t)\frac{2\sqrt{2}}{1+\left|\frac{(r,z)-(\xi_r(t),\xi_z(t))}{\la(t)}\right|^2}$$
where $r=\sqrt{x_1^2+\cdots+x_{n-3}^2}$, $z=(x_{n-2},x_{n-1},x_n)$ with $x=(x_1,\cdots,x_n)\in\Omega$. Moreover, the singularity location
$$(\xi_r(t),\xi_z(t))\sim (\sqrt{2(n-4)(T-t)},z_0)~\mbox{ as }~t\nearrow T,$$
for some fixed $z_0$, and the blow-up rate
$$\la(t)\sim \frac{T-t}{|\log(T-t)|^2}~\mbox{ as }~t\nearrow T.$$
This is a completely new phenomenon in the parabolic setting.
\end{abstract}
\maketitle

{
  \hypersetup{linkcolor=black}
  \tableofcontents
}


\section{Introduction}

\medskip

Semilinear heat equation
\begin{equation}\label{eqn-semilinear}
\begin{cases}
u_t=\Delta u+u^{p},~&\mbox{ in } \Omega\times (0,T),\\
u(x,t)=u|_{\partial\Omega},~&\mbox{ on } \partial\Omega\times (0,T),\\
u(x,0)=u_0(x),~&\mbox{ in } \Omega,
\end{cases}
\end{equation}
with $p>1$ has been widely studied since Fujita's celebrated work \cite{Fujita66}. Here $\Omega$ is the entire space $\R^n$ or a smooth bounded domain in $\R^n$ and $0<T\leq+\infty$. Many literatures have been devoted to studying this problem about the singularity formation, especially the blow-up rates, profiles and sets. See, for instance, \cite{Giga85CPAM,Giga86HMJ,Giga86CMP,Giga87Indiana,Giga89CPAM,Giga04Indiana,Merle04CPAM,Suzuki08Indiana,Merle09JFA,Merle11JFA,Collottype1,Collot17strong,Souplet2018optimal} and references therein. Also, for a comprehensive survey in the literature up to 2019, we refer the readers to the book of Quittner and Souplet \cite{Souplet07book}.

\medskip

For the finite time blow-up, it is said to be of
\medskip
\begin{itemize}
\item {\em type I~} if
$${\lim \sup}_{t\to T}(T-t)^{\frac{1}{p-1}}\|u(\cdot,t)\|_{\infty}<+\infty$$
\item {\em type II~} if
$${\lim \sup}_{t\to T}(T-t)^{\frac{1}{p-1}}\|u(\cdot,t)\|_{\infty}=+\infty.$$
\end{itemize}
\medskip
Type I blow-up is at most like that of the ODE $u_t=u^p$, while type II blow-up is much more difficult to detect. In particular, two different types blow-up phenomena in problem \eqref{eqn-semilinear} depend sensitively on the nonlinearity, namely the values of the exponent $p$. In \cite{Giga85CPAM}, Giga and Kohn first proved that for $1<p<\frac{n+2}{n-2}$, only type I blow-up can occur in the case of convex domain. This was generalized to radial case \cite{FHV00} and the case that the domain $\Omega$ is star-shaped \cite{cheng2013behavior}  for the energy critical case $p=\frac{n+2}{n-2}$. For the subcritical case $p<\frac{n+2}{n-2}$, multiple-point, finite time type I blow-up solution was found and its stability was further studied in \cite{Merle97Duke}. The critical exponent $p=\frac{n+2}{n-2}$ in this setting is special in various ways. For the critical case $p=\frac{n+2}{n-2}$, solutions were classified near the ground state of the energy critical heat equation in $\mathbb{R}^n$ with $n\geq 7$ in \cite{Collot17CMP}. For dimensions $ n\leq 6$ finite time blow-ups were constructed in \cite{FHV00} by matched asymptotics method.  (It is expected that no finite time blow-ups exist in dimensions $ n\geq 7$.) For the four dimensional energy critical heat equation, radial and sign-changing type II finite time blow-up solution was rigorously  constructed  in \cite{Schweyer12JFA} (See also \cite{ni4d}). In \cite{type25D}, the authors constructed type II finite time blow-up solution for the energy critical heat equation in $\mathbb{R}^5$ by the inner--outer gluing method. See also \cite{harada2019higher} for the construction of a higher speed type II blow-up in $\mathbb{R}^5$. In dimensions $n=3$ or $6$, recently finite time blow-ups are also found in \cite{type23D,harada6D}.

\medskip

In the aspect of infinite time blow-up, Cort\'azar, del Pino and Musso \cite{Green16JEMS} constructed positive and non-radial infinite time blow-up solution for problem \eqref{eqn-semilinear} with Dirichlet boundary condition and $n\geq 5$. The solution they constructed takes the profile of sharply scaled Aubin-Talenti bubbles
\begin{equation}\label{bubbles}
U_{\la,\xi}(x)=\la^{-\frac{n-2}{2}}U\left(\frac{x-\xi}{\la}\right)=\varpi_n \left(\frac{\la}{\la^2+|x-\xi|^2}\right)^{\frac{n-2}{2}},~\varpi_n=(n(n-2))^{\frac{n-2}{4}}
\end{equation}
which solve the Yamabe problem
$$\Delta U+ U^{\frac{n+2}{n-2}}=0~\mbox{ in }~\R^n.$$
Moreover, the blow-up position for the solution is determined by the Green's function of $-\Delta$ in $\Omega$, while the role of the Green's function in bubbling phenomena for elliptic problems has been known for a long time since the pioneering works \cite{BC88CPAM,Bahri95CVPDE}. In \cite{FilaKing12}, Fila and King studied problem \eqref{eqn-semilinear} with the critical exponent $p=\frac{n+2}{n-2}$ and gave insight on the infinite time blow-up in the case of a
radially symmetric, positive initial condition with an exact power decay rate. By using formal matched
asymptotic analysis, they demonstrated that the blow-up rate is  determined by the power decay in a precise
manner. Intriguingly enough, their analysis leads them to conjecture that infinite time blow-up should
only happen in low dimensions 3 and 4, see Conjecture 1.1 in \cite{FilaKing12}. Recently, this is confirmed and rigorously proved in \cite{173D} for $n=3$. In a recent work \cite{del2018sign}, the authors constructed non-radial and sign-changing solution which blows up at infinite time, where at leading order, the solution takes the form of the sign-changing ``cell'' constructed in \cite{cell1,cell2} instead of the Aubin-Talenti bubble.

\medskip

The singularity formation for supercritical case $p>\frac{n+2}{n-2}$ is much more intricate. Herrero and Vel\'azquez \cite{herrero1992blow,Velazquez94pJL} found type II blow-up solution in the radial class for $n\geq 11$ and $p>p_{JL}(n)$. Here $p_{JL}(n)$ is the Joseph-Lundgren exponent
$$
p_{JL}(n):=\begin{cases}
\infty~&\mbox{ if }~3\leq n\leq 10,\\
1+\frac{4}{n-4-2\sqrt{n-1}}~&\mbox{ if }~n\geq11.
\end{cases}
$$
The solution locally resembles a asymptotically singular scaling of a radial and positive solution to the stationary problem
$$\Delta u+u^p=0~\mbox{ in }~\R^n.$$
See also \cite{Mizoguchi05Indiana} for the case that $\Omega$ is a ball, and \cite{Collot17APDE} for the case of general domain with the restriction that $p$ is an odd integer. For the borderline case $p=p_{JL}(n)$ and $n\geq 11$, the existence of type II blow-up was shown in \cite{Seki18JFA}. In the Matano-Merle range $\frac{n+2}{n-2}<p< p_{JL}(n)$ which complements the Herrero-Vel\'azquez range, no type II blow-up can occur in radially symmetric class in the case of a ball or in entire space under additional assumptions \cite{Merle04CPAM,Merle09JFA,Mizoguchi11JDE}. In \cite{17type2}, the authors successfully constructed non-radial type II blow-up solution to \eqref{eqn-semilinear} in the Matano-Merle range $p=\frac{n+1}{n-3}\in\left(\frac{n+2}{n-2},p_{JL}(n)\right)$. More precisely, the solution constructed in \cite{17type2} blows up along a certain curve with axial symmetry in the sense that the energy density approaches to the Dirac measure along the curve as $t\nearrow T$. See also \cite{Collot17strong} for another kind of anisotropic blow-up for the case $n\geq 12$ and $p>p_{JL}(n)$.

\medskip

Singularity formation triggered by criticality and super-criticality in many other literatures has also been widely studied, such as the Schr\"odinger map, wave equations, Yang-Mills problems, geometric flows such as harmonic map heat flows, mean curvature flows and Yamabe flows. We refer the readers for instance to \cite{CDY92JDG,Velazquez97JRAM,18type2Yamabeflow,Collot17CMP,Kenig11JEMS,King03JDE,Jendrej17JFA,Merle08Acta,Krieger09Duke,Merle13Inventiones,Raphael12wavemaps,Krieger08Inventiones,Krieger09Advance,Biernat17IMRN,Ghoul19APDE,Naito94,Grotowski01MZ}, and the references therein.

\medskip

In this paper, we are concerned with the energy supercritical heat equation with the $(n-3)$-th Sobolev exponent $p=3$
\begin{equation}\label{eqn}
\begin{cases}
u_t=\Delta u+u^{3},~&\mbox{ in } \Omega\times (0,T),\\
u(x,t)=u|_{\partial\Omega},~&\mbox{ on } \partial\Omega\times (0,T),\\
u(x,0)=u_0(x),~&\mbox{ in } \Omega,
\end{cases}
\end{equation}
where $5\leq n\leq 7$, $\Omega$ is $\R^n$ or $\Omega$ is  a smooth, bounded domain in $\R^n$ enjoying the symmetry that $\Omega$ is invariant under the orthogonal transformations
\begin{itemize}
\item $Q(x_1,\cdots,x_n)=\big(R(x_1,\cdots,x_{n-3}),x_{n-2},x_{n-1},x_n\big),$
with
$$x=(x_1,\cdots,x_n)\in\Omega,~~R\in SO(n-3),$$
where $SO(n-3)$ is the classical rotation group

\item $\pi_i(x_1,\cdots,x_i,\cdots,x_n)=(x_1,\cdots,-x_i,\cdots,x_n)$ \,with\, $i=n-2,~n-1,~n$
\end{itemize}
namely
\begin{equation}\label{domain-symmetry}
Q(\Omega)=\Omega,~\pi_i(\Omega)=\Omega~\mbox{ for }~i=n-2,~n-1,~n.
\end{equation}
In other words, $\Omega$ is a radial domain in the first $n-3$ coordinates and even in the remaining $3$ coordinates.

\medskip

The aim of this paper is to construct {\em type II finite time blow-up} solution which blows up along an {\em $(n-4)$-dimensional sphere with shrinking size $\sqrt{T-t}$}. To state the main result, we write $x=(x^*,x^{**})\in\Omega$ with $x^*\in \R^{n-3}$ and $x^{**}\in \R^{3}$, and denote $|x^{*}|=r$, $x^{**}=z$. We look for solution $u(x,t)$ with the same symmetry as $\Omega$'s
\begin{equation}\label{symmetryclass}
u(x,t)=\tilde u(r,z,t)
\end{equation}
for a function $\tilde u$ defined in $\mathcal D\times(0,T)$, where
\begin{equation}\label{def-domain}
\mathcal D:=\left\{(r,z):r=\sqrt{x_1^2+\cdots+x_{n-3}^2},z=(x_{n-2},x_{n-1},x_n)~\mbox{ such that }~x=(x_1,\cdots,x_n)\in\Omega\right\}.
\end{equation}
Our main result is stated as follows.

\medskip

\begin{theorem}\label{thm}
Let $\Omega$ be $\mathbb R^n$ or a smooth, bounded domain in $\mathbb R^n$ in the symmetry class \eqref{domain-symmetry} with $5\leq n\leq 7$. Then for $T>0$ sufficiently small, there exist initial and boundary conditions such that the solution $u(x,t)$ to problem \eqref{eqn} blows up along a shrinking sphere, with the profile of the form
$$u(x,t)=\la^{-1}(t)U\left(\frac{(r,z)-(\xi_r(t),\xi_z(t))}{\la(t)}\right)(1+o(1))~\mbox{ as }~t\nearrow T,$$
where $U$ is the standard bubble \eqref{bubbles} in $\mathbb R^4$, and for some $\kappa_*>0$, $z_0\in\mathbb R^3$,
$$\la(t)=\kappa_* \frac{T-t}{|\log(T-t)|^2}(1+o(1))~\mbox{ as }~t\nearrow T,$$
$$\xi(t)=\left(\xi_r(t),\xi_z(t)\right)=\left(\sqrt{2(n-4)(T-t)}(1+o(1)),z_0(1+o(1))\right)~\mbox{ as }~t\nearrow T.$$
\end{theorem}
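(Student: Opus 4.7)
The plan is to follow the inner--outer gluing scheme developed in \cite{17type2,type25D,type23D,harada2019higher}. Using the symmetry \eqref{symmetryclass}, we write $u(x,t)=\tilde u(r,z,t)$ on $\mathcal{D}$; in these coordinates the Laplacian reads $\Delta = \pa_r^2 + \frac{n-4}{r}\pa_r + \Delta_z$, so that near a point $(\xi_r,\xi_z)\in\mathcal D$ with $\xi_r>0$ the principal part is the Laplacian on $\R^4$, while $\frac{n-4}{r}\pa_r$ is a singular lower-order drift whose interaction with the concentrating profile will drive the sphere's motion. Since $p=3$ is the Sobolev-critical exponent in $\R^4$, the natural leading profile is the Aubin--Talenti bubble
\begin{equation*}
u_*(r,z,t) \;=\; \la(t)^{-1}\,U\!\left(\frac{(r,z)-(\xi_r(t),\xi_z(t))}{\la(t)}\right),\qquad U(y)=\frac{2\sqrt 2}{1+|y|^2},
\end{equation*}
centred along the shrinking sphere, with modulation parameters $(\la,\xi_r,\xi_z)$ to be determined.

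The next step is to derive modulation ODEs by projecting the error $E[u_*]=-\pa_tu_*+\Delta u_*+u_*^3$ onto the 5-dimensional kernel of the linearization $L\phi = \Delta_y\phi + 3U^2\phi$ at $U$, spanned by the scaling element $Z_0 = U + y\cdot\nabla U$ and the four translation elements $Z_i = \pa_{y_i}U$. The drift $\frac{n-4}{r}\pa_r$ applied to $u_*$ projects at leading order onto $\pa_rU$ with coefficient $\sim(n-4)/\xi_r$, producing
\begin{equation*}
\dot\xi_r = -\frac{n-4}{\xi_r},\qquad \dot\xi_z = o(1),
\end{equation*}
whose solutions are $\xi_r(t)\sim\sqrt{2(n-4)(T-t)}$ and $\xi_z(t)\to z_0$. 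The scaling projection, handled with a cut-off at the self-similar scale $\sqrt{T-t}$ to cope with the fact that $Z_0\notin L^2(\R^4)$, becomes a logarithmic relation between $\la,\dot\la$ and $T-t$ whose solvability fixes the rate $\la(t)=\kappa_*(T-t)/|\log(T-t)|^2(1+o(1))$ for some explicit $\kappa_*>0$.

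With the leading modulation parameters determined, I would refine $u_*$ to $U_*$ by iteratively solving linear problems that cancel the non-orthogonal part of the error, and then decompose $u = U_* + \phi_{\mathrm{in}} + \phi_{\mathrm{out}}$ with $\phi_{\mathrm{in}}$ supported at scale $\la$ around the shrinking sphere and $\phi_{\mathrm{out}}$ describing the far field. This yields a coupled inner--outer system: the outer problem is essentially the heat equation in $\Omega$ with a compactly supported forcing, inverted in weighted $L^\infty$ spaces based on the self-similar variable $(r,z)/\sqrt{T-t}$; the inner problem, written in the blow-up variables $y=((r,z)-(\xi_r,\xi_z))/\la$ and slow time $\tau=\int^t\la^{-2}$, is a small perturbation of $\pa_\tau\phi - L\phi = h$, solved under orthogonality against $Z_0,\ldots,Z_4$ in polynomially weighted norms in $|y|$.

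The main obstacle will be the inner linear theory. The four-dimensional linearization is borderline: $Z_0$ decays only like $|y|^{-2}$ and is not in $L^2$, and when the drift $\frac{n-4}{r}\pa_r$ is rewritten in inner variables it becomes $\frac{(n-4)\la}{\xi_r+\la y_1}\pa_{y_1}$, which is singular where $y_1 = -\xi_r/\la$ and must be controlled carefully against the bubble scale. Obtaining a priori estimates for the inverse of $\pa_\tau-L$ that are sharp enough to be compatible with the $|\log(T-t)|^2$ correction in $\la$ will require a mode-by-mode decomposition of $\phi_{\mathrm{in}}$ in spherical harmonics together with precise tracking of the logarithmic contributions, in the spirit of \cite{17type2,type25D}. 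Once these sharp estimates are in place, the full scheme closes by a contraction/Schauder fixed point argument in the product of the weighted spaces, simultaneously solving for $(\phi_{\mathrm{in}},\phi_{\mathrm{out}})$ and for the higher-order corrections of $(\la,\xi_r,\xi_z)$ around the leading profiles above, and the asymptotics announced in the theorem follow.
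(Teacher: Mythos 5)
Your proposal is correct and follows essentially the same route as the paper's proof: the reduction via \eqref{symmetryclass} to the $\R^4$-critical equation with drift $\frac{n-4}{r}\partial_r$, the Aubin--Talenti bubble as ansatz, the modulation equations $\dot\xi_r\approx -(n-4)/\xi_r$ (hence $\xi_r\sim\sqrt{2(n-4)(T-t)}$) and the log-corrected rate for $\la$ extracted from the scaling projection, the self-similar-scale cut-off to cope with $Z_5\notin L^2(\R^4)$, the inner--outer gluing with spherical-harmonic decomposition of the inner linearized problem, and closing by Schauder. The one place you compress is the scaling mode, which in the paper is handled by inserting an explicit non-local correction $\Psi_0$ (a Duhamel integral of $\dot\la$ against a heat-kernel profile) so that the $Z_5$-orthogonality becomes the integro-differential equation $\int_{-T}^{t-\la^2}\frac{\dot\la(s)}{t-s}\,ds\approx\text{const}$, whose approximate solution gives $\dot\la\sim -c/|\log(T-t)|^2$; your phrase ``logarithmic relation between $\la,\dot\la$ and $T-t$'' is consistent with this but glosses over the non-local structure that actually produces the announced rate.
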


\medskip

Theorem \ref{thm} exhibits a completely new blow-up phenomenon where the type II blow-up takes place along a {\it higher dimensional manifold with shrinking size}. More precisely, from the characterization of $\xi(t)=\left(\xi_r(t),\xi_z(t)\right)$, the blow-up position of the solution $u(x,t)$ we construct is a copy of $\mathbb S^{n-4}$, and the $(n-4)$-dimensional sphere shrinks with self-similar size $\sqrt{T-t}$ and asymptotically collapses to a point $(0,z_0)$ in $\Omega$ as $t\nearrow T$. A schematic depiction of the evolution of the concentration set for $n=5$ is given in Figure \ref{Fig.1} below. Roughly speaking, the shape of the solution we construct looks like a ``thin tube'' in the $(r,z)$-coordinate for $n=5$. This type of concentration set with shrinking size was first conjectured to exist in \cite{3DHMF} in the context of harmonic map heat flow, where the authors considered the case that the singularity set is a fixed circle. We will come back to this intriguing question in the setting of harmonic map heat flow in a forthcoming work. Also, in the setting of energy supercritical heat equation \eqref{eqn-semilinear} with $p=\frac{n+1}{n-3}$ (the second Sobolev exponent), type II finite time blow-up solution concentrating on a fixed circle was constructed in \cite{17type2}. In another aspect, the higher dimensional blow-ups for parabolic problems can be regarded as the parallel with bubbling phenomena in the elliptic setting, see \cite{DKW07CPAM,bubbling10JEMS,DKW11Annals} for example. It is worth mentioning that the boundary bubbling driven by the geometry of the boundary in \cite{bubbling10JEMS} was also conjectured to be true for parabolic problems in \cite{17type2}.

\medskip

In fact, there are similar phenomena as shown in Theorem \ref{thm} in other literatures. In \cite{TaoEuler}, a very similar ``neck pinch'' blow-up with self-similar size $\sqrt{T-t}$ was already found for the finite time singularity formation of the generalized Euler equations in dimension three (see \cite[Theorem 1.11]{TaoEuler}). A similar relation $\dot\xi_r\sim \frac{1}{\xi_r}$, which is the key to obtain novel dynamics of the shrinking concentration set in Theorem \ref{thm}, seems to be also crucially used to produce such ``neck pinch'' blow-up in \cite[Section 7]{TaoEuler}.  In the context of the finite time blow-up for nonlinear Schr\"odinger equations (NLS), the shrinking (or collapsing) concentration sets have been found in \cite{collapsingring,contractingsphere} (see also \cite{WangRing} for the numerical simulations). Note that the solutions constructed in \cite{collapsingring,contractingsphere} are essentially radially symmetric and the shrinking concentration sets there are not of self-similar size $\sqrt{T-t}$, while the solution constructed in Theorem \ref{thm} is cylindrically symmetric and $\xi_r(t)\sim \sqrt{T-t}$.
At the level of NLS with cylindrical symmetry, the standing ring (with fixed radius) blow-ups have been investigated in \cite{HolmerRing,ZwiersRing} for example.

\medskip

\begin{figure}[htbp]
\centering
\includegraphics[width=140mm]{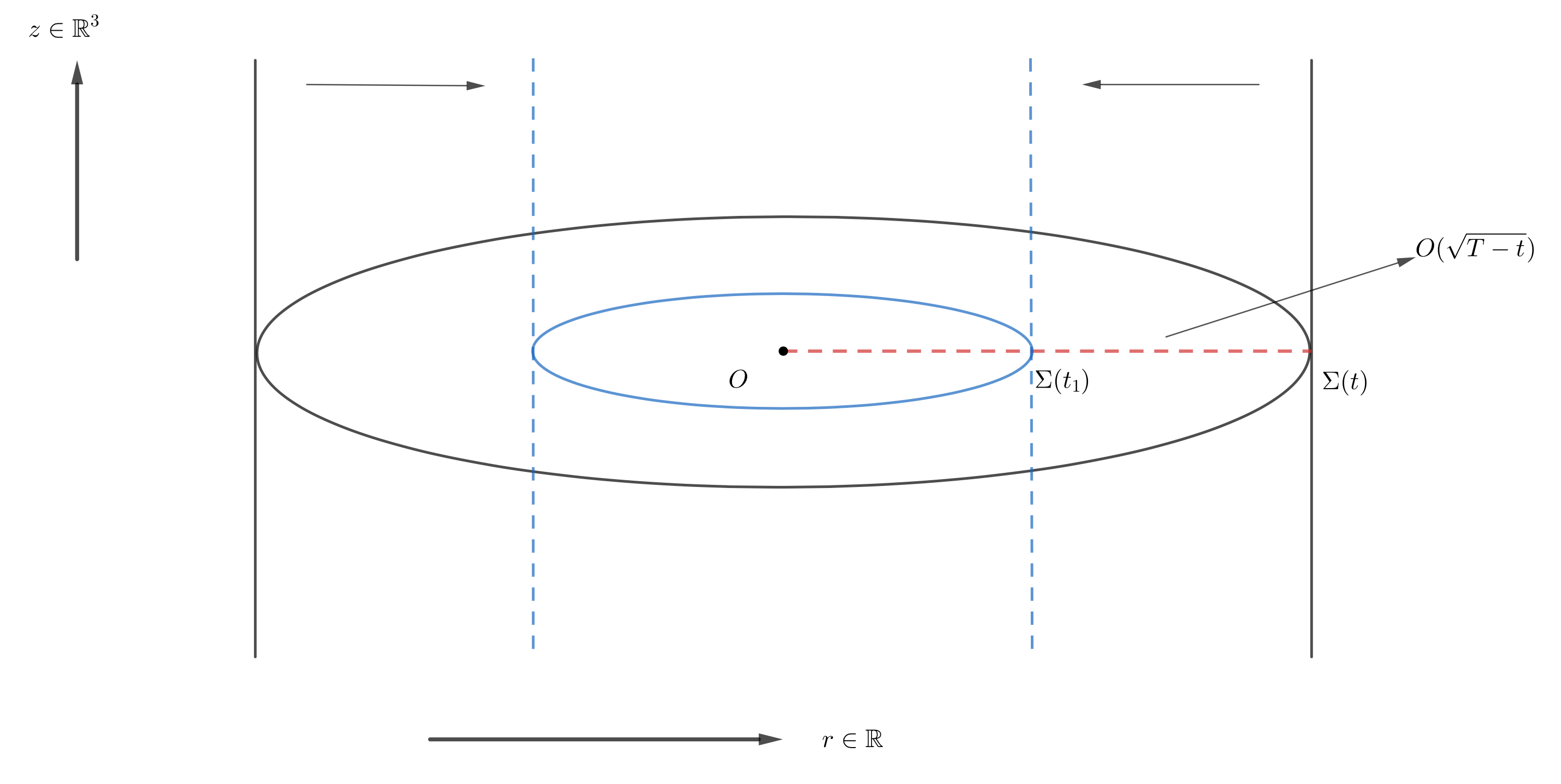}
\caption{A schematic depiction of the concentration set in Theorem \ref{thm} for $n=5$. In this case, the concentration set is a {\em shrinking circle}. The solution constructed in Theorem \ref{thm} blows up along the  shrinking circle $\Sigma(t)$. For $0<t<t_1<T$, the concentration set evolves from $\Sigma(t)$ to $\Sigma(t_1)$ (shown here as the blue curve) as $t\to t_1$, and finally {\em collapses} to a point as $t\to T$.}
\label{Fig.1}
\end{figure}

The proof of Theorem \ref{thm} is based on the {\it inner--outer gluing method}, which has been a very powerful tool in constructing solutions in many elliptic problems, see for instance \cite{DKW07CPAM,DKW11Annals,bubbling10JEMS,manifold15JMPA} and the references therein. Also, this method has been successfully applied to various parabolic problems recently, such as the infinite time and finite time blow-ups for energy critical heat equations \cite{Green16JEMS,173D,del2018sign,17type2,type25D,tower7D}, singularity formation for the harmonic map heat flows \cite{17HMF,sire2019singularity}, type II ancient solution for the Yamabe flow \cite{18type2Yamabeflow}, the vortex dynamics in Euler flows \cite{18Euler}, and the finite time singularity for the nematic liquid crystal flow \cite{LCF2D}. In the fractional setting, a new fractional gluing method was developed in
\cite{17halfHMF} to construct infinite time blow-up for the half-harmonic map flow, and also in \cite{18fractionalcritical} for the constructions of infinite time blow-up for the fractional critical heat equation. We refer the interested readers to a survey \cite{delPinosurvey} for more results in parabolic settings.

\medskip

In the symmetry class \eqref{symmetryclass}, problem \eqref{eqn} is reduced to solving
\begin{equation}\label{eqn''}
\begin{cases}
\tilde u_t=\Delta_{(r,z)} \tilde u+\frac{n-4}{r}\tilde u_r+\tilde u^{3},~&\mbox{ in } \mathcal D\times (0,T)\\
\tilde u_r=0,~&\mbox{ on } (\mathcal D\cap\{r=0\})\times (0,T)\\
\tilde u=\tilde u|_{\partial\Omega},~&\mbox{ on } (\partial \mathcal D\backslash \{r=0\})\times (0,T)\\
\tilde u(\cdot,0)=\tilde u_0,~&\mbox{ in } \mathcal D\\
\end{cases}
\end{equation}
where $\mathcal D$ is defined in \eqref{def-domain} and $\Delta_{(r,z)}:=\partial_r^2+\Delta_z$ is the Laplacian in $\R^4$. Note that $p=3$ is the critical exponent in $\R^4$. So problem \eqref{eqn''} can be viewed as the energy critical problem in $\R^4$ with a perturbation $\frac{n-4}{r}\tilde u_r$. It turns out that the term $\frac{n-4}{r}\tilde u_r$ plays a crucial role in producing the shrinking concentration set. The first step of the construction consists of choosing a suitable approximate solution with sufficiently small error and then decomposing the original problem into inner and outer problems, where the inner problem is essentially the linearization around the bubble supported in a well-chosen ball. The inner and outer problems will be solved by developing linear theories for the associated linear problems and the Schauder fixed point theorem. Since the desired blow-up solution is located exactly at the self-similar regime $r=O\left(\sqrt{T-t}\right)$, the estimates for the linear outer problem are much more delicate than the case that the concentration set is fixed ($r=O(1)$). On the other hand, it is a difficult issue to control the term of type $\frac{n-4}{r}\tilde u_r$ due to the shrinking effect $r\sim\sqrt{T-t}\to 0$ as $t\nearrow T$. As a consequence, in order to carry out the fixed point argument under suitable topology, the estimates for the inner and outer problems should be very refined. For the outer problem, we achieve this by using the Duhamel's formula in $\R^n$, while for the inner problem, motivated by \cite{Green16JEMS,17HMF}, we improve the linear theory by decomposing the linearized problem into three different modes: scaling mode, translation mode and higher modes. The most difficult modes are scaling mode and translation mode. To find inner solution with proper space-time decay, we shall carry out a new inner--outer gluing scheme for the scaling mode, while the estimates for the translation mode are obtained by the blow-up argument. As mentioned, the inner problem is supported in a ball with radius $R=R(t)$ in terms of the translated and scaled variable. To ensure the inner--outer gluing to be carried out, the radius $R(t)$ and the parameters in the norms will be very carefully chosen, which results in the dimension restriction $5\leq n\leq 7$. This seems to be reasonable since the singularity takes place exactly at the critical level $r\sim\sqrt{T-t}$, and from our computations, the estimates for the key coupling terms in the inner--outer gluing system get worse as $n$ increases. In other words, upper bound of the dimension $n$ should be present in this setting to ensure the implementation of the inner--outer gluing procedure. We believe that the blow-up on higher dimensional shrinking sphere with $n\geq 8$ should exist, presumably with more complicated blow-up rate.

\medskip

Note that the supercritical problem \eqref{eqn} is in the Matano--Merle range $\frac{n+2}{n-2}<p<p_{JL}$. It was proved that no type II blow-up is present for {\em radial} solutions in the case of a ball or in entire space under additional assumptions \cite{Merle04CPAM,Merle09JFA,Mizoguchi11JDE}, while the solution constructed in Theorem \ref{thm} is certainly not radially symmetric as the translation mode also plays a crucial role in the construction resulting in the novel dynamics of the concentration set. In this aspect, the results in Theorem \ref{thm} share a similar flavor as that of \cite{17type2}.

\medskip

We close the introduction by mentioning that
for problem \eqref{eqn}, type II finite time blow-up on a sphere with fixed instead of shrinking size also exists, and the upper bound for the dimension $n$ for this phenomenon to exist should be greater than $8$. We will not elaborate on this problem since it is a rather direct consequence of our construction here.

\medskip

Since the proof of Theorem \ref{thm} is quite long and technical, we shall give a brief sketch to illustrate the ideas in next section.

\medskip


\section{Sketch of the proof}

\medskip

In this section, we shall sketch the major steps in our construction.

\medskip

\noindent {\bf Step 1. Approximate solution}

\medskip

We define the error operator
$$\mathcal S (u):=-u_t+\Delta_{(r,z)} u+\frac{n-4}{r}u_r+u^3.$$
Then looking for a solution to \eqref{eqn} in the symmetry class \eqref{symmetryclass} is equivalent to finding $u$ such that
$$\mathcal S (u)=0,$$
where we drop the tilde for simplicity. The bubble
\begin{equation*}
U(y)=\frac{\alpha_{0}}{1+|y|^2}
\end{equation*}
solves the Yamabe problem
\begin{equation*}
\Delta_y U+U^3=0~\mbox{ in }~\R^4,
\end{equation*}
where $\alpha_0=2\sqrt{2}.$ Our first approximation is chosen as
$$U^*=\eta_*U_{\la(t),\xi(t)}~\mbox{ with }~U_{\la(t),\xi(t)}=\la^{-1}(t)U\left(\frac{(r,z)-\xi(t)}{\la(t)}\right)~\mbox{ and }~\eta_*:=\eta\left(\frac{|(r,z)-\xi(t)|}{\delta \sqrt{T-t}}\right)$$
for $\delta>0$ fixed small, where $\eta$ is the standard cut-off such that $\eta(s)=1$ for $s<1$ and $\eta(s)=0$ for $s>2$. The purpose of the extra cut-off $\eta_*$ in front of the bubble is to make the desired solution more concentrated near the shrinking sphere, and it also enables us to better control the terms of type $\frac{n-4}{r}\partial_r u$ as $r\to 0^+$. Next, the error of $U^*$ is
\begin{equation*}
\begin{aligned}
\mathcal S(U^*)=&~ \eta_*\left[\la^{-2}(t)\dot{\la}(t)\left(-\frac{\alpha_0}{1+|y|^2}+\frac{2\alpha_0}{(1+|y|^2)^2}\right)+\la^{-2}(t)\nabla_y U(y)\cdot \dot{\xi}(t)\right]\\
&~+U_{\la,\xi}(\Delta_{(r,z)}\eta_*-\partial_t \eta_*)+2\nabla \eta_*\cdot \nabla U_{\la,\xi}+(\eta_*^3 -\eta_* )U_{\la,\xi}^3 +\frac{n-4}r \partial_r(\eta_* U_{\la,\xi}),
\end{aligned}
\end{equation*}
where $\nabla:=(\partial_r, \nabla_z)$ and $y=\frac{(r,z)-\xi(t)}{\la(t)}$. Notice that the slow decaying error in $\mathcal S(U^*)$ is
$$\mathcal E_0=-\frac{\alpha_0\dot\la(t)}{\la^2(t)+\rho^2}\approx -\frac{\alpha_0\dot \la(t)}{\rho^2},$$
where $\rho=\la|y|.$ To reduce the size of $\mathcal E_0$, a correction $\Psi_0$ solving
$$\partial_t u=\Delta_{(r,z)} u+\mathcal E_0~\mbox{ in }~\R^4\times(0,T)$$
is added. Moreover, $\Psi_0$ takes the integral form as \eqref{def-Psi0}. So we take the second approximation as
$$u^*=U^*+\eta_*\Psi_0$$
and its error is computed in \eqref{error22}. To further reduce the size of the error $\mathcal S(u^*)$ to carry out the inner--outer gluing scheme, we add one more correction $\Psi_1$ as in \eqref{def-Psi1}. Then our corrected approximate solution becomes
$$u_c^*=U^*+\eta_*\Psi_0+\Psi_1.$$

\medskip

\noindent {\bf Step 2. The inner--outer gluing scheme}

\medskip

Now, we write
$$u=u_c^*+\mathtt P,$$
and our aim is to find the perturbation $\mathtt P$ satisfying
$$\mathcal S(u_c^*+\mathtt P)=0.$$
We decompose $\mathtt P$ into inner and outer profiles
$$\mathtt P=\mathtt P_{in}+\mathtt P_{out}$$
with
$$\mathtt P_{in}=\la^{-1}(t)\eta_R \phi(y,t)~\mbox{ and }~\mathtt P_{out}=\psi+Z^*,$$
where $Z^*$ satisfies \eqref{eqn-Z^*} and
$$\eta_R:=\eta\left(\frac{|(r,z)-\xi(t)|}{\la(t)R(t)}\right).$$
Write $y=\frac{(r,z)-\xi(t)}{\la(t)}$, $\xi(t)=(\xi_r(t),\xi_z(t))$ and $\Psi^*=\psi+Z^*$. Then a desired solution $u=u_c^*+\mathtt P$ is found if $\phi$ and $\psi$ solve essentially the inner--outer gluing system
\begin{equation}\label{sketch-inout}
\left\{
\begin{aligned}
&\la^2 \phi_t = \Delta_y \phi +3U^2(y)\phi+\mathcal H(\phi,\psi,\la,\xi)~~\mbox{ in }\mathcal D_{2R}\times (0,T)\\
&\psi_t =\Delta_{(r,z)} \psi+\frac{n-4}{r}\partial_r \psi  + \mathcal G(\phi,\psi,\la,\xi)~~\mbox{ in }\mathcal D\times (0,T)\\
\end{aligned}
\right.
\end{equation}
where $\mathcal H,~\mathcal G,~\mathcal D$ are defined in \eqref{def-mH}, \eqref{def-mG}, \eqref{def-domain}, and
$$\mathcal D_{2R}=\left\{(r,z)\in \R^4:|(r,z)-(\xi_r,\xi_z)|\leq 2\la(t) R(t)\right\}.$$
The inner-outer gluing system \eqref{sketch-inout} shall be solved by developing linear theories for the associated linear problems and then applying the fixed point argument. See Section \ref{sec-inneroutergluingscheme} for more detailed strategies for solving this system.

\medskip

\noindent {\bf Step 3. Choosing parameter functions $\la(t)$ and $\xi(t)$ at leading order}

\medskip

For the inner probelm
$$\la^2 \phi_t = \Delta_y \phi +3U^2(y)\phi+\mathcal H(\phi,\psi,\la,\xi)~~\mbox{ in }\mathcal D_{2R}\times (0,T),$$
we notice that the parabolic operator $-\la^2\partial_t+L_0$ is certainly not invertible since all the time independent elements in the 5 dimensional kernel of $L_0$ (see \eqref{kernels}) also belong to the kernel of $-\la^2\partial_t+L_0$. Here $L_0=\Delta_y +3U^2$ is the linearized operator around the bubble. Therefore, certain orthogonality conditions are expected to guarantee the existence of inner solution $\phi$ with suitable space-time decay. By the linear theory which will be developed in Section \ref{sec-linearinner}, approximately, orthogonality conditions
$$
\int_{\R^4} \mathcal H(\phi,\psi,\la,\xi) Z_j (y) dy\approx 0~~\mbox{ for all } j=1,\cdots,5,~  t\in (0,T)
$$
should be satisfied, where $Z_j$ are kernel functions defined in \eqref{kernels}. By singling out the main term $\mathcal H_*[\la,\xi,\Psi^*]$ in $\mathcal H(\phi,\psi,\la,\xi)$,
 it is thus reasonable to find the leading orders $\la_*(t),~\xi_*(t)$ of the scaling and translation parameters $\la(t),~\xi(t)$ such that
$$
\int_{\R^4} \mathcal H_*(\la_*,\xi_*,\Psi^*) Z_j (y) dy= 0~~\mbox{ for all } j=1,\cdots,5,~  t\in (0,T).
$$
The orthogonality conditions above imply that
\begin{equation*}
\left\{
\begin{aligned}
&\dot{\xi}_r(t) +\frac{n-4}{\xi_r(t)}=o(1)\\
&\dot\xi_{z_j}(t)=o(1),\quad j=2,3,4,\\
&\int_{-T}^{t-\la^2(t)}\frac{\dot{\la}(s)}{t-s}ds=-c+o(1)\\
\end{aligned}
\right.
\end{equation*}
where $o(1)\to 0$ as $t\nearrow T$ and $c>0$. Note that the operator $\frac{n-4}{r}\partial_r$ resulting from the symmetry \eqref{symmetryclass} plays a crucial role in generating the novel dynamics for $\xi_r(t)$, and the integro-differential equation for $\la(t)$ is due to the non-local correction $\Psi_0$. After some effort, the integral equation can be approximated by
$$\dot{\la}(t)=-\frac{c}{|\log(T-t)|^2}.$$
Therefore, to make $\mathcal H$ as orthogonal as possible to the kernel functions $Z_j$ with $j=1,\cdots,5$, good choices for the leading orders $\la_*(t)$ and $\xi_*(t)=(\xi_{r,*}(t),\xi_{z,*}(t))$ are
\begin{equation*}
\left\{
\begin{aligned}
&\xi_{r,*}(t)=\sqrt{2(n-4)(T-t)}\\
&\xi_{z,*}(t)=z_0\\
&\la_*(t)=\frac{|\log T|(T-t)}{|\log(T-t)|^2}\\
\end{aligned}
\right.
\end{equation*}
where $z_0$ is a given point in $\R^3$. See Section \ref{sec-laxi} for detailed derivation of $\la_*(t)$ and $\xi_*(t)$. The remainders for $\la(t)$ and $\xi(t)$ will be completely solved in Section \ref{sec-innerouter}.

\medskip

\noindent {\bf Step 4. Linear theory for the outer problem}

\medskip

To solve the outer problem, we shall develop a linear theory for the associated linear outer problem in Section \ref{sec-linearouter}.

Since the desired blow-up solution concentrates on an $(n-4)$-dimensional sphere with shrinking size $\sqrt{T-t}$, suitable estimates for the outer solution $\psi$ are affected by the shrinking effect and thus very delicate to find. To achieve this, we find solutions in the symmetry class \eqref{symmetryclass} by using the Duhamel's formula in $\R^n$. We consider the model linear problem
\begin{equation*}
\begin{cases}
\psi_t=\Delta_{(r,z)} \psi+\frac{n-4}{r}\partial_r \psi +\mathbf {f}_{\rm out},~&\mbox{ in }\mathcal D\times (0,T)\\
\psi=0,~&\mbox{ on }(\partial \mathcal D\backslash \{r=0\})\times (0,T)\\
\psi_r=0,~&\mbox{ on } (\mathcal D\cap\{r=0\})\times (0,T)\\
\psi(r,z,0)=0,~&\mbox{ in }\mathcal D\\
\end{cases}
\end{equation*}
where the non-homogeneous term $\mathbf f_{{\rm out}}$ is assumed to be bounded with respect to the weights appearing in $\mathcal G$ defined by \eqref{def-mG} in the outer problem. By using the Duhamel's formula, we obtain the following estimate
$$\|\psi\|_*\lesssim \|\mathbf {f}_{\rm out}\|_{**},$$
where the weighted norms $\|\cdot\|_*$ and $\|\cdot\|_{**}$ are defined in \eqref{def-norm*} and \eqref{def-norm**}. The proofs of these technical estimates will be postponed to the Appendix.

\medskip

\noindent {\bf Step 5. Linear theory for the inner problem}

\medskip

We then want to develop a linear theory for the associated linear inner problem.

Our strategy is to decompose the inner problem into three different Fourier modes
\medskip
\begin{itemize}
\item Scaling mode: $\la^2\phi^0_{t}=\Delta_y\phi^0 + 3U^{2}(y)\phi^0 + h^0(y,t)+c^0(t)Z_5(y)~\mbox{ in }~\mathcal D_{2R}\times(0,T)$

\medskip

\item Translation mode: $\la^2\phi^1_{t}=\Delta_y\phi^1 + 3U^{2}(y)\phi^1 + h^1(y,t)+\sum_{\ell=1}^4 c^{\ell}(t)Z_{\ell}(y)~\mbox{ in }~\mathcal D_{2R}\times(0,T)$

\medskip

\item Higher modes: $\la^2\phi^{\perp}_{t}=\Delta_y\phi^{\perp} + 3U^{2}(y)\phi^{\perp} + h^{\perp}(y,t)~\mbox{ in }~\mathcal D_{2R}\times(0,T)$
\end{itemize}
\medskip
and we shall construct inner solution in each mode. For the scaling mode, we further decompose $\phi^0$ into inner and outer profiles
$$\phi^0=\phi^0_{out}+\eta_{R_*}\phi^0_{in},$$
where $\eta_{R_*}:=\eta\left(\frac{|y|}{R_*}\right)$ with the standard cut-off $\eta$ defined in \eqref{def-cutoff} and $R_*=R^{\sigma}~\mbox{ for }~\sigma\in(0,1).$ As mentioned, due to the shrinking effect of the concentration set, the estimates required for the inner--outer gluing to work should be very refined. This is the reason why we shall carry out a new inner--outer gluing scheme at scaling mode. As a result, the estimates we get are only deteriorated inside the inner ball, and adjusting $\sigma$ enables us to better control the estimates. On the other hand, we observe that the kernel functions $Z_{\ell}(y)$ ($\ell=1,\cdots,4$) corresponding to the translation mode are of fast decay
$$Z^{\ell}(y)\sim\frac{1}{|y|^3}~\mbox{ as }~|y|\to+\infty,$$
which enables us to get the estimates for $\phi^1$ by the blow-up argument inspired by \cite{17HMF}. Here we need a technical restriction that $a_1\in(1,2)$ to ensure the integrability in the argument, where $a_1$ is one of the parameters in the $\|\cdot\|_{\nu_1,a_1}$-norm measuring the translation mode. See Section \ref{sec-linearinner} for more details.

\medskip

\noindent {\bf Step 6. Solving the inner--outer gluing system}

\medskip

Our aim is to solve the inner--outer gluing system \eqref{sketch-inout} by applying the linear theories developed for inner and outer problems and also the Schauder fixed point theorem. This is the context of Section \ref{sec-innerouter}.

For the outer problem, we estimate $\|\mathcal G\|_{**}$ where the $\|\cdot\|_{**}$-norm is the norm we find in the linear theory for the outer problem in Section \ref{sec-linearouter}. For the inner problem, we estimate $\mathcal H$ in the norms we find for different modes in the linear theory in Section \ref{sec-linearinner}, namely, $\|\mathcal H^0\|_{\nu,2+a},~\|\mathcal H^1\|_{\nu_1,2+a_1},~\|\mathcal H^{\perp}\|_{\nu,2+a}$. To gain smallness in the contraction, restrictions for all the parameters in the norms are required, from which we obtain the dimension restriction $5\leq n\leq 7$. We then need to adjust $\la(t)$ and $\xi(t)$ such that the orthogonality conditions hold. The reduced equation for $\xi(t)$ is direct to solve, while the reduced equation of $\la(t)$ turns out to be an integro-differential equation due to the non-local correction $\Psi_0$, which shall be solved by the similar argument as in \cite[Section 8]{17HMF}. Finally, using the Schauder fixed point theorem, we prove the existence of desired blow-up solution.

\medskip


\medskip

\section{Approximate solutions and error estimates}

\medskip

In the symmetry class \eqref{symmetryclass}, problem \eqref{eqn} becomes
$$u_t=u_{rr}+\frac{n-4}{r}u_r+\Delta_{z} u+u^3,$$
where $(r,z) \in \mathcal D$ with $\mathcal D$ defined in \eqref{def-domain}. Define the error operator
\begin{equation*}
\mathcal S (u):=-u_t+\Delta_{(r,z)} u+\frac{n-4}{r}u_r+u^3,
\end{equation*}
where $\Delta_{(r,z)}:=\partial_r^2+\Delta_z$ is the Laplacian in $\R^4$. Our first approximate solution is based on the Aubin--Talenti bubble (see \cite{Aubin76,Talenti76})
\begin{equation}\label{def-U}
U(y)=\frac{\alpha_{0}}{1+|y|^2}
\end{equation}
which solves the Yamabe problem
\begin{equation*}
\Delta_y U+U^3=0~\mbox{ in }~\R^4.
\end{equation*}
Here $\alpha_0=2\sqrt{2}.$ It is well-known that the linearized operator around the bubble
\begin{equation}\label{def-L0}
L_0(\phi):=\Delta \phi+3U^2\phi
\end{equation}
is non-degenerate in the sense that all bounded solutions to $L_0(\phi)=0$ are the linear combination of
\begin{equation}\label{kernels}
Z_i(y):=\partial_{y_i} U(y),~~i=1,2,3,4,~Z_5(y):=U(y)+\nabla U(y)\cdot y.
\end{equation}

\subsection{First approximate solution}

We define
\begin{equation*}
U_{\la(t),\xi(t)}(r,z)=\la^{-1}(t)U\left(\frac{(r,z)-\xi(t)}{\la(t)}\right),
\end{equation*}
where
$$\xi(t)=(\xi_r(t),\xi_z(t))$$
with
$$\xi_r(t)= \xi_{r,*}(t)+\xi_{r,1}(t),~~\xi_{r,1}(t)=o(\xi_{r,*}(t)),$$
$$\xi_z(t)=\xi_{z,*}(t)+\xi_{z,1}(t),~~|\xi_{z,1}(t)|=o(|\xi_{z,*}(t)|).$$
In the sequel, we denote
$$y=\frac{(r,z)-\xi(t)}{\la(t)}.$$
Now we choose the first approximate solution as
\begin{equation*}
U^*=\eta_*U_{\la(t),\xi(t)},
\end{equation*}
where the cut-off function $\eta_*$ is defined by
\begin{equation}\label{def-eta*}
\eta_*:=\eta\left(\frac{|(r,z)-\xi(t)|}{\delta \sqrt{T-t}}\right)
\end{equation}
with the positive constant $\delta$ fixed sufficiently small. Here the smooth cut-off function $\eta$ is defined by
\begin{equation}\label{def-cutoff}
\eta(s)=\begin{cases}
1,~&s<1,\\
0,~&s>2.\\
\end{cases}
\end{equation}
Then the first error of $U^*$ is
\begin{equation}\label{firsterror}
\begin{aligned}
\mathcal S(U^*)=&~-\eta_*\partial_t U_{\la,\xi}+U_{\la,\xi}(\Delta_{(r,z)}\eta_*-\partial_t \eta_*)+2\nabla \eta_*\cdot \nabla U_{\la,\xi}+(\eta_*^3 -\eta_* )U_{\la,\xi}^3 +\frac{n-4}{r} \partial_r(\eta_* U_{\la,\xi})\\
=&~\eta_*\left[\la^{-2}(t)\dot{\la}(t)Z_5(y)+\la^{-2}(t)\nabla U(y)\cdot \dot{\xi}(t)\right]+U_{\la,\xi}(\Delta_{(r,z)}\eta_*-\partial_t \eta_*)+2\nabla \eta_*\cdot \nabla U_{\la,\xi}\\
&~+(\eta_*^3 -\eta_* )U_{\la,\xi}^3 +\frac{n-4}r \partial_r(\eta_* U_{\la,\xi})\\
=&~ \eta_*\left[\la^{-2}(t)\dot{\la}(t)\left(-\frac{\alpha_0}{1+|y|^2}+\frac{2\alpha_0}{(1+|y|^2)^2}\right)+\la^{-2}(t)\nabla U(y)\cdot \dot{\xi}(t)\right]\\
&~+U_{\la,\xi}(\Delta_{(r,z)}\eta_*-\partial_t \eta_*)+2\nabla \eta_*\cdot \nabla U_{\la,\xi}+(\eta_*^3 -\eta_* )U_{\la,\xi}^3 +\frac{n-4}r \partial_r(\eta_* U_{\la,\xi}),
\end{aligned}
\end{equation}
where $\nabla:=(\partial_r, \nabla_z)$.

\subsection{Corrected approximate solution}

Observe that the slow decaying error in \eqref{firsterror} is
\begin{equation}\label{def-mE0}
\mathcal E_0=-\frac{\alpha_0\dot\la(t)}{\la^2(t)+\rho^2}\approx -\frac{\alpha_0\dot \la(t)}{\rho^2},
\end{equation}
where $\rho=|(r,z)-\xi(t)|.$
In order to reduce the size of the first error, we shall choose $\Psi^0$ to be an approximate solution of
$$\partial_t u=\Delta_{(r,z)} u+\mathcal E_0~\mbox{ in }~\R^4\times(0,T).$$
To achieve this, we consider
$$\partial_t \psi^0=\psi^0_{\rho\rho}+\frac{3}{\rho}\psi^0_{\rho}-\frac{\alpha_0\dot \la(t)}{\rho^2}.$$
We perform the change of variable $\phi_0=\rho \psi^0$ and get
\begin{equation}\label{correctHMF}
\partial_t \phi_0=(\phi_0)_{\rho\rho}+\frac{1}{\rho}(\phi_0)_{\rho}-\frac{1}{\rho^2}\phi_0-\frac{\alpha_0\dot \la(t)}{\rho^2}.
\end{equation}
From the same computations as in \cite[Section 4]{17HMF}, an explicit solution to problem \eqref{correctHMF} is given by the Duhamel's formula
$$\phi_0=-\alpha_0\rho \int_{-T}^t \dot\la(s) k(\rho,t-s) ds$$
with
\begin{equation}\label{def-k}
k(\rho,t):=\frac{1-e^{-\frac{\rho^2}{4t}}}{\rho^2}.
\end{equation}
Therefore, by $\psi^0=\rho^{-1}\phi_0$, we get
$$\psi^0=-\alpha_0 \int_{-T}^t \dot\la(s) k(\rho,t-s) ds.$$
We regularize the above $\psi^0$ and choose a good approximation $\Psi_0$ to be
\begin{equation}\begin{aligned}\label{def-Psi0}
\Psi_0(r,z,t)=&~\Psi_0(|(r,z)-\xi(t)|,t)=\Psi_0(\rho,t)\\=&~-\alpha_0\int_{-T}^t \dot\la(s) k(\zeta(\rho,t),t-s) ds,\end{aligned}
\end{equation}
where
\begin{equation}\label{def-zeta}
\zeta(\rho,t)=\sqrt{\rho^2+\la^2(t)}.
\end{equation}

Now we compute the new error produced by  $\Psi_0$ and get
\begin{equation*}
\begin{aligned}
&~\partial_t \Psi_0-\Delta_{(r,z)}\Psi_0-\mathcal E_0=\partial_t \Psi_0-\Delta_{(r-\xi_r(t),z-\xi_z(t))}\Psi_0-\mathcal E_0\\=&~\partial_t \Psi_0-\partial_\rho^2\Psi_0-\frac3\rho\,\partial_\rho\Psi_0-\mathcal E_0
\\=&~\alpha_0\left[\frac{(r-\xi_r)\dot{\xi}_r+(z-\xi_z)\dot{\xi}_z-\la(t)\dot{\la}(t)}{\zeta}\right]\int_{-T}^t \dot{\la}(s) k_{\zeta}(\zeta,t-s)ds\\
&~+\alpha_0\int_{-T}^t \dot{\la}(s)\left[-k_t(\zeta,t-s)+\frac{\rho^2}{\zeta^2}\,k_{\zeta\zeta}(\zeta,t-s)+\frac3\zeta\,k_\zeta(\zeta,t-s)+\frac{\la^2(t)}{\zeta^3}\,k_\zeta(\zeta,t-s)\right]ds.
\end{aligned}
\end{equation*}
Observe from \eqref{def-k} that $k(\zeta,t)$ satisfies
$$-k_t+k_{\zeta\zeta}+\frac3\zeta\,k_\zeta=0.$$
Therefore, we obtain
\begin{equation}\label{def-mR}
\begin{aligned}
\partial_t \Psi_0-\Delta_{(r,z)}\Psi_0-\mathcal E_0=&~\alpha_0\left[\frac{(r-\xi_r)\dot{\xi}_r+(z-\xi_z)\dot{\xi}_z-\la(t)\dot{\la}(t)}{\zeta}\right]\int_{-T}^t \dot{\la}(s) k_{\zeta}(\zeta,t-s)ds\\
&~+\alpha_0\int_{-T}^t \dot{\la}(s)\left[-\frac{\la^2(t)}{\zeta^2}\,k_{\zeta\zeta}(\zeta,t-s)+\frac{\la^2(t)}{\zeta^3}\,k_\zeta(\zeta,t-s)\right]ds\\
=&~\alpha_0\left[\frac{(r-\xi_r)\dot{\xi}_r+(z-\xi_z)\dot{\xi}_z-\la(t)\dot{\la}(t)}{\la(t)(1+|y|^2)^{1/2}}\right]\int_{-T}^t \dot{\la}(s) k_{\zeta}(\zeta,t-s)ds\\
&~+\frac{\alpha_0}{\la(t)(1+|y|^2)^{3/2}}\int_{-T}^t \dot{\la}(s)\left[-\zeta k_{\zeta\zeta}(\zeta,t-s)+k_\zeta(\zeta,t-s)\right]ds\\
:=&~\mathcal R[\la].
\end{aligned}
\end{equation}

Now we choose the corrected approximation as
$$u^*=U^*+\eta_*\Psi_0$$
and its error is given by
\begin{equation}\label{error22}
\begin{aligned}
\mathcal S(u^*)=&~\mathcal S(U^*)+\eta_*(\Delta_{(r,z)} \Psi_0-\partial_t \Psi_0)-\Psi_0\partial_t \eta_*+\Psi_0\Delta_{(r,z)}\eta_*+2\nabla \eta_*\cdot\nabla \Psi_0\\
&~+\frac{n-4}{r}\partial_r (\eta_*\Psi_0)+(U^*+\eta_*\Psi_0)^3-(U^*)^3\\
=&~\mathcal S(U^*)-\eta_*(\mathcal E_0+\mathcal R[\la])-\Psi_0\partial_t \eta_*+\Psi_0\Delta_{(r,z)}\eta_*+2\nabla \eta_*\cdot\nabla \Psi_0 +\frac{n-4}{r}\partial_r (\eta_*\Psi_0)\\
&~+(U^*+\eta_*\Psi_0)^3-(U^*)^3\\
=&~ \eta_*\left[\frac{2\alpha_0\la^{-2}(t)\dot{\la}(t)}{(1+|y|^2)^2}+\la^{-2}(t)\nabla U(y)\cdot \dot{\xi}(t)\right]-\eta_*\mathcal R[\la]+U_{\la,\xi}(\Delta_{(r,z)}\eta_*-\partial_t \eta_*)\\
&~+2\nabla \eta_*\cdot \nabla U_{\la,\xi}+(\eta_*^3 -\eta_* )U_{\la,\xi}^3 +\frac{n-4}r \partial_r(\eta_* U_{\la,\xi})-\Psi_0\partial_t \eta_*+\Psi_0\Delta_{(r,z)}\eta_*\\
&~+2\nabla \eta_*\cdot\nabla \Psi_0 +\frac{n-4}{r}\partial_r (\eta_*\Psi_0)+(U^*+\eta_*\Psi_0)^3-(U^*)^3\\
=&~\mathcal K[\la,\xi]+U_{\la,\xi}(\Delta_{(r,z)}\eta_*-\partial_t \eta_*)+2\nabla \eta_*\cdot \nabla U_{\la,\xi}+(\eta_*^3 -\eta_* )U_{\la,\xi}^3 +U_{\la,\xi}\frac{n-4}r \partial_r \eta_*\\
&~ -\Psi_0\partial_t \eta_*+\Psi_0\Delta_{(r,z)}\eta_*+2\nabla \eta_*\cdot\nabla \Psi_0 +\frac{n-4}{r}\partial_r (\eta_*\Psi_0)+(U^*+\eta_*\Psi_0)^3-(U^*)^3,\\
\end{aligned}
\end{equation}
where $\mathcal K[\la,\xi]$ is defined as
\begin{equation}\label{def-mK}
\mathcal K[\la,\xi]:=\eta_*\left[\frac{2\alpha_0 \la^{-2}(t)\dot{\la}(t)}{(1+|y|^2)^2}+\la^{-2}(t)\nabla U(y)\cdot \dot{\xi}(t)+\frac{n-4}{\la(t)y_1+\xi_r(t)}\la^{-2}(t)\partial_{y_1}U(y)-\mathcal R[\la]\right].
\end{equation}
We define
\begin{equation}\label{def-Sout}
\begin{aligned}
\mathcal S_{\rm out}[\la,\xi]:=&~U_{\la,\xi}(\Delta_{(r,z)}\eta_*-\partial_t \eta_*)+2\nabla \eta_*\cdot \nabla U_{\la,\xi}+(\eta_*^3 -\eta_* )U_{\la,\xi}^3 +U_{\la,\xi}\frac{n-4}r \partial_r \eta_*\\
&~+\frac{n-4}{r}\Psi_0\partial_r \eta_*+\Psi_0\Delta_{(r,z)}\eta_*+2\nabla \eta_*\cdot\nabla \Psi_0 -\Psi_0\partial_t \eta_*+\frac{n-4}{r}\eta_*(1-\eta_R)\partial_r\Psi_0\\
&~+\eta_*(1-\eta_R)\left(\la^{-2}\nabla U(y)\cdot \dot{\xi}+\frac{n-4}{\la y_1+\xi_r }\la^{-2} \partial_{y_1}U(y)\right).
\end{aligned}
\end{equation}
To further reduce the size of the error, we introduce the leading orders of $\la$ and $\xi$
$$\la_*=\frac{|\log T|(T-t)}{|\log(T-t)|^2},~~\xi_*=(\xi_{r,*},\xi_{z,*})=(\sqrt{2(n-4)(T-t)},z_0),$$
which will be derived in Section \ref{sec-laxi}. Here $z_0\in\R^3$. Notice that for $T\ll 1$, the error $\mathcal S_{\rm out}[\la,\xi]$ is supported in $\mathcal D'\times (0,T)$ with
$$\mathcal D'=\left\{(r,z)\in \mathcal D: \la_* R\leq |(r,z)-\xi(t)|\leq 2\delta \sqrt{T-t}\right\},$$
where $R=\la_*^{-\beta}$ with $\beta\in(0,1/2)$ to be determined later. Let $\Psi_1$ be the solution solving
\begin{equation}\label{def-Psi1}
\begin{cases}
\partial_t\Psi_1=\Delta_{(r,z)} \Psi_1 +\frac{n-4}{r}\partial_r\Psi_1+\mathcal S_{\rm out}[\la_*,\xi_*],~&\mbox{ in }~\mathcal D\times (0,T)\\
\Psi_1=0,~&\mbox{ on }~(\partial\mathcal D\backslash \{r=0\})\times (0,T)\\
\partial_r \Psi_1=0,~&\mbox{ on }~(\mathcal D\cap \{r=0\})\times (0,T)\\
\Psi_1(r,z,0)=0,~&\mbox{ in }~\mathcal D\\
\end{cases}
\end{equation}
where $\mathcal S_{\rm out}[\la_*,\xi_*]$ is defined by replacing $\la$, $\xi$ in $\mathcal S_{\rm out}[\la,\xi]$
by $\la_*$ and $\xi_*$, respectively. Note that the new error produced by $\Psi_1$ turns out to be smaller order and will not change the leading orders $\la_*$, $\xi_*$. We shall show this in Section \ref{sec-innerouter}.

In conclusion, the corrected approximation we finally choose is
$$u_c^*=U^*+\eta_*\Psi_0+\Psi_1.$$
In the sequel, we shall find a perturbation $\mathtt P$ such that $u=u_c^*+\mathtt P$ is the desired solution, namely,
$$\mathcal S(u_c^*+\mathtt P)=0.$$


\medskip

\section{The inner--outer gluing scheme}\label{sec-inneroutergluingscheme}

\medskip

We look for solution of the form
$$u=U^*+\mathtt{w},$$
where $\mathtt{w}$ is a small perturbation consisting of inner and outer parts
$$\mathtt{w}=\varphi_{\rm in}+\varphi_{\rm out},~~\varphi_{\rm in}=\la^{-1}(t)\eta_R \phi(y,t),~~\varphi_{\rm out}=\eta_*\Psi_0(r,z,t)+\Psi_1(r,z,t)+\psi(r,z,t)+Z^*(x,t).$$
Here
\begin{equation}\label{def-etaR}
\eta_R=\eta_{R(t)}(r,z,t)=\eta\left(\frac{|(r,z)-\xi(t)|}{\la(t)R(t)}\right),
\end{equation}
the smooth cut-off function $\eta$ is defined by \eqref{def-cutoff},
and $Z^*$ satisfies
\begin{equation}\label{eqn-Z^*}
\begin{cases}
Z_t^*=\Delta_x Z^*,~&\mbox{ in }\Omega\times (0,T)\\
Z^*(\cdot,t)=0,~&\mbox{ on }\partial \Omega\times (0,T)\\
Z^*(\cdot,0)=Z_0^*,~&\mbox{ in }\Omega
\end{cases}
\end{equation}
in the original variables $x\in\R^n$. Throughout the paper, we choose $R(t)$ such that $\la(t)R(t)\ll \sqrt{T-t}$ for $T\ll 1$.
Denote
$$\mathcal D_{2R}=\left\{(r,z)\in \R^4:|(r,z)-(\xi_r,\xi_z)|\leq 2\la R\right\}$$
and $\Psi^*=\psi+Z^*$. Then $u$ is a solution to the original problem \eqref{eqn} if

\medskip

\begin{itemize}
\item $\phi$ solves the {\em inner problem}
\begin{equation}\label{inner}
\begin{aligned}
\la^2 \phi_t = \Delta_y \phi +3U^2(y)\phi+\mathcal H(\phi,\psi,\la,\xi)~~\mbox{ in }\mathcal D_{2R}\times (0,T)
\end{aligned}
\end{equation}
where
\begin{equation}\label{def-mH}
\begin{aligned}
\mathcal H(\phi,\psi,\la,\xi)(y,t):=&~3\la U^{2}(y)[\eta_*\Psi_0+\psi+Z^*](\la y+\xi,t)+\frac{(n-4)\la}{\la y_1+\xi_r}\phi_{y_1}\\
&~+\la\left[\dot\la (\nabla_y\phi\cdot y+\phi)+\nabla_y \phi\cdot\dot\xi\right]+\frac{(n-4)\la^3}{r}\eta_*\partial_r\Psi_0\\
&~+\la^3 \mathcal N(\mathtt w)+\la^3 \mathcal K[\la,\xi]+3\la U^{2}(y)\Psi_1
\end{aligned}
\end{equation}
with $\mathcal K[\la,\xi]$ defined in \eqref{def-mK}.

\medskip

\item $\psi$ solves the {\em outer problem}
\begin{equation}\label{outer}
\psi_t =\Delta_{(r,z)} \psi+\frac{n-4}{r}\partial_r \psi  + \mathcal G(\phi,\psi,\la,\xi)~~\mbox{ in }\mathcal D\times (0,T)
\end{equation}
with
\begin{equation}\label{def-mG}
\begin{aligned}
\mathcal G(\phi,\psi,\la,\xi):=&~3\la^{-2}(1-\eta_R)U^2(y)(\psi+Z^*+\eta_*\Psi_0+\Psi_1)\\
&~+\la^{-3}\left[(\Delta_y \eta_R) \phi+2\nabla_y\eta_R\cdot\nabla_y \phi-\la^2\phi\partial_t \eta_R\right]+\frac{(n-4)\la^{-1}}{r}\phi\partial_r \eta_R\\
&~+(1-\eta_R)\mathcal K_1[\la,\xi]+\mathcal S_{\rm out}[\la,\xi]-\mathcal S_{\rm out}[\la_*,\xi_*]+(1-\eta_R)\mathcal N(\mathtt w),
\end{aligned}
\end{equation}
where
\begin{equation}\label{def-mK1}
\mathcal K_1[\la,\xi]:=\eta_*\left[\frac{2\alpha_0 \la^{-2}(t)\dot{\la}(t)}{(1+|y|^2)^2}-\mathcal R[\la]\right],
\end{equation}
$\mathcal S_{\rm out}$ is defined in \eqref{def-Sout} and
\begin{equation}\label{def-mN}
\mathcal N(\mathtt w):=(U^*+\mathtt w)^3-(U^*)^3-3U_{\la,\xi}^2\mathtt w.
\end{equation}
\end{itemize}
\medskip
We now describe our strategy to solve the inner and outer problems. We shall first develop linear theories for the associated linear problems of \eqref{outer} and \eqref{inner}. Since the solution we want to construct concentrates on an $(n-4)$-dimensional sphere with shrinking size $\sqrt{T-t}$, suitable estimates for the outer solution $\psi$ are very delicate to find. To achieve this, we find solutions in the symmetry class \eqref{symmetryclass} by using the Duhamel's formula in $\R^n$. For the linear inner problem, we want to find inner solution with proper space-time decay.
 Since the inner--outer gluing relies on delicate analysis of the space-time decay of solutions, we shall further decompose the inner problem \eqref{inner} into three different spherical harmonic modes and construct solution in each mode. To get more refined estimates for the gluing to work, we carry out a new inner--outer gluing scheme for the linear inner problem, where certain orthogonality conditions are of course needed due to the existence of the nontrivial kernels (see \eqref{kernels}) of the linearized operator $L_0$ in \eqref{def-L0}. This will give us the reduced equations for the parameter functions $\la(t)$ and $\xi(t)$. The reduced equation for $\xi(t)$ will be easy to solve. However, the reduced equation for $\la(t)$ turns out to be an integro-differential equation due to the non-local correction $\Psi_0$ in \eqref{def-Psi0}, and it is more involved. Thanks to \cite{17HMF}, we solve $\la(t)$ by following a similar procedure since the integro-differential equation for $\la(t)$ is close in spirit to that of \cite{17HMF}. Finally, by using the Schauder fixed point theorem, we solve the inner--outer gluing system and prove the existence of the desired blow-up solution.

\medskip

The rest of the paper is organized as follows. In Section \ref{sec-laxi}, we derive the leading orders for the parameter functions $\la(t)$ and $\xi(t)$. In Section \ref{sec-linearouter}, we establish the estimates for the linear outer problem with different right hand sides which appear in $\mathcal G$ defined in \eqref{def-mG}. The proof is postponed to the Appendix. In Section \ref{sec-linearinner}, we develop the linear theory for the inner problem by spherical harmonic decomposition. In Section \ref{sec-innerouter}, the inner--outer gluing system is formulated, and we shall solve $(\phi,\psi,\la,\xi)$ from the full system by the linear theories developed in Section \ref{sec-linearouter}, Section \ref{sec-linearinner} and the Schauder fixed point theorem.

\bigskip
\noindent{{\bf Notation}}. \, Throughout the paper, we shall use the symbol  $``\,\lesssim\,"$ to denote $``\,\leq\, C\,"$ for a positive constant $C$ independent of $t$ and $T$. Here $C$ might be different from line to line.


\medskip

\section{The choices of \texorpdfstring{$\lambda_*$}{lambda} and \texorpdfstring{ $\xi_*$}{xi}}\label{sec-laxi}

\medskip

In this section, we shall choose the leading orders $\la_*(t)$, $\xi_*(t)=(\xi_{r,*}(t),\xi_{z,*}(t))$ of the parameter functions $\la(t)$ and $\xi(t)$. In Section \ref{sec-linearinner}, a linear theory for inner problem concerning the solvability and estimates of the associated linear problem will be developed, where approximately the following orthogonality conditions
\begin{equation}\label{oc}
\int_{\R^4} \mathcal H(\phi,\psi,\la,\xi) Z_j (y) dy=0~~\mbox{ for all } j=1,\cdots,5,~  t\in (0,T)
\end{equation}
are needed to guarantee the existence of inner solution $\phi$ with desired space-time decay. Here $Z_j$ are the kernel functions (c.f. \eqref{kernels}) of the linearized operator $L_0$. Basically,  we will derive the scaling and translation parameters $\la(t)$ and $\xi(t)$ at main order from the orthogonality conditions \eqref{oc}.

Recall that
\begin{equation*}
\begin{aligned}
\mathcal H(\phi,\psi,\la,\xi)(y,t):=&~3\la U^{2}(y)[\eta_*\Psi_0+\psi+Z^*](\la y+\xi,t)+\frac{(n-4)\la}{\la y_1+\xi_r}\phi_{y_1}\\
&~+\la\left[\dot\la (\nabla_y\phi\cdot y+\phi)+\nabla_y \phi\cdot\dot\xi\right]+\frac{(n-4)\la^3}{r}\eta_*\partial_r\Psi_0\\
&~+\la^3 \mathcal N(\mathtt w)+\la^3 \mathcal K[\la,\xi]+3\la U^{2}(y)\Psi_1.\\
\end{aligned}
\end{equation*}
In this section, we shall single out the leading term $\mathcal H_*$ in $\mathcal H$ to derive $\la_*$ and $\xi_*$. We define
\begin{equation*}
\begin{aligned}
\mathcal H_*[\la,\xi,\Psi^*]:=&~3\la U^{2}(y)[\eta_*\Psi_0+\Psi^*](\la y+\xi,t)+\la^3 \mathcal K[\la,\xi]\\
=&~3\la U^{2}(y)[\eta_*\Psi_0+\Psi^*](\la y+\xi,t)+\frac{2\alpha_0\eta_* \la(t)\dot{\la}(t)}{(1+|y|^2)^2}\\
&~+\la(t)\eta_*\nabla U(y)\cdot \dot{\xi}(t)+\frac{(n-4)\la(t)\eta_*}{\la(t)y_1+\xi_r(t)}\partial_{y_1}U(y)\\
&~-\frac{\alpha_0\la^2(t)\eta_*}{(1+|y|^2)^{3/2}}\int_{-T}^t \dot{\la}(s)\left[-\zeta k_{\zeta\zeta}(\zeta,t-s)+k_\zeta(\zeta,t-s)\right]ds\\
&~-\alpha_0\eta_*\la^2(t)\left[\frac{(r-\xi_r)\dot{\xi}_r+(z-\xi_z)\dot{\xi}_z-\la(t)\dot{\la}(t)}{(1+|y|^2)^{1/2}}\right]\int_{-T}^t \dot{\la}(s) k_{\zeta}(\zeta,t-s)ds,\\
\end{aligned}
\end{equation*}
where $\Psi^*=\psi+Z^*$. The contribution of the rest terms $\mathcal H-\mathcal H_*$ in the orthogonality conditions turns out to be negligible compared to the leading term $\mathcal H_*$. We shall deal with this in Section \ref{sec-innerouter} when we finally solve the inner--outer gluing system.

Since $R(t)$ is chosen such that $\la(t)R(t)\ll \sqrt{T-t}$, we see that
$${\rm supp} (\eta_R) \subset {\rm supp} (\eta_*)$$
for $T\ll 1$. So in the inner region $\mathcal D_{2R}$, $\eta_*\equiv 1$. Then
$$\int_{\mathbb R^4} \mathcal H_*[\la,\xi,\Psi^*] Z_1(y)dy=0$$
implies that
\begin{equation*}
\dot{\xi}_r(t) +\frac{n-4}{\xi_r(t)}=o(1),
\end{equation*}
where $o(1)\to 0$ as $t\nearrow T.$ So the choice of $\xi_r(t)$ at main order is
\begin{equation*}
\xi_{r,*}(t)=\sqrt{2(n-4)(T-t)}.
\end{equation*}
Similarly, the orthogonality conditions
$$\int_{\R^4} \mathcal H_*[\la,\xi,\Psi^*] Z_j(y) dy=0,~~j=2,3,4$$
imply that
\begin{equation*}
\dot\xi_{z_j}(t)=o(1),~~j=2,3,4,
\end{equation*}
where $o(1)\to 0$ as $t\nearrow T.$ So at main order, the choice of $\xi_z(t)$ is
\begin{equation*}
\xi_{z,*}(t)=z_0
\end{equation*}
where $z_0$ is a given point in $\R^3$. For convenience, we take $z_0\equiv (0,0,0)$.

In order to get the reduced equation for $\la(t)$ from
$$\int_{\R^4} \mathcal H_*[\la,\xi,\Psi^*] Z_5(y)dy=0,$$
we first evaluate
\begin{equation*}
\begin{aligned}
\int_{\R^4} \mathcal R[\la] Z_5(y)dy=&~\frac{\alpha_0}{\la(t)}\int_{\R^4} \frac{Z_5(y)}{(1+|y|^2)^{3/2}}\left(\int_{-T}^t \dot{\la}(s)[k_{\zeta}(\zeta,t-s)-\zeta k_{\zeta\zeta}(\zeta,t-s)]ds\right) dy\\
&~-\alpha_0 \dot{\la}(t)\int_{\R^4} \frac{Z_5(y)}{(1+|y|^2)^{1/2}}\left(\int_{-T}^t \dot{\la}(s)k_{\zeta}(\zeta,t-s)ds\right)dy.
\end{aligned}
\end{equation*}
Let
\begin{equation}\label{def-Upsilon}
\Upsilon=\frac{\zeta^2}{t-s}=\frac{\la^2(t)(1+|y|^2)}{t-s},~~\tau=\frac{\la^2(t)}{t-s}
\end{equation}
and
\begin{equation*}
K(\Upsilon)=\frac{1-e^{-\frac{\Upsilon}{4}}}{\Upsilon}.
\end{equation*}
Then, recalling from \eqref{def-k}, we get
\begin{equation*}
\begin{aligned}
k_{\zeta}(\zeta,t-s)-\zeta k_{\zeta\zeta}(\zeta,t-s)=&~-\frac{8(1-e^{-\frac{\zeta^2}{4(t-s)}})}{\zeta^3}+\frac{2e^{-\frac{\zeta^2}{4(t-s)}}}{\zeta (t-s)}+\frac{\zeta e^{-\frac{\zeta^2}{4(t-s)}}}{4(t-s)^2}\\
=&~-4\left(\frac{\Upsilon}{t-s}\right)^{3/2}K_{\Upsilon\Upsilon}(\Upsilon)
\end{aligned}
\end{equation*}
and also
\begin{equation*}
k_{\zeta}(\zeta,t-s)=-\frac{2}{\zeta^3}+\frac{e^{-\frac{\zeta^2}{4(t-s)}}}{2\zeta(t-s)}+\frac{2e^{-\frac{\zeta^2}{4(t-s)}}}{\zeta^3}=\frac{2\sqrt{\Upsilon}}{(t-s)^{3/2}}K_{\Upsilon}(\Upsilon).
\end{equation*}
Therefore, we obtain
\begin{equation}\label{oc5R}
\begin{aligned}
\int_{\R^4} \mathcal R[\la] Z_5(y)dy=&~-\frac{4\alpha_0}{\la^2(t)}\int_{\R^4} \frac{Z_5(y)}{(1+|y|^2)^{2}}\left(\int_{-T}^t \frac{\dot{\la}(s)}{t-s}\Upsilon^2 K_{\Upsilon\Upsilon}(\Upsilon) ds\right) dy\\
&~-\frac{2\alpha_0 \dot{\la}(t)}{\la(t)}\int_{\R^4} \frac{Z_5(y)}{1+|y|^2}\left(\int_{-T}^t \frac{\dot{\la}(s)}{t-s} \Upsilon K_{\Upsilon}(\Upsilon) ds\right)dy.
\end{aligned}
\end{equation}

We expand $Z^*(\la y+\xi,t)$ and $\psi(\la y+\xi,t)$ at $q=(0,z_0)$
$$Z^*(\la y+\xi,t)=Z_0^*(q)+o(1),~~\psi(\la y+\xi,t)=\psi(q,0)+o(1).$$
On the other hand, from \eqref{def-Psi0} and \eqref{def-Upsilon}, we have
\begin{equation}\label{oc5Psi_0}
\begin{aligned}
\int_{\R^4} 3\la(t) U^2(y) Z_5(y) \Psi_0(\rho,t) dy=&-3\alpha_0\la(t)\int_{\R^4}  U^2(y) Z_5(y) \left(\int_{-T}^t \dot\la(s) k(\zeta(\rho,t),t-s) ds\right) dy\\
=&-3\alpha_0\la(t)\int_{\R^4}  U^2(y) Z_5(y) \left(\int_{-T}^t \frac{\dot\la(s)}{t-s} K(\Upsilon) ds\right) dy.
\end{aligned}
\end{equation}
Then, the orthogonality condition
$$\int_{\R^4} \mathcal H_*[\la,\xi,\Psi^*] Z_5(y)dy=0$$
gives
\begin{equation}\label{oc5''}
\int_{\R^4} \bigg(3 U^{2}(y)[\Psi_0(\rho,t)+\psi(q,0)+Z_0^*(q)]+\la^2(t)\mathcal K[\la,\xi]\bigg) Z_5(y) dy+o(1)=0.
\end{equation}
By \eqref{oc5''}, \eqref{def-mK}, \eqref{oc5R}, \eqref{oc5Psi_0} and some computations, we obtain
\begin{equation}\label{oc5all}
\begin{aligned}
&~4\alpha_0 \int_{\R^4} \frac{Z_5(y)}{(1+|y|^2)^{2}}\left(\int_{-T}^t \frac{\dot{\la}(s)}{t-s}\Upsilon^2 K_{\Upsilon\Upsilon} ds\right) dy-3\alpha_0 \int_{\R^4}  U^2(y) Z_5(y) \left(\int_{-T}^t \frac{\dot\la(s)}{t-s} K(\Upsilon) ds\right) dy\\
&+2\alpha_0 \dot{\la}(t)\la(t)\int_{\R^4} \frac{Z_5(y)}{1+|y|^2}\left(\int_{-T}^t \frac{\dot{\la}(s)}{t-s} \Upsilon K_{\Upsilon}(\Upsilon) ds\right)dy+2\alpha_0\dot{\la}(t)\int_{\R^4} \frac{Z_5(y)}{(1+|y|^2)^2}dy\\
&+3[Z_0^*(q)+\psi(q,0)]\int_{\R^4} U^2(y) Z_5(y) dy+o(1)=0.
\end{aligned}
\end{equation}
The scaling parameter $\la(t)$ should be decreasing to $0$ as $t\nearrow T$ so that a blow-up solution can be constructed. So we may  impose
$$\dot{\la}(t)=o(1)~\mbox{ as } t\nearrow T.$$
Then \eqref{oc5all} becomes
\begin{equation}\label{oc5'''}
\begin{aligned}
&4\alpha_0 \int_{\R^4} \frac{Z_5(y)}{(1+|y|^2)^{2}}\left(\int_{-T}^t \frac{\dot{\la}(s)}{t-s}\Upsilon^2 K_{\Upsilon\Upsilon} ds\right) dy-3\alpha_0 \int_{\R^4}  U^2(y) Z_5(y) \left(\int_{-T}^t \frac{\dot\la(s)}{t-s} K(\Upsilon) ds\right) dy\\
&=-3[Z_0^*(q)+\psi(q,0)]\int_{\R^4} U^2(y) Z_5(y) dy+o(1).
\end{aligned}
\end{equation}
We define
\begin{equation*}
\begin{aligned}
&4\alpha_0 \int_{\R^4} \frac{Z_5(y)}{(1+|y|^2)^{2}}\left(\int_{-T}^t \frac{\dot{\la}(s)}{t-s}\Upsilon^2 K_{\Upsilon\Upsilon} ds\right) dy-3\alpha_0 \int_{\R^4}  U^2(y) Z_5(y) \left(\int_{-T}^t \frac{\dot\la(s)}{t-s} K(\Upsilon) ds\right) dy\\
&:=\int_{-T}^t \frac{\dot{\la}(s)}{t-s}\Gamma\left(\frac{\la^2(t)}{t-s}\right) ds
\end{aligned}
\end{equation*}
with
\begin{equation}\label{def-Gamma}
\Gamma(\tau):=\alpha_0 |\mathbb{S}^3|\int_0^{\infty} \left(\frac{4Z_5(y)|y|^3}{(1+|y|^2)^2}\Upsilon^2 K_{\Upsilon\Upsilon}(\Upsilon)-3 U^2(y) Z_5(y)|y|^3K(\Upsilon) \right)\bigg|_{\Upsilon=\tau(1+|y|^2)} d|y|,
\end{equation}
where $|\mathbb{S}^3|$ is the area of the unit sphere $\mathbb{S}^3$.
Now we need to analyze the behavior of $\Gamma(\tau)$ as $\tau\ll 1$ and $\tau\gg 1$. By the definition of $U(y)$ and $Z_5(y)$ as in \eqref{def-U} and \eqref{kernels} respectively, we write $\Gamma(\tau)$ explicitly as
\begin{equation*}
\begin{aligned}
\Gamma(\tau)=&~\alpha_0 |\mathbb{S}^3|\int_0^{\infty} \left(\frac{4Z_5(y)|y|^3}{(1+|y|^2)^2}\Upsilon^2 K_{\Upsilon\Upsilon}(\Upsilon)-3 U^2(y) Z_5(y)|y|^3K(\Upsilon) \right)\bigg|_{\Upsilon=\tau(1+|y|^2)} d|y|\\
=&~\alpha_0^2 |\mathbb{S}^3|\int_0^{\infty} \left(\frac{(1-|y|^2)|y|^3}{(1+|y|^2)^4}\left[4\Upsilon^2 K_{\Upsilon\Upsilon}(\Upsilon)-3 \alpha_0^2 K(\Upsilon)\right] \right)\bigg|_{\Upsilon=\tau(1+|y|^2)} d|y|.\\
\end{aligned}
\end{equation*}
Since
\begin{equation*}
4\Upsilon^2 K_{\Upsilon\Upsilon}(\Upsilon)-3 \alpha_0^2 K(\Upsilon)=-\frac{\Upsilon e^{-\frac{\Upsilon}{4}}}{4}-2e^{-\frac{\Upsilon}{4}}+(8-3\alpha_0^2)\frac{1-e^{-\frac{\Upsilon}{4}}}{\Upsilon}
\end{equation*}
with $\alpha_0=2\sqrt{2}$,
we obtain
\begin{equation*}
\Gamma(\tau)=\begin{cases}
c_*+O(\tau),~&\mbox{ for } \tau<1\\
O\left(\frac{1}{\tau}\right),~&\mbox{ for } \tau>1\\
\end{cases}
\end{equation*}
where $c_*>0$ is a constant. Therefore, \eqref{oc5'''} is reduced to
\begin{equation}\label{oc5''''}
c_*\int_{-T}^{t-\la^2(t)}\frac{\dot{\la}(s)}{t-s}ds=-3c_0[Z_0^*(q)+\psi(q,0)]+o(1),
\end{equation}
where
$$c_0:=\int_{\R^4} U^2(y) Z_5(y) dy<0.$$
Since $\la(t)$ decreases to $0$ as $t\nearrow T,$
we impose
\begin{equation*}
a_*:=Z_0^*(q)+\psi(q,0)<0.
\end{equation*}
Now we claim that a good choice of $\la(t)$ at main order is
\begin{equation}\label{approxla}
\dot{\la}(t)=-\frac{c}{|\log(T-t)|^2},
\end{equation}
where $c>0$ is a constant to be determined later.
Indeed, by substituting, we get
\begin{equation*}
\begin{aligned}
\int_{-T}^{t-\la^2(t)}\frac{\dot{\la}(s)}{t-s}ds=&\int_{-T}^{t-(T-t)}\frac{\dot{\la}(s)}{t-s}ds+\int_{t-(T-t)}^{t-\la^2(t)} \frac{\dot{\la}(t)}{t-s}ds-\int_{t-(T-t)}^{t-\la^2(t)} \frac{\dot{\la}(t)-\dot{\la}(s)}{t-s}ds\\
=&\int_{-T}^{t-(T-t)}\frac{\dot{\la}(s)}{t-s}ds+\dot{\la}(t)(\log(T-t)-2\log\la(t))-\int_{t-(T-t)}^{t-\la^2(t)} \frac{\dot{\la}(t)-\dot{\la}(s)}{t-s}ds\\
\approx&\int_{-T}^{t}\frac{\dot{\la}(s)}{T-s}ds-\dot{\la}(t)\log(T-t):=\beta(t)
\end{aligned}
\end{equation*}
as $t\nearrow T$. By \eqref{approxla}, we then see that
$$\log(T-t)\frac{d\beta}{dt}(t)=\frac{d}{dt}\left(-\log^2(T-t)\dot{\la}(t)\right)=0,$$
which means $\beta(t)$ is a constant. Thus, equation \eqref{oc5''''} can be approximately solved for
$$\dot{\la}(t)=-\frac{c}{|\log(T-t)|^2}$$
with the constant $c$ chosen as
$$-c\int_{-T}^T \frac{ds}{(T-s)|\log(T-s)|^2}=\kappa_*,$$
where
$$\kappa_*:=-\frac{3c_0 a_*}{c_*}.$$
At main order, we obtain
$$\dot{\la}(t)=\kappa_*\dot{\la}_*(t)$$
with
$$\dot{\la}_*(t)=-\frac{|\log T|}{|\log(T-t)|^2}.$$
By imposing $\la_*(T)=0$, we finally get
\begin{equation*}
\la_*(t)=\frac{|\log T|(T-t)}{|\log(T-t)|^2}(1+o(1))~\mbox{ as }~t\nearrow T.
\end{equation*}

\medskip


\medskip

\section{Linear theory for the outer problem}\label{sec-linearouter}

\medskip

In order to solve the outer problem \eqref{outer}, we need a linear theory for the associated linear problem. We consider
\begin{equation}\label{outer-linear}
\begin{cases}
\psi_t=\Delta_{(r,z)} \psi+\frac{n-4}{r}\partial_r \psi +\mathbf {f}_{\rm out},~&\mbox{ in }\mathcal D\times (0,T)\\
\psi=0,~&\mbox{ on }(\partial \mathcal D\backslash \{r=0\})\times (0,T)\\
\psi_r=0,~&\mbox{ on } (\mathcal D\cap\{r=0\})\times (0,T)\\
\psi(r,z,0)=0,~&\mbox{ in }\mathcal D\\
\end{cases}
\end{equation}
where the non-homogeneous term $\mathbf f_{{\rm out}}$ in \eqref{outer-linear} is assumed to be bounded with respect to the weights appearing in the outer problem \eqref{outer}. Define the weights
\begin{equation}\label{def-weights}
\left\{
\begin{aligned}
&\varrho_1:=\la_*^{\nu-3}(t)R^{-2-\alpha}(t)\chi_{\{|(r,z)-\xi(t)|\leq 2\la_* R\}}\\
&\varrho_2:=\frac{\la_*^{\nu_2}}{|(r,z)-\xi(t)|^2}\chi_{\{\la_* R \leq|(r,z)-\xi(t)|\leq 2\delta\sqrt{T-t}\}}\\
&\varrho_3:=1\\
\end{aligned}
\right.
\end{equation}
where we choose $R(t)=\la_*^{-\beta}(t)$ for $\beta\in(0,1/2)$.

Define the norms
\begin{equation}\label{def-norm**}
\|f\|_{**}:=\sup_{(r,z,t)\in \mathcal D\times(0,T)} \left(\sum\limits_{i=1}^3 \varrho_i(r,z,t)\right)^{-1}|f(r,z,t)|
\end{equation}

\begin{equation}\label{def-norm*}
\begin{aligned}
\|\psi\|_{*}:=&~\frac{\la_*^{2-\nu-\frac4n}(0)R^{2+\alpha-\frac8n}(0)}{|\log T|}\|\psi\|_{L^{\infty}(\Omega\times(0,T))}+\frac{\la_*^{\frac52-\nu-\frac2n}(0)R^{2+\alpha-\frac4n}(0)}{|\log T|}\|\nabla\psi\|_{L^{\infty}(\Omega\times(0,T))}\\
&~+\sup_{(r,z,t)\in \mathcal D\times(0,T)}\left[ \frac{\la_*^{2-\nu-\frac4n}(t)R^{2+\alpha-\frac8n}(t)}{|\log(T-t)|}\left|\psi(r,z,t)-\psi(r,z,T)\right|\right]\\
&~+\sup_{(r,z,t)\in \mathcal D\times(0,T)}\left[\frac{\la_*^{\frac52-\nu-\frac2n}(t)R^{2+\alpha-\frac4n}(t)}{|\log (T-t)|}\left|\nabla\psi(r,z,t)-\nabla\psi(r,z,T)\right|\right]\\
&~+\sup_{\mathcal D\times I_T} \frac{\la_*^{2-\nu+\gamma}(t_2)R^{2+\alpha}(t_2)}{(t_2-t_1)^{\gamma}} |\psi(r,z,t_2)-\psi(r,z,t_1)|,
\end{aligned}
\end{equation}
where $y=\frac{(r,z)-\xi(t)}{\la(t)}$, $\nu,\alpha\in(0,1)$, $\gamma\in(0,1)$,  and the last supremum is taken over
$$\mathcal D\times I_T=\left\{(r,z,t_1,t_2): ~(r,z)\in\mathcal D, ~0\leq t_1\leq t_2\leq T,~t_2-t_1\leq \frac{1}{10}(T-t_2)\right\}.$$
For problem \eqref{outer-linear}, we have the following proposition.
\begin{prop}\label{outer-apriori}
Let $\psi$ be the solution to problem \eqref{outer-linear} with $\|\mathbf {f}_{\rm out}\|_{**}<+\infty$. Then it holds that
\begin{equation}\label{est-outer-apriori}
\|\psi\|_*\lesssim \|\mathbf {f}_{\rm out}\|_{**}.
\end{equation}
\end{prop}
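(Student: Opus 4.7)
The plan is to exploit the fact that the operator $\partial_r^2 + \frac{n-4}{r}\partial_r + \Delta_z$ is precisely the Euclidean Laplacian $\Delta_x$ on $\mathbb{R}^n$ acting on functions with the symmetry \eqref{symmetryclass}. This observation lifts problem \eqref{outer-linear} to the standard heat equation on $\mathbb{R}^n$, killing the awkward singular coefficient $\frac{n-4}{r}$. After extending $\mathbf{f}_{\rm out}$ symmetrically to a function $F$ on $\mathbb{R}^n$ (and absorbing the boundary condition on $\partial\Omega$ via a local reflection since, as $t\nearrow T$, all the action concentrates near the shrinking sphere, which is well inside $\Omega$), the Duhamel formula yields
\[
\psi(x,t) \;=\; \int_0^t \int_{\mathbb{R}^n} G(x-y, t-s)\, F(y,s)\, dy\, ds,
\]
with $G$ the standard $n$-dimensional heat kernel. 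The hypothesis $\|\mathbf{f}_{\rm out}\|_{**}<\infty$ gives the pointwise bound $|F|\leq \|\mathbf{f}_{\rm out}\|_{**}(\varrho_1+\varrho_2+\varrho_3)$, and I would estimate the contributions of the three pieces separately.

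For $\varrho_1$, supported in the tubular neighborhood of the shrinking $(n-4)$-sphere of thickness $\lambda_* R$, the crucial point is that this tube has $n$-dimensional volume comparable to $(\lambda_* R)^4 \xi_r^{n-4}$, and convolving the $4$-codimensional source against the $n$-dimensional heat kernel yields an extra smoothing in the $n-4$ transverse directions, which is exactly the origin of the unusual powers $\lambda_*^{4/n} R^{8/n}$ appearing in the weight of $\|\psi\|_*$. For $\varrho_2$, supported in the self-similar annulus $\lambda_* R \leq |(r,z)-\xi(t)|\leq 2\delta\sqrt{T-t}$, I would exploit the scaling $|(r,z)-\xi|\lesssim \sqrt{T-t}$ to bound the convolution of $|(r,z)-\xi|^{-2}$ against $G$ by a factor polynomial in $\sqrt{T-t}$ times $\lambda_*^{\nu_2}$. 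For the bounded part $\varrho_3\equiv 1$ a direct Duhamel estimate on $(0,T)$ suffices. The gradient bound $\|\nabla\psi\|_\infty$ follows from exactly the same argument with $\nabla_x G$, which contributes the additional $(t-s)^{-1/2}$ factor responsible for the shifted weight $\lambda_*^{5/2-\nu-2/n} R^{2+\alpha-4/n}$.

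The temporal pieces of $\|\psi\|_*$, namely the comparisons with $\psi(\cdot,T)$ and $\nabla\psi(\cdot,T)$, will come out of the representation $\psi(\cdot,t)-\psi(\cdot,T)= -\int_t^T e^{(s-t)\Delta}F(\cdot,s)\,ds$ (well-defined because the source is integrable in time once the weights are taken into account): repeating the three-region analysis on the interval $[t,T]$ gives the stated rate in $\lambda_*^{\nu+4/n-2}R^{8/n-2-\alpha}|\log(T-t)|$. The Hölder seminorm in time is standard: split the Duhamel integral at $s=2t_1-t_2$, use the $L^\infty$ bound on $F$ weighted by $\sum\varrho_i$ on $[2t_1-t_2,t_2]$, and the gradient-in-$s$ estimate on $[0,2t_1-t_2]$.

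The principal obstacle, and the reason the proof is deferred to the Appendix, is the very careful bookkeeping required to align the powers of $\lambda_*(t)$, $R(t)=\lambda_*^{-\beta}(t)$, $\sqrt{T-t}$, and $\xi_r(t)\sim\sqrt{T-t}$ through the convolution; each of the three weights generates several sub-cases according to whether the target point $(r,z)$ lies inside, straddling, or outside the support of the piece being convolved, and the condition $\lambda_* R \ll \sqrt{T-t}$ must be used repeatedly. The exponent choices $\nu,\nu_2,\alpha\in(0,1)$, $\beta\in(0,1/2)$, $\gamma\in(0,1)$ in the definitions \eqref{def-weights} and \eqref{def-norm*} are precisely calibrated so that this bookkeeping closes; they will eventually be further constrained (in Section \ref{sec-innerouter}) so that the inner--outer gluing fixed point argument contracts, which is what enforces the dimension restriction $5\leq n\leq 7$.
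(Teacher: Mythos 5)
Your strategy coincides with the paper's: lift \eqref{outer-linear} to the standard heat equation on $\mathbb{R}^n$ via the symmetry \eqref{symmetryclass}, write $\psi$ by Duhamel against the $n$-dimensional heat kernel, split the source according to the three weights $\varrho_1,\varrho_2,\varrho_3$ in \eqref{def-weights}, and chase the scales $\lambda_*(t)$, $R(t)=\lambda_*^{-\beta}(t)$, $\sqrt{T-t}$, $\xi_r(t)\sim\sqrt{2(n-4)(T-t)}$ through the convolution; the paper isolates these three calculations in Lemmas \ref{lemma-rhs1}--\ref{lemma-rhs3}. Your account of where the unusual exponents $\lambda_*^{4/n}R^{8/n}$ and the extra $(t-s)^{-1/2}$ gradient gain come from is correct. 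The paper handles the boundary slightly differently --- by writing $\psi=\psi_0+\psi_1$ with $\psi_0$ the whole-space Cauchy solution and $\psi_1$ a homogeneous boundary-value correction controlled by standard parabolic estimates --- but your ``boundary is far from the concentration'' heuristic is morally the same and the cleaner decomposition is a detail you could easily supply.

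There is, however, one genuine error. You claim
\[
\psi(\cdot,t)-\psi(\cdot,T)=-\int_t^T e^{(s-t)\Delta}F(\cdot,s)\,ds,
\]
which is not a valid identity (as written, $e^{(s-t)\Delta}$ with $s>t$ is backward heat and ill-defined; even replacing it by $e^{(T-s)\Delta}$ the formula is still wrong). From Duhamel one has
\[
\psi(\cdot,t)-\psi(\cdot,T)=\int_0^t\bigl[G(\cdot,t-s)-G(\cdot,T-s)\bigr]*F(\cdot,s)\,ds-\int_t^T G(\cdot,T-s)*F(\cdot,s)\,ds,
\]
and the first integral, the kernel-difference contribution from $s\in[0,t]$, is \emph{not} negligible. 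Indeed in the paper's estimate for \eqref{outerT-rhs1} this is the piece $I_{21}$ (split further at $s=t-(T-t)$), and its bound $\lambda_*^{\nu}(t)R^{2-\alpha}(t)|\log(T-t)|$ actually carries the logarithm that appears in the statement, whereas the tail piece $I_{23}=\int_t^T G(\cdot,T-s)*F\,ds$ loses the $|\log(T-t)|$. So if you run the argument only on $[t,T]$ as you propose, you obtain a bound without the log factor but miss the dominant term, and you would not be able to prove \eqref{outerT-rhs1}, \eqref{outergradientT-rhs1}, or their analogues in Lemma \ref{lemma-rhs2}. The fix is exactly the three-term split $I_{21}+I_{22}+I_{23}$ the paper uses, estimating $I_{21}$ by pulling out $(T-t)\int_0^1|\partial_t G(\cdot,t_v-s)|\,dv$.

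The rest of your outline --- the Hölder-in-time split at $s=2t_1-t_2=t_1-(t_2-t_1)$ and the role of the constraints $\beta\in(0,1/2)$, $\nu,\nu_2,\alpha\in(0,1)$ --- is consistent with the paper; the bookkeeping is indeed the full content of the Appendix, as you anticipate.
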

In order to establish Proposition \ref{outer-apriori}, we consider
\begin{equation}\label{outer-linear''}
\begin{cases}
\psi_t=\Delta_{\R^n} \psi +f,~&\mbox{ in }\Omega\times (0,T)\\
\psi=0,~&\mbox{ on }\partial \Omega\times (0,T)\\
\psi(x,0)=0,~&\mbox{ in }\Omega\\
\end{cases}
\end{equation}
which is defined in $\R^n$ in the symmetry class \eqref{symmetryclass}. For problem \eqref{outer-linear''}, we prove the following three lemmas concerning the a priori estimates with different right hand sides.

\begin{lemma}\label{lemma-rhs1}
Let $\psi$ solve problem \eqref{outer-linear''} with right hand side
$$|f(x,t)|\lesssim \la_*^{\nu-3}(t)R^{-2-\alpha}(t)\chi_{\{|(r,z)-\xi(t)|\leq 2\la_* R\}}.$$ If
$$\nu-3+\beta(2+\alpha)<0,~\nu-\beta(2-\alpha)>0,$$
then
\begin{equation}\label{outer-rhs1}
|\psi(x,t)|\lesssim\la_*^{\nu-2+\frac4n}(0) R^{-2-\alpha+\frac8n}(0)|\log T|,
\end{equation}
\begin{equation}\label{outerT-rhs1}
|\psi(x,t)-\psi(x,T)|\lesssim \la_*^{\nu-2+\frac4n}(t) R^{-2-\alpha+\frac8n}(t)|\log(T-t)|,
\end{equation}
\begin{equation}\label{outergradient-rhs1}
|\nabla \psi(x,t)|\lesssim\la_*^{\nu-\frac52+\frac2n}(0)R^{-2-\alpha+\frac4n}(0)|\log T|,
\end{equation}
\begin{equation}\label{outergradientT-rhs1}
|\nabla \psi(x,t)-\nabla \psi(x,T)|\lesssim\la_*^{\nu-\frac52+\frac2n}(t)R^{-2-\alpha+\frac4n}(t)|\log(T-t)|,
\end{equation}
and
\begin{equation}\label{outerholder-rhs1}
|\psi(x,t_2)-\psi(x,t_1)|\lesssim  \la_*^{\nu+\frac{\mu}{2}-3}(t_2)R^{-2-\alpha}(t_2) (t_2-t_1)^{1-\mu/2},
\end{equation}
where $0\leq t_1\leq t_2\leq T$ with $t_2-t_1\leq \frac{1}{10}(T-t_2)$ and $\mu\in(0,1)$.
\end{lemma}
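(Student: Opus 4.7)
The plan is to view $\psi$ as a solution of the heat equation in $\R^n$ and apply Duhamel's formula, exploiting the fact that $f(\cdot, s)$ is concentrated in a very thin tubular neighborhood of the shrinking $(n-4)$-sphere $\{|x^*| = \xi_{r,*}(s),\ z = z_0\}$. In the symmetry class \eqref{symmetryclass} the solution is automatically invariant under the rotations and reflections of \eqref{domain-symmetry}, so the condition $\psi_r = 0$ at $\{r=0\}$ is built in; for bounded $\Omega$ the Dirichlet heat kernel differs from the free $\R^n$ kernel by exponentially small corrections because $\operatorname{supp} f$ stays away from $\partial\Omega$ for $T\ll 1$. Thus it suffices to bound
\[ \psi(x,t) = \int_0^t \bigl(e^{(t-s)\Delta_{\R^n}} f(\cdot,s)\bigr)(x)\, ds. \]

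Set $M(s) := \la_*^{\nu-3}(s) R^{-2-\alpha}(s)$ for the pointwise bound on $f$ and let $V(s)$ denote the $\R^n$-volume of its support, so
\[ V(s) \sim \xi_{r,*}^{n-4}(s)\,(\la_*(s) R(s))^4 \sim (T-s)^{(n-4)/2}(\la_* R)^4(s), \]
reflecting the product of the $(n-4)$-sphere of radius $\xi_{r,*}\sim\sqrt{T-s}$ with the $4$-dimensional transverse ball of radius $\la_* R$. Combining the two standard heat-semigroup bounds $\|e^{\tau\Delta}g\|_{L^\infty} \le \|g\|_{L^\infty}$ and $\|e^{\tau\Delta}g\|_{L^\infty} \le C\tau^{-n/2}\|g\|_{L^1}$ yields
\[ \bigl|e^{(t-s)\Delta}f(\cdot,s)(x)\bigr| \le M(s)\,\min\!\Bigl(1,\,\tfrac{V(s)}{(t-s)^{n/2}}\Bigr), \]
and after splitting the $s$-integral at the crossover time $\tau^*(s) := V(s)^{2/n}$ both pieces evaluate, up to a constant, to $M(s) V(s)^{2/n}$. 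A direct computation identifies
\[ M(s)V(s)^{2/n} \sim \la_*^{\nu-2+4/n}(s)\, R^{-2-\alpha+8/n}(s), \]
and under the hypotheses $\nu-3+\beta(2+\alpha)<0$ and $\nu-\beta(2-\alpha)>0$ this quantity is integrable in $s$ with only a logarithmic loss, giving \eqref{outer-rhs1}; running the argument on $[t,T]$ instead of $[0,t]$ yields \eqref{outerT-rhs1}.

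For \eqref{outergradient-rhs1} and \eqref{outergradientT-rhs1} I substitute the analogous gradient bounds $\|\nabla e^{\tau\Delta}g\|_{L^\infty} \le C\tau^{-1/2}\|g\|_{L^\infty}$ and $\|\nabla e^{\tau\Delta}g\|_{L^\infty} \le C\tau^{-(n+1)/2}\|g\|_{L^1}$; the optimum at $\tau = V^{2/n}$ is now $M(s) V(s)^{1/n} \sim \la_*^{\nu-5/2+2/n}(s) R^{-2-\alpha+4/n}(s)$, matching the target after the same $s$-integration. For the H\"older-in-time estimate \eqref{outerholder-rhs1} I split
\[ \psi(x,t_2)-\psi(x,t_1) = \int_{t_1}^{t_2} e^{(t_2-s)\Delta}f(\cdot,s)(x)\, ds + \int_0^{t_1}\bigl[e^{(t_2-s)\Delta}-e^{(t_1-s)\Delta}\bigr] f(\cdot,s)(x)\, ds, \]
bound the first term by $M(s)(t_2-t_1)$ and, for the second, interpolate the identity $e^{\tau_2\Delta}-e^{\tau_1\Delta} = \int_{\tau_1}^{\tau_2} \Delta e^{\sigma\Delta}\, d\sigma$ with the uniform bound to produce a factor $(t_2-t_1)^{1-\mu/2}$; the hypothesis $t_2-t_1 \le (T-t_2)/10$ keeps $\la_*$ and $R$ comparable on $[t_1,t_2]$ and the remaining $s$-integration is absorbed as in the $L^\infty$ case.

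The main obstacle is the careful bookkeeping through the minimisation at $\tau^*(s)$ and the subsequent $s$-integration: the conditions $\nu-3+\beta(2+\alpha)<0$ and $\nu-\beta(2-\alpha)>0$ are exactly what is needed so that the pre-logarithmic integrand is integrable on $(0,T)$ and that its supremum is attained near $s=t$ (respectively $s=0$ for \eqref{outer-rhs1} and \eqref{outergradient-rhs1}), producing the right-hand sides actually stated in the lemma. Everything else is mechanical, consistent with the authors' decision to defer the full proof to the Appendix.
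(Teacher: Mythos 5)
Your approach is the same as the paper's: write $\psi$ via Duhamel's formula for the $\R^n$ heat semigroup, use that $f(\cdot,s)$ is supported on a thin tube of $n$-dimensional volume $V(s)\sim (T-s)^{(n-4)/2}(\la_*R)^4(s)$ around the shrinking sphere, and balance the $L^\infty$-contraction and the $L^1\to L^\infty$ smoothing bounds. Your crossover time $\tau^*(s)=V(s)^{2/n}$ is, up to logarithmic factors, exactly the $\la_*^{\delta_1}(t)$ with $\delta_1 = \frac{n+4-8\beta}{n}$ that the paper chooses after the fact, and the gradient and H\"older estimates proceed by the same substitutions you describe. The hypotheses $\nu-3+\beta(2+\alpha)<0$ and $\nu-\beta(2-\alpha)>0$ play exactly the role you assign them.

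There is one genuine gap: your claim that \eqref{outerT-rhs1} follows by ``running the argument on $[t,T]$ instead of $[0,t]$'' is not correct as stated. Writing
\[
\psi(x,t)-\psi(x,T) = -\int_t^T e^{(T-s)\Delta}f(\cdot,s)(x)\,ds \;-\;\int_0^t \bigl[e^{(T-s)\Delta}-e^{(t-s)\Delta}\bigr]f(\cdot,s)(x)\,ds ,
\]
the first integral (`the $[t,T]$ piece') is bounded by $\la_*^\nu(t)R^{2-\alpha}(t)$, which is \emph{strictly smaller} than the right-hand side of \eqref{outerT-rhs1}; the dominant contribution comes from the second integral on $[0,t]$, which requires the kernel time-difference estimate with a further split at $s=t-(T-t)$ (the paper's $I_{21}$, $I_{22}$, with $I_{22}$ giving the stated bound). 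This is exactly the structure you do invoke — correctly — for the H\"older estimate \eqref{outerholder-rhs1}, so the fix is simply to apply the same decomposition for \eqref{outerT-rhs1} instead of only the $[t,T]$ tail. One more cosmetic remark: the phrase ``both pieces evaluate, up to a constant, to $M(s)V(s)^{2/n}$'' conflates the integrand-at-crossover with the integral and blurs the $\la_*(0)$- versus $\la_*(t)$-localization of the final bounds; to get exactly \eqref{outer-rhs1} and \eqref{outerT-rhs1} one has to track which endpoint of the $s$-integral dominates and where the logarithmic losses come from, which is what the paper's three-piece time splitting makes explicit.
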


\begin{lemma}\label{lemma-rhs2}
Let $\psi$ solve problem \eqref{outer-linear''} with right hand side
$$|f(x,t)|\lesssim \frac{\la_*^{\nu_2}}{|(r,z)-\xi(t)|^2}\chi_{\{\la_* R\leq|(r,z)-\xi(t)|\leq 2\delta\sqrt{T-t}\}},$$
where $\nu_2\in(0,1)$, $\delta>0$.
Then
\begin{equation}\label{outer-rhs2}
|\psi(x,t)|\lesssim  \la_*^{\nu_2-1}(0)R^{-2}(0)|\log T|,
\end{equation}
\begin{equation}\label{outerT-rhs2}
|\psi(x,t)-\psi(x,T)|\lesssim  \la_*^{\nu_2-1}(t)R^{-2}(t)|\log (T-t)|,
\end{equation}
\begin{equation}\label{outergradient-rhs2}
|\nabla \psi(x,t)|\lesssim \la_*^{\nu_2-2}(t)R^{-2}(t)\sqrt{t},
\end{equation}
\begin{equation}\label{outergradientT-rhs2}
|\nabla \psi(x,t)-\nabla \psi(x,T)|\lesssim \la_*^{\nu_2-\frac32}(t)R^{-2}(t)|\log (T-t)|,
\end{equation}
and
\begin{equation}\label{outerholder-rhs2}
|\psi(x,t_2)-\psi(x,t_1)|\lesssim \la_*^{\nu_2-1-\gamma}(t_2) R^{-2}(t_2) (t_2-t_1)^{\gamma},
\end{equation}
where $0\leq t_1\leq t_2\leq T$ with $t_2-t_1\leq \frac{1}{10}(T-t_2)$ and $\gamma\in(0,1)$.
\end{lemma}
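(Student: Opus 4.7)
The plan is to invoke Duhamel's formula in $\R^n$ coupled with the symmetry-induced reduction to the $(r,z)$ variables. First I would write
\begin{equation*}
\psi(x,t)=\int_0^t\int_{\R^n}\Phi_{t-s}(x-y)\,f(y,s)\,dy\,ds,
\end{equation*}
where $\Phi_\tau$ is the standard Gaussian heat kernel in $\R^n$. Because both $f$ and $\psi$ lie in the symmetry class \eqref{symmetryclass}, integrating the kernel over the $SO(n-3)$-orbits in $y^*$ produces an effective reduced kernel $K_{t-s}((r,z),(r_y,z_y))$ on $\R^+\times\R^3$, which carries an extra Jacobian factor $r_y^{n-4}$ and which is essentially Gaussian in $((r,z)-(r_y,z_y))$ at short times. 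In this reduced picture, the singular set $\{(r_y,z_y)=\xi(s)\}$ of the right-hand side corresponds, back in $\R^n$, to an $(n-4)$-dimensional sphere of radius $\xi_r(s)$, and the integrand has an integrable $|\cdot|^{-2}$ singularity in the four normal directions to this sphere.

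Second, I would establish the key pointwise bound
\begin{equation*}
\int_{\R^n}\Phi_{t-s}(x-y)\,\frac{\chi_{A(s)}(r_y,z_y)}{|(r_y,z_y)-\xi(s)|^2}\,dy\lesssim\frac{1}{\max(|(r,z)-\xi(s)|^2,\,t-s)},
\end{equation*}
with $A(s)=\{\la_*(s)R(s)\le|(r_y,z_y)-\xi(s)|\le 2\delta\sqrt{T-s}\}$. This is the classical heat-semigroup bound $e^{\tau\Delta}(|\cdot|^{-2})\lesssim(|\cdot|^2+\tau)^{-1}$ adapted to the moving $(n-4)$-sphere singularity (four normal dimensions, so $2<4$ and the bound survives). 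Inserting this into the time integral, for \eqref{outer-rhs2} I would split $s$ at $t-(T-t)/2$: on the near-final-time piece $\la_*(s)\sim\la_*(t)$, and the time integral $\int\la_*^{\nu_2}(s)/(t-s)\,ds$ with lower effective cutoff $\sim\la_*^2(t)R^2(t)$ produces the logarithmic factor, while the scaling $\la_*^{\nu_2-1}(0)R^{-2}(0)|\log T|$ comes from the dominating choice of time together with the relation $\la_*(0)|\log T|\sim T$ and $R=\la_*^{-\beta}$. The difference estimate \eqref{outerT-rhs2} follows by running the same argument starting at time $t$ rather than $0$.

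Third, for the gradient bounds \eqref{outergradient-rhs2}--\eqref{outergradientT-rhs2} I would differentiate the Duhamel formula and use $|\nabla_x\Phi_{t-s}|\lesssim(t-s)^{-1/2}\Phi_{t-s}$, which gains a factor $(t-s)^{-1/2}$ in the time integral and accounts for the extra half-power of $\la_*$. For the H\"older bound \eqref{outerholder-rhs2}, I would write $\psi(x,t_2)-\psi(x,t_1)$ as the sum of a short-time piece $\int_{t_1}^{t_2}(\cdots)ds$ estimated directly, and a regular piece $\int_0^{t_1}[\Phi_{t_2-s}-\Phi_{t_1-s}]f\,ds$ estimated by interpolating between $(t_2-t_1)^\gamma$ and standard heat-kernel smoothness of order one in time, under the restriction $t_2-t_1\le\tfrac{1}{10}(T-t_2)$ which keeps $\la_*(s)\sim\la_*(t_2)$ in the relevant range. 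The main obstacle will be the key pointwise bound in the second step: one must check that the degeneration of the $(n-4)$-sphere to a point as $\xi_r(s)\searrow 0$ does not spoil uniformity, in particular that when $\xi_r(s)$ becomes smaller than $\sqrt{t-s}$ the effective normal-dimension counting still delivers the right $(\max(|(r,z)-\xi|^2,t-s))^{-1}$ behavior, and that the cutoff in $A(s)$ prevents any loss from the inner scale.
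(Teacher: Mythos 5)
Your overall route---Duhamel's formula for the heat kernel in $\R^n$ and exploitation of the cylindrical symmetry---coincides with the paper's, but the key intermediate bound you introduce is genuinely different from (and more delicate than) what the paper actually uses, and a couple of your steps have gaps that would need to be filled.

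The paper's treatment of \eqref{outer-rhs2} and \eqref{outergradient-rhs2} is much cruder than your proposed heat-semigroup smoothing estimate: after rescaling $\tilde w = w/\sqrt{t-s}$ it simply bounds the integrand by its supremum on the support of $f$, using only the inner cutoff $|(r_y,z_y)-\xi(s)|\geq \la_*(s)R(s)$ to get
$\int_{\R^n}\Phi_{t-s}(x-y)\,\tfrac{\chi_{A(s)}}{|(r_y,z_y)-\xi(s)|^2}\,dy\lesssim \frac{1}{\la_*^2(s)R^2(s)}$,
and then the remaining time integral $\int_0^t\la_*^{\nu_2-2+2\beta}(s)\,ds$ yields the stated bound. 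No pointwise bound of the type $e^{\tau\Delta}(|\cdot|^{-2})\lesssim(|\cdot|^2+\tau)^{-1}$ is ever invoked: the geometry of the $(n-4)$-sphere and the number of normal directions play no role at this level, precisely because the cutoff handles the short-distance divergence. Your extra structure is thus not needed for these estimates, though it would also work if carried out carefully.

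Two concrete gaps. First, the "key pointwise bound" as you wrote it---$\lesssim \max(|(r,z)-\xi(s)|^2,\,t-s)^{-1}$---does not record the cutoff scale $\la_*(s)R(s)$, and without it the near-$t$ part of the time integral $\int_0^t\la_*^{\nu_2}(s)\max(\cdot,t-s)^{-1}ds$ diverges logarithmically when $x$ is near the sphere. You do remark at the end that the cutoff prevents loss from the inner scale, but you then need to insert it into the bound (i.e., $\lesssim\max(|(r,z)-\xi(s)|^2,\,\la_*^2R^2,\,t-s)^{-1}$); leaving it out is an actual hole in the argument. Second, your concern about $\xi_r(s)\searrow 0$ spoiling the normal-direction count is a red herring: since $\xi_r(s)\sim\sqrt{T-s}$ and $t-s\leq T-s$, the sphere radius always dominates the parabolic scale $\sqrt{t-s}$ in the relevant range, so the degeneration never enters.

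The H\"older estimate \eqref{outerholder-rhs2} is the part of this lemma that genuinely needs finer work. Your plan to estimate the short-time piece $\int_{t_1}^{t_2}$ "directly" would, with the crude bound above, give $K_{13}\lesssim\la_*^{\nu_2-2}(t_2)R^{-2}(t_2)(t_2-t_1)$; checking against the target $\la_*^{\nu_2-1-\gamma}(t_2)R^{-2}(t_2)(t_2-t_1)^\gamma$ under the hypothesis $t_2-t_1\leq\tfrac{1}{10}(T-t_2)$ (which only forces $t_2-t_1\lesssim T-t_2$, \emph{not} $t_2-t_1\lesssim\la_*(t_2)$) shows this loses a factor $|\log(T-t_2)|^{2(1-\gamma)}$. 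The paper closes this gap by an additional decomposition of the spatial integral into two regions $A_1$ (Gaussian variable small) and $A_2$ (Gaussian variable large), with auxiliary exponents $\delta_3,\delta_4,\delta_5$ tuned so that the two resulting terms both match $(t_2-t_1)^\gamma$ precisely; that mechanism is not visible in your sketch and would need to appear.
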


\begin{lemma}\label{lemma-rhs3}
Let $\psi$ solve problem \eqref{outer-linear''} with right hand side
$$|f(x,t)|\lesssim 1.$$
Then
\begin{equation}\label{outer-rhs3}
|\psi(x,t)|\lesssim t,
\end{equation}
\begin{equation}\label{outerT-rhs3}
|\psi(x,t)|\lesssim (T-t)|\log(T-t)|,
\end{equation}
\begin{equation}\label{outergradient-rhs3}
|\nabla\psi(x,t)|\lesssim T^{1/2},
\end{equation}
\begin{equation}\label{outergradientT-rhs3}
|\nabla\psi(x,t)-\nabla\psi(x,T)|\lesssim (T-t)^{1/2},
\end{equation}
and
\begin{equation}\label{outerholder-rhs3}
|\psi(x,t_2)-\psi(x,t_1)|\lesssim (t_2-t_1)|\log (t_2-t_1)|,
\end{equation}
where $0\leq t_1\leq t_2\leq T$ with $t_2-t_1\leq \frac{1}{10}(T-t_2)$.
\end{lemma}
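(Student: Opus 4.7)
The plan is to prove Lemma~\ref{lemma-rhs3} via the Duhamel representation
\[
\psi(x,t)=\int_0^t e^{(t-s)\Delta_{\R^n}}f(\cdot,s)(x)\,ds,
\]
where $e^{\tau\Delta_{\R^n}}$ is the Dirichlet heat semigroup on $\Omega\subset\R^n$, whose kernel is pointwise dominated by the free heat kernel on $\R^n$; all pointwise bounds below thus reduce to estimates on the free Gaussian. Since $\|f(\cdot,s)\|_\infty\lesssim 1$, the maximum principle $\|e^{\tau\Delta}g\|_\infty\le \|g\|_\infty$ instantly delivers \eqref{outer-rhs3}, $|\psi(x,t)|\le\int_0^t 1\,ds=t$, and the smoothing estimate $\|\nabla e^{\tau\Delta}g\|_\infty\lesssim \tau^{-1/2}\|g\|_\infty$ gives \eqref{outergradient-rhs3}, $|\nabla\psi(x,t)|\lesssim \int_0^t(t-s)^{-1/2}\,ds\lesssim T^{1/2}$.

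For \eqref{outerT-rhs3} and \eqref{outergradientT-rhs3}, which---matching the norm $\|\cdot\|_*$ in \eqref{def-norm*}---I read as bounds on $|\psi(x,t)-\psi(x,T)|$ and $|\nabla\psi(x,t)-\nabla\psi(x,T)|$, I decompose
\[
\psi(x,T)-\psi(x,t)=\int_t^T e^{(T-s)\Delta}f(\cdot,s)(x)\,ds+\int_0^t\bigl[e^{(T-s)\Delta}-e^{(t-s)\Delta}\bigr]f(\cdot,s)(x)\,ds.
\]
The first piece is immediately $O(T-t)$, and $O((T-t)^{1/2})$ in the gradient version via the $\tau^{-1/2}$ smoothing. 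For the second piece I split $s\in[0,t]$ into a \emph{near} region $s\in[\max(0,2t-T),t]$, where the triangle inequality combined with the direct semigroup bounds yields $O(T-t)$ and $O((T-t)^{1/2})$ respectively, and a \emph{far} region $s\in[0,\max(0,2t-T)]$, where I apply the identity
\[
e^{(T-s)\Delta}-e^{(t-s)\Delta}=\int_t^T \Delta\,e^{(\tau-s)\Delta}\,d\tau
\]
together with $\|\Delta e^{\sigma\Delta}\|_{L^\infty\to L^\infty}\lesssim 1/\sigma$ (and $\|\nabla\Delta e^{\sigma\Delta}\|_{L^\infty\to L^\infty}\lesssim \sigma^{-3/2}$ for the gradient). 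The far-region contributions then integrate to $(T-t)\log(t/(T-t))\lesssim (T-t)|\log(T-t)|$ in $L^\infty$ and to $(T-t)\int_0^{2t-T}(t-s)^{-3/2}\,ds\lesssim (T-t)^{1/2}$ for the gradient, completing both estimates.

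The H\"older-in-time bound \eqref{outerholder-rhs3} is obtained by the same near/far split with $t_2,t_1$ in place of $T,t$; the hypothesis $t_2-t_1\le (T-t_2)/10$ together with $T\ll 1$ lets one absorb the resulting factor $\log(t_1/(t_2-t_1))$ into $|\log(t_2-t_1)|$, producing the stated bound.

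The only non-automatic point in this plan, and the source of the logarithm in \eqref{outerT-rhs3} and \eqref{outerholder-rhs3}, is the standard logarithmic failure of $L^\infty\to L^\infty$ parabolic maximal regularity: $\|\Delta e^{\sigma\Delta}\|\lesssim 1/\sigma$ is only logarithmically integrable in $\sigma$, so no uniform bound on $\Delta\psi$ is available from $f\in L^\infty$ alone. The near/far split is what confines the loss to a single factor $|\log(T-t)|$ rather than something worse, without invoking any H\"older hypothesis on $f$.
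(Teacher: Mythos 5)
Your proof is correct and follows essentially the same route as the paper: Duhamel's formula for the heat semigroup, the $\|\nabla e^{\tau\Delta}\|_{L^\infty\to L^\infty}\lesssim\tau^{-1/2}$ smoothing estimate, and a near/far split in time using $e^{(T-s)\Delta}-e^{(t-s)\Delta}=\int_t^T \Delta e^{(\tau-s)\Delta}\,d\tau$ together with $\|\Delta e^{\sigma\Delta}\|_{L^\infty\to L^\infty}\lesssim\sigma^{-1}$, which is precisely the paper's $\partial_t G$ argument written in semigroup language. You also correctly read \eqref{outerT-rhs3} as a bound on $|\psi(x,t)-\psi(x,T)|$ (matching the definition of $\|\cdot\|_*$), consistent with Lemmas \ref{lemma-rhs1}--\ref{lemma-rhs2}.
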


\medskip

\begin{proof}[Proof of Proposition \ref{outer-apriori}]
We denote $\psi[\mathbf {f}_{\rm out}]$ by the solution to problem \eqref{outer-linear''} with the right hand side $\mathbf {f}_{\rm out}$ satisfying $\|\mathbf {f}_{\rm out}\|_{**}<+\infty$. Decompose
$$\mathbf {f}_{\rm out}=\sum\limits_{i=1}^3 f_i~\mbox{ with }~|f_i|\lesssim \varrho_i\|f_i\|_{**}.$$
Let $1-\frac{\mu}{2}=\gamma$ in Lemma \ref{lemma-rhs1}. Then by the linearity, \eqref{est-outer-apriori} follows from Lemma \ref{lemma-rhs1}, Lemma \ref{lemma-rhs2} and Lemma \ref{lemma-rhs3}.
\end{proof}

The proofs of Lemma \ref{lemma-rhs1}, Lemma \ref{lemma-rhs2} and Lemma \ref{lemma-rhs3} are postponed to the Appendix.

\medskip


\section{Linear theory for the inner problem}\label{sec-linearinner}

\medskip

In this section, we develop a linear theory concerning the estimates for the associated linear problem of the inner problem under certain topology.

In order to solve the inner problem \eqref{inner}, we consider the associated linear problem
\begin{equation}\label{eqn-li}
\la^{2} \phi_t=\Delta_y \phi+3U^{2}(y) \phi+h(y,t)~\mbox{ in }~\mathcal D_{2R}\times (0,T).
\end{equation}
Recall that the linearized operator $L_0=\Delta +3U^{2}$
has only one positive eigenvalue $\mu_0$ such that
$$L_0(Z_0)=\mu_0 Z_0,~Z_0\in L^{\infty}(\R^4),$$
where the corresponding eigenfunction $Z_0$ is radially symmetric with the asymptotic behavior
\begin{equation}\label{decay-Z_0}
Z_0(y)\sim |y|^{-3/2}e^{-\sqrt{\mu_0}|y|}~\mbox{ as }~|y|\to+\infty.
\end{equation}
Multiplying equation \eqref{eqn-li} by $Z_0$ and integrating over $\R^4$, we obtain that
$$\la^2(t)\dot p(t)-\mu_0 p(t)=q(t),$$
where
$$p(t)=\int_{\R^4} \phi(y,t)Z_0(y) dy~\mbox{ and }~q(t)=\int_{\R^4} h(y,t)Z_0(y) dy.$$
Then we have
$$p(t)=e^{\int_0^t \mu_0\la^{-2}(r) dr}\left(p(0)+\int_0^t q(\eta)\la^{-2}(\eta)e^{-\int_0^{\eta} \mu_0\la^{-2}(r)dr}d\eta\right).$$
In order to get a decaying solution, the initial condition
$$p(0)=-\int_0^T q(\eta)\la^{-2}(\eta)e^{-\int_0^{\eta} \mu_0\la^{-2}(r)dr}d\eta$$
is required. The above formal argument suggests that a linear constraint should be imposed on the initial value $\phi(y,0)$. Therefore, we consider the associated linear Cauchy problem of the inner problem \eqref{inner}
\begin{equation}\label{linear-inner'''''}
\begin{cases}
  \la^2\phi_{t}=\Delta_y\phi + 3U^{2}(y)\phi + h(y,t), & \mbox{ in } \mathcal D_{2R}\times (0,T) \\
  \phi(y,0)=e_0 Z_0(y), & \mbox{ in } \mathcal D_{2R(0)}
\end{cases}
\end{equation}
where $R=R(t)=\la_*^{-\beta}(t)$ for $\beta\in (0,1/2)$.
On the other hand, the parabolic operator $-\la^2\partial_t+L_0$ is certainly not invertible since all the time independent elements in the 5 dimensional kernel of $L_0$ (see \eqref{kernels}) also belong to the kernel of $-\la^2\partial_t+L_0$. In order to construct solution to \eqref{linear-inner'''''} with suitable space-time decay, some orthogonality conditions are expected to hold. So we consider the projected problem
\begin{equation}\label{linear-inner}
\begin{cases}
  \la^2\phi_{t}=\Delta_y\phi + 3U^{2}(y)\phi + h(y,t)+\tilde c^0(t)Z_5(y)+\sum\limits_{\ell=1}^4 c^{\ell}(t) Z_{\ell}(y), & \mbox{ in } \mathcal D_{2R}\times (0,T), \\
  \phi(y,0)=e_0 Z_0(y), & \mbox{ in } \mathcal D_{2R(0)}.
\end{cases}
\end{equation}

Our aim is to find suitable solution to problem \eqref{linear-inner} with space-time decay of the following type
\begin{equation}\label{def-normnua}
\|\phi\|_{\nu,a}:=\sup_{\substack{y\in \mathcal D_{2R} \\ t\in(0,T)}} \la_*^{-\nu}(t)(1+|y|^a)\left[|\phi(y,t)|+(1+|y|)|\nabla \phi(y,t)|\right],
\end{equation}
and the right hand side of problem \eqref{linear-inner}
\begin{equation}
\|h\|_{\nu,a}:=\sup_{\substack{y\in \mathcal D_{2R} \\ t\in(0,T)}} \la_*^{-\nu}(t)(1+|y|^a)|h(y,t)|.
\end{equation}
The construction of such solution is carried out by decomposing the equation into different spherical harmonic modes. Let an orthonormal basis $\{\Theta_i\}_{i=0}^{\infty}$ made up of spherical harmonics in $L^2(\mathbb{S}^{3})$, i.e.
$$\Delta_{\mathbb{S}^{3}}\Theta_i+\tilde\mu_i \Theta_i=0 ~\mbox{ in } ~\mathbb{S}^{3}$$
with
$$0=\tilde\mu_0<\tilde\mu_1=\cdots=\tilde\mu_4=3<\tilde\mu_5\leq\cdots.$$
More precisely, $\Theta_0(y)=c_0,~\Theta_i(y)=c_1y_j,~j=1,\cdots,4$ for two constants $c_0$, $c_1$ and $\tilde\mu_i$ takes the general form $i(2+i)$ with multiplicity $\frac{(3+i)!}{6i!}$ for $i\geq 0$.

For $h\in L^2(\mathcal D_{2R})$, we decompose it into
\begin{equation*}
h(y,t)=\sum\limits_{j=0}^{\infty} h_j(r,t)\Theta_j(y/r),\quad r=|y|,\quad h_j(r,t)=\int_{\mathbb{S}^{3}} h(r\theta,t)\Theta_j(\theta)d\theta
\end{equation*}
and write $h=h^0+h^1+h^{\perp}$ with
$$h^0=h_0(r,t),\quad h^1=\sum\limits_{j=1}^4 h_j(r,t)\Theta_j,\quad h^{\perp}=\sum\limits_{j=5}^{\infty} h_j(r,t)\Theta_j.$$
Also, we decompose $\phi=\phi^0+\phi^1+\phi^{\perp}$ in a similar form. Then finding a solution to problem \eqref{linear-inner} is equivalent to finding the pairs $(\phi^0,h^0),~(\phi^1,h^1),~(\phi^{\perp},h^{\perp})$ in each mode.

The main result of this section is stated as follows.
\begin{prop}\label{lineartheory}
Let constants $a,\nu,\nu_1,\sigma\in(0,1)$ and $a_1\in(1,2)$. For $T>0$ sufficiently small and any $h(y,t)$ satisfying $\|h^0\|_{\nu,2+a}<+\infty$, $\|h^1\|_{\nu_1,2+a_1}<+\infty$, $\|h^{\perp}\|_{\nu,2+a}<+\infty$, there exists a solution $(\phi,\tilde c^0,c^{\ell},e_0)$ solving \eqref{linear-inner} and   $(\phi,\tilde c^0,c^{\ell},e_0)=(\phi[h],\tilde c^0[h^0],c^{\ell}[h^1],e_0[h])$ defines a linear operator of $h$ that satisfies the estimates
\begin{itemize}
\item For $|y|\leq 2R^{\sigma}$,
\begin{equation}\label{inner-apriori1}
\begin{aligned}
|\phi(y,t)|+(1+|y|)|\nabla \phi(y,t)|\lesssim&~ \Bigg[\frac{\la_*^{\nu}(t)R^{\sigma(4-a)}\log R}{1+|y|^4}\|h^0\|_{\nu,2+a}+\frac{\la_*^{\nu_1}(t)}{1+|y|^{a_1}}\|h^1\|_{\nu_1,2+a_1}\\
&~+\frac{\la_*^{\nu}(t)}{1+|y|^a}\|h^{\perp}\|_{\nu,2+a}\Bigg]
\end{aligned}
\end{equation}

\item For $2R^{\sigma}\leq |y| \leq 2R$,
\begin{equation}\label{inner-apriori2}
\begin{aligned}
|\phi(y,t)|+(1+|y|)|\nabla \phi(y,t)|\lesssim&~ \Bigg[\frac{\la_*^{\nu}(t)\log R}{1+|y|^a}\|h^0\|_{\nu,2+a}+\frac{\la_*^{\nu_1}(t)}{1+|y|^{a_1}}\|h^1\|_{\nu_1,2+a_1}\\
&~+\frac{\la_*^{\nu}(t)}{1+|y|^a}\|h^{\perp}\|_{\nu,2+a}\Bigg]
\end{aligned}
\end{equation}
\end{itemize}
\begin{equation}\label{def-cell}
\tilde c^0(t)=-\frac{\int_{\mathcal D_{2R_*}}h^0 Z_5 dy}{\int_{\mathcal D_{2R_*}}|Z_5|^2 dy}-\mathcal O[h^0],~~c^{\ell}(t)=-\frac{\int_{\mathcal D_{2R}}h^1 Z_{\ell} dy}{\int_{\mathcal D_{2R}}|Z_{\ell}|^2 dy}~\mbox{ for }~\ell=1,\cdots,4,
\end{equation}
where $R_*=R^{\sigma}$ and $\mathcal O[h^0]$ is a linear operator of $h^0$ satisfying
$$|\mathcal O[h^0]|\lesssim \la_*^{\nu} R_*^{a'-a}\log R\|h^0\|_{\nu,2+a}$$
for $a'\in(0,a)$. Moreover,
\begin{equation*}
|e_0[h]|\lesssim \la_*^{\nu}\left(\|h^0\|_{\nu,2+a}+\|h^1\|_{\nu_1,2+a_1}+\|h^{\perp}\|_{\nu,2+a}\right).
\end{equation*}
\end{prop}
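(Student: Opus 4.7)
I would decompose both the data and the unknown in spherical harmonics, writing $h = h^0 + h^1 + h^\perp$ and $\phi = \phi^0 + \phi^1 + \phi^\perp$. Because the bounded kernel of $L_0$ splits cleanly as $Z_5$ in mode $0$ and $Z_1,\ldots,Z_4$ in mode $1$ while modes $j\ge 2$ are kernel--free, problem \eqref{linear-inner} reduces to three decoupled scalar projected problems. The multipliers $\tilde c^0(t)$ and $c^\ell(t)$ in \eqref{def-cell} are the natural solvability conditions, obtained by testing each mode against its kernel. The initial constant $e_0$ is forced by projecting the mode--$0$ equation onto the unstable eigenfunction $Z_0$: the scalar ODE $\la^2\dot p - \mu_0 p = q$ derived just before \eqref{linear-inner'''''} admits a bounded solution only if $p(0)$ equals the explicit integral given there, which defines $e_0[h]$ as a linear functional of $h$. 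The bound $|e_0|\lesssim \la_*^\nu(\|h^0\|_{\nu,2+a} + \|h^1\|_{\nu_1,2+a_1} + \|h^\perp\|_{\nu,2+a})$ will then follow from the exponential decay \eqref{decay-Z_0} of $Z_0$ combined with the weighted control on $h$.

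\textbf{Higher modes.} On $\phi^\perp$ the operator $L_0$ is coercive since all bounded kernel elements have been removed. Using the pointwise bound $|h^\perp|\lesssim \la_*^\nu(1+|y|)^{-2-a}\|h^\perp\|_{\nu,2+a}$, the a priori estimate $|\phi^\perp|\lesssim \la_*^\nu (1+|y|)^{-a}\|h^\perp\|_{\nu,2+a}$ is obtained by constructing the barrier $\bar\phi(y) = C\la_*^\nu(1+|y|)^{-a}$ and invoking a parabolic comparison principle on $\mathcal D_{2R}\times(0,T)$; the weighted gradient bound comes from interior parabolic regularity on balls of radius $\sim 1+|y|$.

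\textbf{Translation mode.} For $\phi^1$, I would adapt the blow-up argument of \cite{17HMF}. After fixing $c^\ell(t)$ by \eqref{def-cell} so that $\int h^1 Z_\ell\,dy$ vanishes on $\mathcal D_{2R}$, I would argue by contradiction: take sequences with $\|h^1_n\|_{\nu_1,2+a_1}\to 0$ but $\|\phi^1_n\|_{\nu_1,a_1}=1$, rescale around points $(y_n,t_n)$ realizing the supremum, and pass to the limit. If the centers $y_n$ stay bounded, the limit solves $L_0\phi=0$ in mode $1$ and lies in $\mathrm{span}\{Z_1,\ldots,Z_4\}$; but orthogonality to each $Z_\ell$ is preserved under the rescaling, forcing the limit to vanish and contradicting normalization. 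The case of escaping centers is ruled out by the decay $Z_\ell\sim|y|^{-3}$ at infinity. The restriction $a_1\in(1,2)$ is precisely what makes the testing integrals $\int h^1 Z_\ell\,dy$ absolutely convergent and the residual error terms under rescaling controllable.

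\textbf{Scaling mode.} This is the main obstacle. Because $Z_5\sim|y|^{-2}$ at infinity, a direct inversion of $L_0$ in mode $0$ against a right-hand side of slow decay $\sim|y|^{-2-a}$ loses decay, and the faster $(1+|y|^4)^{-1}$ rate needed on the inner region to close the fixed-point argument in Section \ref{sec-innerouter} cannot be obtained without additional structure. My plan is to perform a second inner--outer gluing \emph{within} the mode--$0$ problem itself, at cutoff radius $R_* = R^\sigma$ with $\sigma\in(0,1)$: write $\phi^0 = \eta_{R_*}\phi^0_{\mathrm{in}} + \phi^0_{\mathrm{out}}$. The inner piece $\phi^0_{\mathrm{in}}$ solves a projected problem on $\mathcal D_{2R_*}$ whose solvability is enforced by $\tilde c^0(t)$; inverting $L_0$ in mode $0$ via the explicit Green's formula (in the spirit of \cite{Green16JEMS}) yields the improved $(1+|y|^4)^{-1}\log R_*$ decay inside, at the cost of the multiplicative factor $R^{\sigma(4-a)}$ appearing in \eqref{inner-apriori1}. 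The outer piece $\phi^0_{\mathrm{out}}$ absorbs the gluing error, which is concentrated in the annulus $\{R_*\le|y|\le 2R_*\}$ and of size $O(R_*^{-a}\|h^0\|_{\nu,2+a})$, and is produced by the standard weighted parabolic theory giving the global $(1+|y|^a)^{-1}\log R$ bound of \eqref{inner-apriori2}. The correction $\mathcal O[h^0]$ in \eqref{def-cell} records the mismatch between testing $h^0$ against $Z_5$ on $\mathcal D_{2R_*}$ versus $\mathcal D_{2R}$, and its size $\la_*^\nu R_*^{a'-a}\log R\,\|h^0\|_{\nu,2+a}$ reflects the slow tail of $Z_5$. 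A Schauder fixed point argument in the combined norm closes the scheme, and recombining the three modes delivers the required linear operator $(\phi[h],\tilde c^0[h^0],c^\ell[h^1],e_0[h])$.
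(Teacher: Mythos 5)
Your proposal follows the paper's proof essentially step by step: the spherical-harmonic splitting, the secondary inner--outer gluing at radius $R_*=R^\sigma$ for mode $0$ via the Green's-function inversion of Proposition \ref{modelpropmode0}, the compactness/blow-up contradiction for modes $1$--$4$ (with the supersolution $(\rho^2+c\tau)^{-a_1/2}+\epsilon\rho^{-2}$ for the free heat equation excluding escaping centers, which is exactly where $a_1<2$ is used), and the removal of the unstable direction by choosing $e_0$ from the scalar ODE for $p(t)=\int\phi Z_0$. The only minor inaccuracy is the source you assign to $\mathcal O[h^0]$: in the paper it is the contribution of the coupling term $3U^2(y)\,\phi^0_{\mathrm{out}}[\phi^0_{\mathrm{in}},h^0]$ when testing the inner mode-$0$ problem against $Z_5$ over $\mathcal D_{2R_*}$, not a mismatch between the integration domains $\mathcal D_{2R_*}$ and $\mathcal D_{2R}$.
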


We devote the rest of this section to proving Proposition \ref{lineartheory}. Our strategy is to construct $\phi=\phi^0+\phi^1+\phi^{\perp}$ mode by mode.

\medskip

\noindent {\bf 1. Construction at mode $0$.}

\medskip

We construct $\phi^0$ solving the linearized problem at mode $0$
\begin{equation}\label{eqnmode0'''}
\begin{cases}
  \la^2\phi^0_{t}=\Delta_y\phi^0 + 3U^{2}(y)\phi^0 + h^0(y,t)+\tilde c^0(t)Z_5(y), & \mbox{ in } \mathcal D_{2R}\times (0,T), \\
  \phi^0(y,0)=e_0 Z_0(y), & \mbox{ in } \mathcal D_{2R(0)}.
\end{cases}
\end{equation}

The main result for mode $0$ is the following

\begin{prop}\label{propmode0}
Let $\nu,a,\sigma\in(0,1)$. Suppose $\|h^0\|_{\nu,2+a}<+\infty$. Then there exists a solution $(\phi^0,\tilde c^0,e_0)$ to problem \eqref{eqnmode0'''}, which depends on $h^0$ linearly such that
\begin{equation*}
|\phi^0(y,t)|+(1+|y|)|\nabla \phi^0(y,t)|\lesssim \la_*^{\nu}\log R\|h^0\|_{\nu,2+a}\begin{cases}
\frac{R^{\sigma(4-a)}}{1+|y|^4},~&\mbox{ for }~|y|\leq 2R^{\sigma},\\
\frac{1}{1+|y|^a},~&\mbox{ for }~2R^{\sigma}\leq|y|\leq 2R,\\
\end{cases}
\end{equation*}
\begin{equation*}
\tilde c^0[h^0](t)=-\frac{\int_{\mathcal D_{2R_*}}h^0 Z_5 dy}{\int_{\mathcal D_{2R_*}}|Z_5|^2 dy}-\mathcal O[h^0],
\end{equation*}
where $\mathcal O[h^0]$ is a linear operator of $h^0$ satisfying
$$|\mathcal O[h^0]|\lesssim \la_*^{\nu} R_*^{a'-a}\log R\|h^0\|_{\nu,2+a}$$
for $a'\in(0,a).$ Futhermore, it holds that
$$|e_0[h^0]|\lesssim \la_*^{\nu}\|h^0\|_{\nu,2+a}.$$
\end{prop}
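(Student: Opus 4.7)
The plan is to mimic the inner--outer gluing that appears in the full nonlinear construction, but now at the \emph{linear} scaling--mode level. Decompose $\phi^0=\phi^0_{out}+\eta_{R_*}\phi^0_{in}$ with $R_*=R^\sigma$ and $\eta_{R_*}:=\eta(|y|/R_*)$, and substitute into \eqref{eqnmode0'''}. This produces a coupled system: an \emph{outer} equation on $\mathcal D_{2R}\times(0,T)$ whose forcing is $h^0+\tilde c^0 Z_5$ plus commutator terms $\la^{-2}\bigl[(\Delta\eta_{R_*})\phi^0_{in}+2\nabla\eta_{R_*}\cdot\nabla\phi^0_{in}-\la^2\phi^0_{in}\partial_t\eta_{R_*}\bigr]$ supported in the annulus $R_*\le|y|\le 2R_*$; and an \emph{inner} equation on the ball $\mathcal D_{2R_*}\times(0,T)$ of the form $\la^2\partial_t\phi^0_{in}=L_0\phi^0_{in}+h^0+\tilde c^0 Z_5+3U^2\phi^0_{out}$. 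The parameters $\tilde c^0(t)$ and $e_0$ will be chosen to guarantee solvability of the inner problem in the decaying class.

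For the inner problem, the radial part of $\ker L_0$ is spanned by $Z_5$, so we enforce the orthogonality
\[
\int_{\mathcal D_{2R_*}}\bigl(h^0+\tilde c^0 Z_5+3U^2\phi^0_{out}\bigr)Z_5\,dy=0,
\]
which uniquely determines $\tilde c^0(t)$ in the form stated in \eqref{def-cell}, with the correction $\mathcal O[h^0]$ absorbing the contribution of $3U^2\phi^0_{out}$; its size $R_*^{a'-a}\log R$ is dictated by the outer a priori bound restricted to $\mathcal D_{2R_*}$. Since $L_0$ also carries a positive eigenvalue $\mu_0$ with eigenfunction $Z_0$ exponentially localised by \eqref{decay-Z_0}, testing the inner equation against $Z_0$ produces the scalar ODE $\la^2\dot p-\mu_0 p=q$ for $p(t)=\int\phi^0_{in}Z_0\,dy$, and a bounded solution forces the unique initial value $p(0)=-\int_0^T q(\eta)\la^{-2}(\eta)e^{-\int_0^\eta\mu_0\la^{-2}(r)\,dr}d\eta$ derived in the excerpt; this defines $e_0$ linearly in $h^0$ and produces the stated bound through the exponential decay of $Z_0$ together with the smallness of $\la$. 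With both constraints imposed, inversion of $L_0$ on the complement of $\mathrm{span}\{Z_5,Z_0\}$ in the weighted $L^\infty$ space with weight $(1+|y|)^{-4}$ yields an inner profile whose decay matches that of $Z_5$, essentially by the radial ODE/duality argument already developed in \cite{17HMF,Green16JEMS}.

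The outer piece is handled by parabolic barriers: since $3U^2=O(|y|^{-4})$, the radial operator differs from the heat operator only by a short--range potential on scales $|y|\gtrsim 1$, and $\la_*^\nu(1+|y|)^{-a}$ serves, up to the logarithmic factor generated by Duhamel accumulation over the characteristic time $\la^2 R^2$, as a supersolution against the weighted forcing of size $\la_*^\nu(1+|y|)^{-2-a}$. A contraction mapping in the joint norm---$\|\cdot\|_{\nu,a}$ for $\phi^0_{out}$ and a norm with weight $R_*^{-(4-a)\sigma}(1+|y|)^4$ for $\phi^0_{in}$---then closes the coupled system and gives the piecewise estimate \eqref{inner-apriori1}--\eqref{inner-apriori2}. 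The main obstacle is the matching across $|y|\sim R_*$: the commutator terms in the outer forcing, which are the only source of coupling, have to be absorbable by the outer a priori estimate while the remainder $\mathcal O[h^0]$ in the identity for $\tilde c^0$ must retain the declared size $\la_*^\nu R_*^{a'-a}\log R$; it is precisely this balance that forces the choice $R_*=R^\sigma$ with $\sigma\in(0,1)$ and accounts for the transition between the $(1+|y|)^{-4}$ and $(1+|y|)^{-a}$ decay rates.
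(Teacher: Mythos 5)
Your high-level architecture matches the paper's: the decomposition $\phi^0=\phi^0_{out}+\eta_{R_*}\phi^0_{in}$ with $R_*=R^\sigma$, the determination of $\tilde c^0$ through orthogonality against $Z_5$ on $\mathcal D_{2R_*}$ with $\mathcal O[h^0]$ absorbing the $3U^2\phi^0_{out}$ contribution, the one-dimensional positive-eigenvalue constraint giving $e_0$, and a contraction in a joint norm. Two points, however, are genuinely problematic.

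First, your outer barrier argument as stated does not close. You write that "$3U^2=O(|y|^{-4})$, the radial operator differs from the heat operator only by a short-range potential, and $\la_*^\nu(1+|y|)^{-a}$ serves as a supersolution." But the operator $L_0=\Delta+3U^2$ carries a positive eigenvalue $\mu_0$ and a five-dimensional kernel, so $-\la^2\partial_t+L_0$ has no maximum principle on $\mathcal D_{2R}$, and no simple power barrier can dominate it. The decay of $U^2$ at infinity is irrelevant to the obstruction, which lives at $|y|\sim 1$. What actually makes the outer problem tractable is that the potential there is not $3U^2$ but $3(1-\eta_{R_*})U^2$, which is supported in $|y|\ge R_*$ and is $O(R_*^{-4})$ there; only then is $-\la^2\partial_t+\Delta+3(1-\eta_{R_*})U^2$ a small perturbation of the heat operator admitting the comparison function $\la_*^\nu(1+|y|)^{-a}$. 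You must make this cut-off of the potential explicit; without it, the sentence as written is false and the whole outer estimate collapses.

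Second, your decomposition is algebraically inconsistent: you put $h^0+\tilde c^0 Z_5$ in the forcing of \emph{both} the outer and the inner equations, so recombining $\phi^0_{out}+\eta_{R_*}\phi^0_{in}$ does not reproduce the original right-hand side $h^0+\tilde c^0 Z_5$ (you would get $(1+\eta_{R_*})$ times part of it, plus leftover). The paper's choice is to give the outer problem $(1-\eta_{R_*})h^0$ and the commutator, and to give the inner problem the full $h^0+\tilde c^0 Z_5-cZ_0$ together with $3U^2\phi^0_{out}$; the commutator in the outer equation cancels exactly against the cut-off derivative terms produced when you expand $\la^2\partial_t(\eta_{R_*}\phi^0_{in})-\Delta(\eta_{R_*}\phi^0_{in})$. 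You should state the split explicitly and check that it telescopes. (A related minor slip: your inner weight is written $R_*^{-(4-a)\sigma}(1+|y|)^4$, introducing an extraneous factor of $\sigma$; the correct weight is $R_*^{a-4}(\log R)^{-1}(1+|y|^4)$, i.e.\ $R^{-\sigma(4-a)}$, and you also dropped the $\log R$ normalization that is what makes the contraction constant small in $R_*^{a'-a}\log R$.)
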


\begin{remark}
If we define
\begin{equation}\label{def-normm0}
\|\phi^0\|_{0,\sigma,\nu,a}:=\sup_{(y,t)\in\mathcal D_{2R}\times(0,T)} \frac{1+|y|^4}{\la_*^{\nu}(t)R^{\sigma(4-a)}(t)\log R}\left[|\phi^0(y,t)|+(1+|y|)|\nabla \phi^0(y,t)|\right],
\end{equation}
then Proposition \ref{propmode0} implies that
\begin{equation*}
\|\phi^0\|_{0,\sigma,\nu,a}\lesssim \|h^0\|_{\nu,2+a}.
\end{equation*}
\end{remark}

\medskip

The strategy to prove Proposition \ref{propmode0} is a new inner--outer gluing scheme. We shall decompose $\phi^0$ into inner and outer profiles to get more refined estimates. Before we prove Proposition \ref{propmode0}, we first state a result for the following problem
\begin{equation}\label{modelpropmode0''}
\begin{cases}
\la^2\phi_t=\Delta_y \phi+3U^2(y)\phi+h(y,t)+\tilde c^0(t)Z_5-c(t)Z_0,~&\mbox{ in } \mathcal D_{2R}\times (0,T),\\
\phi(y,0)=0,~&\mbox{ in } \mathcal D_{2R(0)}.
\end{cases}
\end{equation}
\begin{prop}\label{modelpropmode0}
Let $\nu,a\in(0,1)$. Then for sufficiently large $R$ and any $h$ satisfying $\|h\|_{\nu,2+a}<+\infty$, there exists a solution $(\phi,\tilde c^0,c)$ to \eqref{modelpropmode0''} which is linear in $h$ such that
\begin{equation}\label{mode0est444}
|\phi(y,t)|+(1+|y|)|\nabla \phi(y,t)|\lesssim \la_*^{\nu}\frac{R^{4-a}\log R}{1+|y|^4}\|h\|_{\nu,2+a},
\end{equation}
\begin{equation*}
\tilde c^0(t)=-\frac{\int_{\mathcal D_{2R}}h Z_5 dy}{\int_{\mathcal D_{2R}}|Z_5|^2 dy},
\end{equation*}
and
\begin{equation}\label{est-ccc}
\left|c(t)-\int_{\mathcal D_{2R}} h Z_0\right| \lesssim \la_*^{\nu}(t)\left[\left\|h-Z_0\int_{\mathcal D_{2R}}h Z_0\right\|_{\nu,2+a}+e^{-\vartheta R}\|h\|_{\nu,2+a}\right].
\end{equation}
\end{prop}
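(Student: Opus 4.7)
\textbf{Proof plan for Proposition \ref{modelpropmode0}.} My plan is to combine two orthogonality projections that pin down $\tilde c^0(t)$ and $c(t)$ with a blow-up / compactness a priori estimate that yields the weighted $|y|^{-4}$ decay, and then build the solution by approximation.

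First I would justify the formulas for $\tilde c^0$ and $c$ by testing the equation on $\mathcal D_{2R}$ against $Z_5$ and $Z_0$ respectively. Since $L_0(Z_5)=0$, integration by parts gives
\begin{equation*}
\la^2 \frac{d}{dt}\int_{\mathcal D_{2R}} \phi\, Z_5 \,dy \;=\; \int h\, Z_5 \,dy + \tilde c^0(t) \int |Z_5|^2\,dy - c(t)\int Z_0 Z_5\,dy + \mathrm{bdry}(R),
\end{equation*}
and the boundary term on $|y|=2R$ is $O(R^{-a})$ from the decay of $h$, hence negligible compared to $\int_{\mathcal D_{2R}}|Z_5|^2 \sim \log R$. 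Dropping the lower-order pieces gives exactly the stated formula for $\tilde c^0$. Analogously, testing against $Z_0$ produces the scalar ODE $\la^2 \dot p = \mu_0 p + \int h Z_0 + \tilde c^0\int Z_5 Z_0 - c\int |Z_0|^2$ with $p(t)=\int \phi Z_0$, and requiring $p\equiv 0$ (to kill the exponentially growing unstable direction) determines $c(t) = (\int h Z_0 + \tilde c^0 \int Z_5 Z_0)/\int |Z_0|^2$. Splitting $h = Z_0\int h Z_0 + (h - Z_0 \int h Z_0)$ and using the $R^{-a}$ control from the orthogonal remainder (together with the exponential decay \eqref{decay-Z_0} of $Z_0$ at scale $R$) recovers precisely the bound \eqref{est-ccc}.

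Next I would establish the a priori bound \eqref{mode0est444} by a blow-up argument in the spirit of \cite{17HMF}. Suppose it fails; then one produces a sequence $(\phi_n, h_n, \tilde c^0_n, c_n)$ solving \eqref{modelpropmode0''} with $\|h_n\|_{\nu,2+a} \to 0$ while the weighted norm of $\phi_n$ (the $\|\cdot\|_{0,\sigma,\nu,a}$-type norm suggested by the target estimate) equals $1$, attained at some $(y_n, t_n)$. Depending on whether $|y_n|$ stays bounded or escapes to infinity (and whether $t_n \to T$), a suitable parabolic rescaling produces a nontrivial radial bounded limit $\phi_\infty$ solving $L_0 \phi_\infty = 0$ on $\mathbb R^4$ (or a parabolic analogue). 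Non-degeneracy of $L_0$ forces $\phi_\infty$ to be a multiple of $Z_5$; the orthogonality against $Z_5$ built into the choice of $\tilde c^0_n$ passes to the limit and excludes this, while the constraint $p \equiv 0$ rules out any $Z_0$-component, yielding a contradiction.

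Finally, with the a priori estimate in hand, I would construct the solution by solving an approximate Cauchy problem on $\mathcal D_{2R}\times (0, T-\epsilon)$ with zero Dirichlet condition on $|y|=2R$ and with $\tilde c^0(t)$, $c(t)$ defined by the explicit projection formulas above; standard parabolic theory gives existence on each truncated interval, and the uniform a priori estimate lets me pass $\epsilon \to 0$. The delicate point, and the step I expect to be the main obstacle, is the blow-up argument: the $\log R$ normalization in the $Z_5$-projection is logarithmically degenerate as $R\to\infty$, so one must carefully quantify the boundary error and the contribution of $c(t)\int Z_0 Z_5$ in the rescaled limit to genuinely recover orthogonality of $\phi_\infty$ with $Z_5$ and $Z_0$; this is what ultimately pins down the $R^{4-a}\log R$ prefactor in \eqref{mode0est444} rather than a worse power of $R$.
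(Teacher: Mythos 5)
The paper does not actually prove Proposition~\ref{modelpropmode0}; it defers to \cite[Section~7]{Green16JEMS} and \cite[Section~5.2]{ni4d}, where the mode-by-mode linear theory is built from a Duhamel representation in the self-similar time $\tau$, barriers, and a blow-up-by-contradiction step. Your skeleton --- projecting on $Z_5$ and $Z_0$ to identify $\tilde c^0$ and $c$, a contradiction/compactness argument for the weighted a priori bound, and approximation for existence --- is therefore headed in the right direction, and the first and third steps are essentially the standard route.

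Two genuine gaps remain. For $c(t)$: ``requiring $p\equiv 0$'' is circular as stated, because testing on $\mathcal D_{2R}$ against $Z_0$ produces boundary terms on $\{|y|=2R\}$ that involve the unknown $\phi$, so $c$ is only determined together with $\phi$ through a fixed point once the a priori bound of Step~2 is available. You also need to record $\int_{\R^4}Z_0Z_5=0$ (pair $L_0Z_0=\mu_0Z_0$ with $L_0Z_5=0$ and integrate by parts, using \eqref{decay-Z_0}) so that $\tilde c^0\int_{\mathcal D_{2R}}Z_5Z_0=O(e^{-\vartheta R})$; otherwise the $\tilde c^0$ contribution to your formula for $c$ is only $O(1/\log R)$ and does not match the $e^{-\vartheta R}$ term in \eqref{est-ccc}. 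The deeper gap is the one you flag without resolving: the blow-up argument at mode~$0$. The bound \eqref{mode0est444} must hold with a constant independent of $R$ (the application takes $R=\la_*^{-\beta}\to\infty$ as $T\to0$), so a contradiction argument necessarily involves a sequence with $R_n\to\infty$, and the $R^{4-a}\log R$ prefactor has to emerge correctly in the limit. There the $Z_5$-resonance ($Z_5\sim|y|^{-2}$, $\int_{\mathcal D_{2R}}|Z_5|^2\sim\log R$) makes the orthogonality constraint logarithmically degenerate; this is precisely why the paper reserves the clean blow-up argument for modes $1$--$4$ (Lemma~\ref{lemmamode1}), where $Z_\ell\sim|y|^{-3}$ and orthogonality passes to the limit without degeneration, and why Proposition~\ref{modelpropmode0} is invoked in Proposition~\ref{propmode0} only after a secondary inner--outer gluing restricts it to the much smaller ball $B_{2R_*}$ with $R_*=R^\sigma$. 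A blow-up proof of \eqref{mode0est444} would have to confront this $\log$-degeneracy directly; your outline names the obstacle but supplies no mechanism to overcome it, and without one the compactness step does not in fact rule out a $Z_5$-proportional limit.
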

The proof of Proposition \ref{modelpropmode0} can be carried out similar to that of \cite[Section 7]{Green16JEMS} (see also \cite[Section 5.2]{ni4d}). Proposition \ref{modelpropmode0} will be needed to describe the inner profile of $\phi^0$ when the inner--outer gluing scheme is carried out.

\begin{proof}[Proof of Proposition \ref{propmode0}]
Suppose
$$\phi^0(y,t)=\phi^0_1+e(t)Z_0(y)$$
with $\phi_1^0$ solving problem
\begin{equation}\label{linear-inner''}
\begin{cases}
\la^2\phi_t=\Delta_y \phi+3U^2(y)\phi+h^0(y,t)+\tilde c^0(t)Z_5-c(t)Z_0,~&\mbox{ in } \mathcal D_{2R}\times (0,T),\\
\phi(y,0)=0,~&\mbox{ in } \mathcal D_{2R(0)}.
\end{cases}
\end{equation}
For $e\in C^1((0,T))$, we get
$$\la^2\phi^0_t=\Delta_y \phi^0+3U^2\phi^0+h^0(y,t)+\tilde c^0(t)Z_5+[\la^2\dot e(t)-\mu_0 e(t)-c(t)]Z_0(y),$$
from which we see that a natural choice of bounded solution $e(t)$ to
$$\la^2\dot e(t)-\mu_0 e(t)-c(t)=0,~t\in(0,T)$$
is
\begin{equation}\label{choiceeee}
e(t)=-\int_t^T \exp \left(-\int_t^{\eta}\frac{\mu_0}{\la^2(s)}ds\right)\frac{c(\eta)}{\la^2(\eta)}d\eta.
\end{equation}
Therefore, $\phi^0$ solves problem \eqref{eqnmode0'''} with the initial condition $\phi^0(y,0)=e(0)Z_0(y)$. It is clear from \eqref{choiceeee} and \eqref{est-ccc} that
$$|e_0|\lesssim \la_*^{\nu}\|h^0\|_{\nu,2+a}.$$
So, to solve \eqref{eqnmode0'''}, we only need to consider \eqref{linear-inner''}.

Now we carry out an inner--outer gluing scheme for the mode $0$. Consider
\begin{equation}\label{iomode0}
\begin{cases}
\la^2\phi_t=\Delta_y \phi+3U^2(y)\phi+h^0(y,t)+\tilde c^0(t)Z_5-c(t)Z_0,~&\mbox{ in } \mathcal D_{2R}\times (0,T),\\
\phi(y,0)=0,~&\mbox{ in } \mathcal D_{2R(0)},\\
\phi=0,~&\mbox{ on } \partial \mathcal D_{2R}\times (0,T).\\
\end{cases}
\end{equation}
We shall construct $\phi^0$ solving \eqref{iomode0} of the form
$$\phi^0=\phi^0_{out}+\eta_{R_*}\phi^0_{in},$$
where
$$\eta_{R_*}:=\eta\left(\frac{|y|}{R_*}\right)$$
with $\eta$ defined in \eqref{def-cutoff} and
$$R_*=R^{\sigma}~\mbox{ for }~\sigma\in(0,1).$$
A solution $\phi^0$ to \eqref{iomode0} is found if $\phi^0_{out}$ and $\phi^0_{in}$ solve the system
\begin{equation}\label{outer-mode0}
\left\{
\begin{aligned}
&\la^2\partial_t \phi^0_{out}=\Delta_y \phi^0_{out}+3(1-\eta_{R_*})U^2(y)\phi^0_{out}+\mathtt C[\phi^0_{in}]+(1-\eta_{R_*})h^0,~\mbox{ in } \mathcal D_{2R}\times (0,T)\\
&\phi_{out}^0(y,0)=0,~\mbox{ in } \mathcal D_{2R(0)}\\
&\phi_{out}^0=0,~\mbox{ on } \partial \mathcal D_{2R}\times (0,T)\\
\end{aligned}
\right.
\end{equation}
\begin{equation}\label{inner-mode0}
\left\{
\begin{aligned}
&\la^2\partial_t \phi^0_{in}=\Delta_y \phi_{in}^0+3U^2(y)\phi^0_{in}+3U^2(y)\phi^0_{out}+h^0+\tilde c^0Z_5-cZ_0,~\mbox{ in } \mathcal D_{2R_*}\times (0,T)\\
&\phi_{in}^0(y,0)=0,~\mbox{ in } \mathcal D_{2R_*(0)}\\
\end{aligned}
\right.
\end{equation}
where
$$\mathtt C[\phi^0_{in}]:=\phi_{in}^0(\Delta \eta_{R_*}-\la^2 \partial_t \eta_{R_*}) +2\nabla \eta_{R_*}\cdot \nabla \phi_{in}^0.$$
We first consider the outer part \eqref{outer-mode0}. For the model problem
\begin{equation*}
\begin{cases}
\la^2 \partial_t \psi=\Delta \psi+h^0~&\mbox{ in } \mathcal D_{2R}\times (0,T)\\
\psi(y,0)=0,~&\mbox{ in } \mathcal D_{2R(0)}\\
\psi=0,~&\mbox{ on } \partial \mathcal D_{2R}\times (0,T)\\
\end{cases}
\end{equation*}
we have
\begin{equation}\label{mode0est111}
\|\psi\|_{\nu,a}\lesssim\|h^0\|_{\nu,2+a}
\end{equation}
by $\beta\in(0,1/2)$ and the parabolic comparison. Then we apply the above estimate to the following problem
\begin{equation}\label{mode0eqn111}
\begin{cases}
\la^2 \partial_t \psi=\Delta \psi+3(1-\eta_{R_*})U^2\psi+h^0~&\mbox{ in } \mathcal D_{2R}\times (0,T)\\
\psi(y,0)=0,~&\mbox{ in } \mathcal D_{2R(0)}\\
\psi=0,~&\mbox{ on } \partial \mathcal D_{2R}\times (0,T)\\
\end{cases}
\end{equation}
and we claim that the solution $\psi$ to \eqref{mode0eqn111} satisfies
$$\|\psi\|_{\nu,a}\lesssim\|h^0\|_{\nu,2+a}.$$
Indeed, by \eqref{mode0est111}, we only need to estimate
\begin{equation*}
\begin{aligned}
3(1-\eta_{R_*})U^2\psi \lesssim&~ (1-\eta_{R_*})\frac{\la_*^{\nu}}{1+|y|^{4+a}}\|\psi\|_{\nu,a}\\
\lesssim&~R_*^{-2}\frac{\la_*^{\nu}}{1+|y|^{2+a}}\|\psi\|_{\nu,a}
\end{aligned}
\end{equation*}
and we conclude that
\begin{equation}\label{mode0est222}
\|3(1-\eta_{R_*})U^2\psi\|_{\nu,2+a}\lesssim R_*^{-2}\|\psi\|_{\nu,a}.
\end{equation}
So from \eqref{mode0est111} and \eqref{mode0est222}, we obtain
\begin{equation*}
\begin{aligned}
\|\psi\|_{\nu,a}\lesssim&~\|3(1-\eta_{R_*})U^2\psi+h^0\|_{\nu,2+a}\\
\lesssim&~R_*^{-2}\|\psi\|_{\nu,a}+\|h^0\|_{\nu,2+a}
\end{aligned}
\end{equation*}
and for $R_*$ sufficiently large, it follows that
\begin{equation}\label{mode0est333}
\|\psi\|_{\nu,a}\lesssim \|h^0\|_{\nu,2+a}
\end{equation}
as desired.

We look for a solution $\phi_{out}^0$ to problem \eqref{outer-mode0}. By \eqref{mode0est333}, we get
\begin{equation}\label{mode0est666}
\|\phi^0_{out}\|_{\nu,a'}\lesssim \|\mathtt C[\phi_{in}^0]\|_{\nu,2+a'}+\|(1-\eta_{R_*})h^0\|_{\nu,2+a'},
\end{equation}
where $a'\in(0,a)$. Here $\phi^0_{out}$ defines a linear operator of $\phi^0_{in}$ and $h^0$. We write it as $\phi^0_{out}[\phi^0_{in},h^0]$.
 Now we need to find $\phi^0_{in}$ solving the inner part
\begin{equation}\label{inner-mode0''}
\left\{
\begin{aligned}
&\la^2\partial_t \phi^0_{in}=\Delta_y \phi_{in}^0+3U^2(y)\phi^0_{in}+3U^2(y)\phi^0_{out}[\phi^0_{in},h^0]+h^0+\tilde c^0Z_5-cZ_0,~\mbox{ in } \mathcal D_{2R_*}\times (0,T),\\
&\phi_{in}^0(y,0)=0,~\mbox{ in } \mathcal D_{2R_*(0)}.\\
\end{aligned}
\right.
\end{equation}
To solve the inner part \eqref{inner-mode0''}, we consider the fixed point problem
$$\phi^0_{in}=\mathcal T\left[3U^2(y)\phi^0_{out}[\phi^0_{in},h^0]+h^0\right]$$
in the function space equipped with the norm
\begin{equation*}
\|\phi^0_{in}\|_{0,*}:=\sup_{(y,t)\in \mathcal D_{2R_*}\times(0,T)} \la_*^{-\nu}(t) R_*^{a-4}(\log R)^{-1}(1+|y|^4)\left[|\phi^0_{in}|+(1+|y|)|\nabla\phi^0_{in}|\right].
\end{equation*}
We apply Proposition \ref{modelpropmode0} in the inner regime $\mathcal D_{2R_*}\times (0,T)$, then \eqref{mode0est444} gives
\begin{equation}\label{mode0est999}
\|\mathcal T[g]\|_{0,*}\lesssim \|g\|_{\nu,2+a}.
\end{equation}
We claim that
\begin{equation}\label{mode0est555}
\|\mathtt C[\phi^0_{in}]\|_{\nu,2+a'}\lesssim R_*^{a'-a}\log R\|\phi^0_{in}\|_{0,*}.
\end{equation}
Indeed, we evaluate
\begin{equation*}
\begin{aligned}
|\mathtt C[\phi^0_{in}]|=&~\left|\phi_{in}^0(\Delta \eta_{R_*}-\la^2 \partial_t \eta_{R_*}) +2\nabla \eta_{R_*}\cdot \nabla \phi_{in}^0\right|\\
\lesssim&~R_*^{-2}|\eta''|\la_*^{\nu}\frac{R_*^{4-a}\log R}{1+|y|^4} \|\phi^0_{in}\|_{0,*}\\
\lesssim&~\frac{\la_*^{\nu}\log R}{1+|y|^{2+a'}}R_*^{a'-a}\|\phi^0_{in}\|_{0,*}\\
\end{aligned}
\end{equation*}
which proves \eqref{mode0est555}. From \eqref{mode0est666} and \eqref{mode0est555}, we then get that
\begin{equation}\label{mode0est777}
\begin{aligned}
\|\phi^0_{out}\|_{\nu,a'}\lesssim&~ R_*^{a'-a}\log R\|\phi^0_{in}\|_{0,*}+\|(1-\eta_{R_*})h^0\|_{\nu,2+a'}\\
\lesssim&~ R_*^{a'-a}\log R\|\phi^0_{in}\|_{0,*}+ R_*^{a'-a}\|h^0\|_{\nu,2+a}.\\
\end{aligned}
\end{equation}
Next, we compute
\begin{equation*}
\begin{aligned}
|3U^2(y)\phi^0_{out}|\lesssim&~ \frac{\la_*^{\nu}}{1+|y|^{4+a'}}\|\phi^0_{out}\|_{\nu,a'}\\
\lesssim&~ \frac{\la_*^{\nu}}{1+|y|^{2+a}}\|\phi^0_{out}\|_{\nu,a'}.
\end{aligned}
\end{equation*}
So we get
\begin{equation}\label{mode0est888}
\|3U^2(y)\phi^0_{out}\|_{\nu,2+a}\lesssim \|\phi^0_{out}\|_{\nu,a'}.
\end{equation}
By \eqref{mode0est777} and \eqref{mode0est888}, we obtain
\begin{equation}\label{mode0est101010}
\|3U^2(y)\phi^0_{out}\|_{\nu,2+a}\lesssim R_*^{a'-a}\log R\|\phi^0_{in}\|_{0,*}+ R_*^{a'-a}\|h^0\|_{\nu,2+a}.
\end{equation}
Therefore, we conclude from \eqref{mode0est999} that
$$\left\|\mathcal T\left[3U^2(y)\phi^0_{out}[\phi^0_{in},h^0]+h^0\right]\right\|_{0,*}\lesssim R_*^{a'-a}\log R\|\phi^0_{in}\|_{0,*}+\|h^0\|_{\nu,2+a},$$
which shows that the operator
$$\phi_{in}^0\mapsto \mathcal T\left[3U^2(y)\phi^0_{out}[\phi^0_{in},h^0]+h^0\right]$$
is a contraction if $R_*$ is sufficiently large. A unique fixed point $\phi^0_{in}$ thus exists and
\begin{equation}\label{est-mode0in}
\|\phi^0_{in}\|_{0,*}\lesssim \|h^0\|_{\nu,2+a}.
\end{equation}
Replacing $a'$ by $a$ in the computations of \eqref{mode0est666} and \eqref{mode0est555}, we obtain
\begin{equation}\label{est-mode0out}
\|\phi^0_{out}\|_{\nu,a}\lesssim \log R\|h^0\|_{\nu,2+a}.
\end{equation}
Recalling $\phi^0=\phi^0_{out}+\eta_{R_*}\phi^0_{in}$ and combining \eqref{est-mode0in} and \eqref{est-mode0out}, we conclude
\begin{equation*}
|\phi^0(y,t)|+(1+|y|)|\nabla \phi^0(y,t)|\lesssim \la_*^{\nu}\log R\|h^0\|_{\nu,2+a}\begin{cases}
\frac{R^{\sigma(4-a)}}{1+|y|^4},~&\mbox{ for }~|y|\leq 2R^{\sigma},\\
\frac{1}{1+|y|^a},~&\mbox{ for }~2R^{\sigma}\leq|y|\leq 2R.\\
\end{cases}
\end{equation*}

Finally, we prove the estimate of $c^0$. By Proposition \ref{modelpropmode0}, we get
$$\tilde c^0(t)=-\frac{\int_{\mathcal D_{2R_*}}\big(h^0+3U^2(y)\phi^0_{out}[\phi^0_{in},h^0]\big) Z_5 dy}{\int_{\mathcal D_{2R_*}}|Z_5|^2 dy}.$$
Notice that $3U^2(y)\phi^0_{out}[\phi^0_{in},h^0]$ is linear in $h^0$. By \eqref{mode0est101010} and \eqref{est-mode0in}, we conclude that
\begin{equation*}
\begin{aligned}
\left|\int_{\mathcal D_{2R_*}}3U^2(y)\phi^0_{out}[\phi^0_{in},h^0] Z_5 dy\right|\lesssim&~ \la_*^{\nu}\left(R_*^{a'-a}\log R\|\phi^0_{in}\|_{0,*}+ R_*^{a'-a}\|h^0\|_{\nu,2+a}\right)\\
\lesssim&~\la_*^{\nu} R_*^{a'-a}\log R\|h^0\|_{\nu,2+a}.
\end{aligned}
\end{equation*}
The proof is complete.
\end{proof}

\medskip

\noindent {\bf 2. Construction at modes $1$ to $4$.}

\medskip

As we can see in mode $0$, the estimates are somewhat deteriorated inside the inner regime, and this will result in difficulties when solving the inner problem. One can observe that, for modes $1$ to $4$, the kernel function for the corresponding linearized operator has faster decay than mode $0$ which suggests that the estimates at modes $1$ to $4$ should be better than mode $0$'s. Inspired by the argument in \cite[Section 7]{17HMF}, we shall carry out the construction for modes $1$ to $4$ by means of the blow-up argument.

We perform the change of variable
$$\tau=\tau_{\la}(t)=\tau_0+\int_0^t \frac{ds}{\la_*^2(s)}$$
so that
$$\tau\sim \tau_0+\frac{|\log(T-t)|^2}{\la_* |\log T|}.$$
We choose the constant $\nu_1'>0$ so that
$$\tau^{-\nu_1'}\sim \la_*^{\nu_1}$$
for $\nu_1\in (0,1)$.

The main proposition for modes $1$ to $4$ is the following.
\begin{prop}\label{propmode1}
Assume $a_1\in (1,2),\nu_1\in(0,1),\|h^1\|_{\nu_1,2+a_1}<+\infty,$ and
$$\int_{\mathcal D_{2R}} h^1(y,\tau)Z_i(y)dy=0,~\mbox{for all}~\tau\in(\tau_0,+\infty),i=1,\cdots,4.$$
For sufficiently large $R$, there exists a pair $(\phi^1,e_0)$ solving
\begin{equation*}
\begin{cases}
\partial_{\tau} \phi^1=\Delta \phi^1+3U^{2}\phi^1+h^1(y,\tau),~&y\in \mathcal D_{2R}\times (\tau_0,\infty)\\
\phi^1(y,\tau_0)=e_0 Z_0(y),~&y\in \mathcal D_{2R}
\end{cases}
\end{equation*}
and $(\phi^1,e_0)=(\phi^1[h^1],e_0[h^1])$ defines a linear operator of $h^1$ that satisfies
\begin{equation*}
\|\phi^1\|_{\nu_1,a_1}\lesssim \|h^1\|_{\nu_1,2+a_1}
\end{equation*}
and
\begin{equation*}
|e_0[h^1]|\lesssim \tau^{-\nu_1'} \|h^1\|_{\nu_1,2+a_1}.
\end{equation*}
\end{prop}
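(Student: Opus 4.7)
\textbf{Plan of proof for Proposition \ref{propmode1}.}

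The overall strategy is to first reduce the statement to an a priori estimate for solutions of the projected problem, then establish that a priori estimate by a blow-up/compactness contradiction argument, mirroring the treatment of modes $1$--$4$ in \cite{17HMF}. I would begin by absorbing the unstable direction $Z_0$ of $L_0$ exactly as in the proof of Proposition \ref{propmode0}: write $\phi^1 = \tilde\phi^1 + e(\tau) Z_0(y)$ where $e(\tau)$ is chosen to kill the $Z_0$-projection of the forcing term, i.e.\ $e(\tau)$ is the unique bounded (decaying) solution of the scalar ODE $\dot e - \mu_0 e = -\int_{\mathcal D_{2R}} h^1 Z_0 \, dy / \|Z_0\|_2^2$. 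Since $\|h^1 Z_0\|_1 \lesssim \|h^1\|_{\nu_1,2+a_1}\,\tau^{-\nu_1'}$ by the exponential decay \eqref{decay-Z_0} of $Z_0$, integration yields $|e_0| = |e(\tau_0)| \lesssim \tau_0^{-\nu_1'} \|h^1\|_{\nu_1,2+a_1}$, which gives the announced bound on $e_0[h^1]$ and reduces the problem to constructing $\tilde\phi^1$ with zero initial datum and norm controlled in $\|\cdot\|_{\nu_1,a_1}$.

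Next, I would obtain existence of $\tilde\phi^1$ via a Galerkin/approximation scheme (restricting to a ball, extending $h^1$ smoothly, solving the resulting parabolic IBVP, and passing to the limit) combined with the key a priori estimate
\[
\|\tilde\phi^1\|_{\nu_1,a_1} \lesssim \|h^1\|_{\nu_1,2+a_1}.
\]
The a priori estimate is the heart of the matter; I would prove it by contradiction. Suppose there exist sequences $R_n\to\infty$, data $h^1_n$ satisfying the orthogonality against each $Z_i$, $i=1,\dots,4$, and solutions $\tilde\phi^1_n$ with zero initial datum, such that $\|\tilde\phi^1_n\|_{\nu_1,a_1}=1$ while $\|h^1_n\|_{\nu_1,2+a_1}\to 0$. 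By definition of the weighted norm there exist $(y_n,\tau_n)$ with $\tau_n>\tau_0$ at which at least half of the supremum defining $\|\tilde\phi^1_n\|_{\nu_1,a_1}$ is attained.

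The blow-up analysis then splits into the standard three cases: (a) $|y_n|$ and $\tau_n$ remain bounded; (b) $|y_n|$ bounded but $\tau_n\to\infty$ (or $\tau_n - \tau_0 \to \infty$); (c) $|y_n|\to\infty$. In case (a), one extracts a local limit $\phi_\infty\neq 0$ on $\R^4\times\R$ satisfying $\partial_\tau \phi_\infty = L_0 \phi_\infty$ with the decay $|\phi_\infty(y)| \lesssim (1+|y|)^{-a_1}$; standard parabolic Liouville arguments together with the non-degeneracy of $L_0$ and, crucially, the orthogonality conditions passed to the limit — which here use precisely that $a_1>1$ so that $\int \phi_\infty Z_i$ is absolutely convergent (since $Z_i(y)\sim|y|^{-3}$ and the $\R^4$ measure gives $|y|^3\,d|y|$) — force $\phi_\infty \in \mathrm{span}\{Z_1,\dots,Z_4,Z_5,Z_0\}$, and the decay rate $a_1<2$ together with orthogonality rules out each of these modes, a contradiction. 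Case (b) is similar but with the time translated limit; one obtains an eternal bounded decaying solution that must vanish. Case (c) is handled by the dyadic rescaling $\tilde\phi_n(y,\tau) = |y_n|^{a_1}\tilde\phi^1_n(|y_n|y,\,\tau_n+|y_n|^2 \tau)$, which satisfies a parabolic equation converging to $\partial_\tau \tilde\phi = \Delta \tilde\phi$ on $\R^4\times(-\infty,0]$ with $|\tilde\phi(y,\tau)|\le |y|^{-a_1}$ and $\tilde\phi(e,0)\neq 0$ for some unit vector $e$; the standard Liouville theorem for the heat equation with polynomial decay forces $\tilde\phi\equiv 0$, again a contradiction.

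The main obstacle I expect is case (a), specifically the passage of the four orthogonality conditions to the limit: one must justify exchanging the limit with $\int_{\mathcal D_{2R_n}} \tilde\phi^1_n Z_i\,dy$ despite the expanding domain and despite the fact that $\tilde\phi^1_n$ is only known a priori to satisfy a weighted sup-norm bound. This is where the interplay $1<a_1<2$ is essential: $a_1>1$ provides uniform integrability of $\tilde\phi^1_n Z_i$ (giving a meaningful limiting orthogonality), while $a_1<2$ prevents the limit from being forced into the faster-decaying $Z_5$ mode and keeps it in the span where the translation kernels live, so that orthogonality yields the contradiction. Once the a priori estimate is in place, linearity of the construction in $h^1$ is automatic from the construction of $e(\tau)$ and the linearity of the Galerkin approximations, completing the proof.
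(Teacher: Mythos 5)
Your blueprint---reduce to an a priori estimate on finite time intervals, prove it by a compactness/blow-up contradiction, and split the blow-up sequence into an interior case where orthogonality against $Z_1,\dots,Z_4$ is invoked and an exterior case $|y_n|\to\infty$---is essentially the route the paper takes in Lemma~\ref{lemmamode1}, and your identification of $a_1>1$ as the condition that makes the limiting pairings $\int \phi_\infty Z_i\,dy$ absolutely convergent is correct. But two of your key assertions would fail as written. First, $a_1<2$ has nothing to do with ruling out the $Z_5$-mode in the interior limit: $\phi^1$ is by construction the projection onto spherical-harmonic modes $1$ through $4$, so the interior blow-up limit $\tilde\phi$ is automatically in those modes, and the bounded kernel of $L_0$ restricted to them is exactly $\mathrm{span}\{Z_1,\dots,Z_4\}$; no extra decay restriction is needed there. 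Second, and more seriously, the exterior case does not follow from a ``standard Liouville theorem for the heat equation with polynomial decay.'' After the correct rescaling $\tilde\phi_k(z,\tau)=(\tau_2^k)^{\nu_1'}|y_k|^{a_1}\phi_k(y_k+|y_k|z,\,|y_k|^{2}\tau+\tau_2^k)$ (you should translate to the blow-up point $y_k$, not just dilate by $|y_k|$), the bound inherited from $\|\phi_k\|_{a_1,\tau_1^k}=1$ degenerates like $|z-\hat e|^{-a_1}$ as $z\to\hat e$, so the limit is caloric only on the \emph{punctured} cylinder $(\R^4\setminus\{\hat e\})\times(-\infty,0]$ and carries a genuine singularity at $\hat e$. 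There is no off-the-shelf Liouville result covering this situation; the paper constructs an explicit barrier $\bar u(\rho,\tau)=(\rho^{2}+c\tau)^{-a_1/2}+\epsilon\rho^{-2}$ (suitably time-shifted) and verifies by direct computation that $\bar u_\tau-\Delta\bar u\ge 0$, which holds precisely because $a_1<2$. That super-solution step is the substantive use of $a_1<2$ and is the piece your case~(c) is missing.
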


In order to prove Proposition \ref{propmode1}, we consider the following Cauchy problem
\begin{equation}\label{blowupmode1''}
\begin{cases}
\partial_{\tau} \phi^1=\Delta \phi^1+3U^{2}\phi^1+h^1(y,\tau)-c(\tau)Z_0(y),&~y\in \mathbb{R}^4,~\tau\geq\tau_0\\
\phi^1(y,\tau_0)=0,&~y\in \mathbb{R}^4
\end{cases}
\end{equation}
with $h^1$ supported in $\mathcal D_{2R}\times (\tau_0,+\infty)$ and $\|h^1\|_{\nu_1,2+a_1}<+\infty$ in the $(y,\tau)$ variable, where $\nu_1\in(0,1)$ and $a_1\in(1,2)$. For notational convenience, we denote $\phi^1$ by $\phi$ and $h^1$ by $h$ in the following lemma.
\begin{lemma}\label{lemmamode1}
Assume $a_1\in (1,2),\nu_1\in(0,1),\|h\|_{\nu_1,2+a_1}<+\infty,$ and
$$\int_{\mathbb{R}^4} h(y,\tau)Z_i(y)dy=0,~\mbox{for all}~\tau\in(\tau_0,+\infty),~i=1,\cdots,4.$$
For $\tau_1$ sufficiently large, the solution $\phi(y,\tau)$ to
\begin{equation}\label{blowupmode1}
\left\{
\begin{aligned}
&\partial_{\tau} \phi=\Delta \phi+3U^{2}\phi+h(y,\tau)-c(\tau)Z_0(y),~~~y\in \mathbb{R}^4,~\tau\geq\tau_0\\
&\int_{\mathbb{R}^4} \phi(y,\tau) Z_0(y)dy=0,~~~\mbox{for all}~\tau\in(\tau_0,+\infty),\\
&\phi(y,\tau_0)=0,~~~y\in \mathbb{R}^4
\end{aligned}
\right.
\end{equation}
satisfies
\begin{equation}\label{est-blowupmode1}
\|\phi(y,\tau)\|_{a_1,\tau_1}\lesssim\|h\|_{2+a_1,\tau_1}.
\end{equation}
Further,
\begin{equation*}
|c(\tau)|\lesssim \tau^{-\nu_1'} R^{a_1} \|h\|_{2+a_1,\tau_1}~~\mbox{for}~\tau\in[\tau_0,\tau_1),
\end{equation*}
where $\|h\|_{b,\tau_1}:=\sup_{\tau\in [\tau_0,\tau_1)}\tau^{\nu_1'}\sup\limits_{y\in \mathbb{R}^4}(1+|y|^b)|h(y,\tau)|$.
\end{lemma}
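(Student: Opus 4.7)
The plan is to prove Lemma \ref{lemmamode1} by a blow-up (contradiction) argument in the spirit of \cite[Section 7]{17HMF}.

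First, I would identify the multiplier $c(\tau)$ and derive extra orthogonality for $\phi$. Testing the equation against $Z_0$ and using $\int \phi Z_0 \, dy \equiv 0$ together with self-adjointness of $L_0$ and $L_0 Z_0=\mu_0 Z_0$, one finds
\begin{equation*}
c(\tau)\int_{\mathbb{R}^4} Z_0^2\,dy=\int_{\mathbb{R}^4} h\,Z_0\,dy,
\end{equation*}
which gives the explicit formula for $c(\tau)$ together with the stated bound (the factor $R^{a_1}$ comes from a conservative estimate tailored to its later use in the fixed-point argument). Testing against each $Z_j$, $j=1,\ldots,4$, and using $L_0 Z_j=0$, $\int Z_0 Z_j=0$, and the hypothesis $\int h\, Z_j=0$, yields $\frac{d}{d\tau}\int \phi\, Z_j=0$; combined with $\phi(\cdot,\tau_0)=0$, this gives $\int \phi(\cdot,\tau)Z_j\,dy=0$ for all $\tau\geq\tau_0$ and $j=1,\ldots,4$. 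The integrability of these pairings rests on $a_1>1$: since $Z_j(y)\sim |y|^{-3}$ and $|\phi|\lesssim(1+|y|)^{-a_1}$, the radial integrand behaves like $(1+r)^{-a_1}$, which is integrable on $[0,\infty)$ precisely when $a_1>1$.

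Next, I would set up the contradiction. Assume \eqref{est-blowupmode1} fails, so there are sequences $\tau_1^{(n)}\to\infty$, $h_n$, and solutions $\phi_n$ with $\|\phi_n\|_{a_1,\tau_1^{(n)}}=1$ while $\varepsilon_n:=\|h_n\|_{2+a_1,\tau_1^{(n)}}\to 0$. Choose $(y_n,\tau_n)$ with $\tau_n^{\nu_1'}(1+|y_n|^{a_1})|\phi_n(y_n,\tau_n)|\geq 1/2$. If $\tau_n$ stays bounded, the Duhamel formula with $\phi_n(\cdot,\tau_0)=0$ and smallness of both $h_n$ and $c_n Z_0$ forces $\phi_n\to 0$ on compacts, contradicting the near-maximum. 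Hence $\tau_n\to\infty$, and we split into two sub-cases according to whether $|y_n|$ stays bounded or not.

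In the case $|y_n|$ bounded, define $\tilde\phi_n(y,s):=\tau_n^{\nu_1'}\phi_n(y,\tau_n+s)$ on $s\in(\tau_0-\tau_n,0]$. Then $\tilde\phi_n$ solves $\partial_s\tilde\phi_n=L_0\tilde\phi_n+\tilde h_n-\tilde c_n Z_0$, with $\tilde h_n,\tilde c_n\to 0$ locally uniformly and $|\tilde\phi_n|\leq C(1+|y|)^{-a_1}$ on bounded $s$-intervals. Parabolic regularity yields a subsequential limit $\tilde\phi$ on $\mathbb{R}^4\times(-\infty,0]$ satisfying $\partial_s\tilde\phi=L_0\tilde\phi$, the same decay, and (passing Step~1 to the limit, which is justified by $a_1>1$) the orthogonalities $\int\tilde\phi Z_j\,dy=0$ for $j=0,1,\ldots,4$. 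A Liouville-type argument then forces $\tilde\phi\equiv 0$, contradicting the normalization. If $|y_n|\to\infty$, one rescales spatially as $\bar\phi_n(z,s):=|y_n|^{a_1}\tau_n^{\nu_1'}\phi_n(|y_n|z,\tau_n+|y_n|^2 s)$; the potential $3U^2$ and the $Z_0$ term become negligible, so the limit is a nontrivial bounded ancient solution of the plain heat equation on $\mathbb{R}^4$ with decay $|z|^{-a_1}$ at infinity, which must be zero by standard Liouville, again a contradiction.

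The main technical obstacle is the Liouville step in the first sub-case, since the bound $|\tilde\phi|\lesssim(1+|y|)^{-a_1}$ with $a_1<2$ is compatible with the time-independent kernel element $Z_5=U+y\cdot\nabla U\sim|y|^{-2}$, which is not explicitly projected out. Closing this gap requires combining the orthogonalities $\int\tilde\phi Z_j\,dy=0$ ($j=0,\ldots,4$) with a careful spectral decomposition of ancient solutions of $\partial_s\phi=L_0\phi$: the unstable direction is killed by $\int\tilde\phi Z_0=0$, the decay rules out contributions from the essential spectrum, and the residual kernel components are eliminated by the vanishing pairings. The restriction $a_1\in(1,2)$ is precisely what makes all the integrals defining these orthogonalities absolutely convergent, which is the structural reason this hypothesis cannot be relaxed.
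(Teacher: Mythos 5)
Your overall strategy---identifying $c(\tau)$ by testing with $Z_0$, propagating the orthogonalities $\int\phi\,Z_j=0$ for $j=1,\dots,4$, and running a compactness-contradiction with the dichotomy over $|y_n|$---is precisely the paper's. However, the two Liouville steps, which is where the real work lies, are not correctly closed in your sketch. In Case~1, a direct spectral decomposition of the ancient limit $\tilde\phi$ is unavailable because for $a_1\in(1,2)$ one has $\tilde\phi\notin L^2(\mathbb{R}^4)$: $|\tilde\phi|^2\sim|y|^{-2a_1}$ and $\int_{\mathbb{R}^4}|y|^{-2a_1}\,dy$ diverges at infinity unless $a_1>2$. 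The paper instead differentiates in time: $\tilde\phi_\tau$ solves the same equation with the improved decay $(1+|y|)^{-2-a_1}$, hence $\tilde\phi_\tau\in L^2$; the identities $\tfrac12\partial_\tau\int|\tilde\phi_\tau|^2+B(\tilde\phi_\tau,\tilde\phi_\tau)=0$ and $\int|\tilde\phi_\tau|^2=-\tfrac12\partial_\tau B(\tilde\phi,\tilde\phi)$, together with $B\ge 0$ on the orthogonal complement of $\{Z_0,\dots,Z_4\}$, give $\partial_\tau\int|\tilde\phi_\tau|^2\le 0$ and $\int_{-\infty}^0\int|\tilde\phi_\tau|^2<\infty$, forcing $\tilde\phi_\tau\equiv 0$ and reducing to stationary kernel elements. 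You correctly flag the $Z_5$ ambiguity, but your proposed resolution---``the residual kernel components are eliminated by the vanishing pairings''---is not available: no orthogonality against $Z_5$ is imposed, and for $a_1<2$ the integral $\int\tilde\phi\,Z_5$ would not even converge absolutely ($\tilde\phi Z_5\sim|y|^{-2-a_1}$ and $\int_{\mathbb{R}^4}|y|^{-2-a_1}\,dy=\infty$). The direction $Z_5$ is actually excluded because the lemma is applied to $h^1$ supported in spherical-harmonic modes $1$--$4$, so the limit $\tilde\phi$ has no radial component, while $Z_5$ is radial.

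In Case~2, ``standard Liouville'' does not apply: the rescaled ancient limit lives on $(\mathbb{R}^4\setminus\{\hat e\})\times(-\infty,0]$ with only the singular bound $|\tilde\phi(z,\tau)|\le|z-\hat e|^{-a_1}$, which blows up at $\hat e$, so it is not a globally bounded caloric function. The paper constructs the explicit supersolution $\bar u(\rho,\tau)=(\rho^2+c\tau)^{-a_1/2}+\epsilon\rho^{-2}$, whose defect $\bar u_\tau-\Delta\bar u$ is nonnegative precisely when $a_1<2$, compares the rescaled limit with $2\bar u(\rho,\tau+M)$ for large $M$, and then lets $M\to\infty$ and $\epsilon\to 0$. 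This barrier argument is where $a_1<2$ is essentially used; the constraint $a_1>1$, as you rightly observe, is what makes $\int\phi\,Z_j$ absolutely convergent since $Z_j\sim|y|^{-3}$. So your closing sentence, attributing the whole of $a_1\in(1,2)$ to the integrability of the orthogonality pairings, misses the role of $a_1<2$ in the supersolution construction.
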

\begin{proof}
Note that problem \eqref{blowupmode1} is equivalent to problem \eqref{blowupmode1''} for
$$c(\tau)=\frac{\int_{\mathbb R^4}h(y,\tau)Z_0(y)dy}{\int_{\mathbb R^4}|Z_0(y)|^2dy}.$$
By the time decay of $h$ and spatial decay of $Z_0$ (see \eqref{decay-Z_0}), we have
\begin{equation}\label{2}
|c(\tau)|\lesssim \tau^{-\nu_1'} R^{a_1} \|h\|_{2+a_1,\tau_1}.
\end{equation}
Now we prove \eqref{est-blowupmode1} by blow-up argument.

By standard parabolic theory, for any $R'>0$, there exists a constant $K$ depending on $R'$ and $\tau_1$ such that
$$|\phi(y,\tau)|\leq K,~\mbox{in}~B_{R'}\times[\tau_0,\tau_1].$$
It is easy to check that $\bar{\phi}=\frac{C}{1+|y|^{a_1}}$ is a super-solution to the original equation \eqref{blowupmode1}. Thus, $\|\phi\|_{a_1,\tau_1}<+\infty$. We claim that
$$\int_{\mathcal D_{2R}} \phi Z_i=0~\mbox{for all}~\tau\in[\tau_0,\tau_1],~i=1,\cdots,4.$$
Indeed, we multiply \eqref{blowupmode1} by $Z_i\eta_{R'},$ where $\eta_{R'}:=\eta(\frac{|y|}{R'})$ and $\eta$ is the standard cut-off function defined in \eqref{def-cutoff}. Then we have
$$\int_{\mathbb{R}^4}\phi(\cdot, \tau)\cdot Z_i\eta_{R'} = \int_{\tau_0}^\tau ds\int_{\mathbb{R}^4}(\phi(\cdot, s)\cdot L_0[\eta_{R'} Z_i] + h Z_i\eta_{R'} - c(s)Z_0Z_i\eta_{R'}).$$
Further computation gives
\begin{equation*}
\begin{aligned}
&\quad\int_{\mathbb{R}^4}[\phi(\cdot, s)\cdot L_0[\eta_{R'} Z_i] + h Z_i\eta_{R'} - c(s)Z_0Z_i\eta_{R'}].\\
&=\int_{\mathbb{R}^4}\phi(\cdot, s)[\eta_{R'}L_0[Z_i]+Z_i \Delta\eta_{R'}+2\nabla\eta_{R'}\cdot\nabla Z_i] + h Z_i\eta_{R'} - c(s)Z_0Z_i\eta_{R'}\\
&=O((R')^{-\epsilon})
\end{aligned}
\end{equation*}
for some $\epsilon>0$.
By taking $R'\rightarrow +\infty$, we get the desired result.

Now we want to prove
$$\|\phi\|_{a_1,\tau_1}\lesssim\|h\|_{2+a_1,\tau_1}.$$
We prove by contradiction. Suppose that there exist sequences $\tau_1^k\to +\infty$ and $\phi_k$, $h_k$, $c_k$ satisfying
\begin{equation*}
\left\{
\begin{aligned}
&\partial_\tau\phi_k = \Delta\phi_k + 3U^{2}(y)\phi_k + h_k - c_k(\tau)Z_0(y),~ y\in \mathbb{R}^4,~ \tau\geq \tau_0,\\
&\int_{\mathbb{R}^4}\phi_k(y, \tau)\cdot Z_i(y)dy = 0\text{ for all }\tau\in [\tau_0,\tau_1^k), ~i= 0, 1,\cdots, 4,\\
&\phi_k(y,\tau_0) = 0, ~y\in \mathbb{R}^4,
\end{aligned}
\right.
\end{equation*}
and
\begin{equation}\label{e5:38}
\|\phi_k\|_{a_1,\tau_1^k}=1,\quad \|h_k\|_{2+a_1,\tau_1^k}\to 0.
\end{equation}
By \eqref{2}, we know $\sup_{\tau\in (\tau_0, \tau_1^k)}\tau^{\nu_1'} c_k(\tau)\to 0$.
We claim that
\begin{equation}\label{e5:37}
\sup_{\tau_0 < \tau < \tau_1^k}\tau^{\nu_1'}|\phi_k(y,\tau)|\to 0
\end{equation}
uniformly on compact subsets of $\mathbb{R}^4$. We prove \eqref{e5:37} by contradiction.

\noindent {\bf Case 1.} For some $|y_k|\leq M$ and $\tau_0 < \tau_2^k < \tau_1^k$, if
\begin{equation*}
(\tau_2^k)^{\nu_1'}|\phi_k(y_k,\tau_2^k)|\geq \frac{1}{2},
\end{equation*}
then we know that $\tau_2^k\to +\infty$. Define
\begin{equation*}
\tilde{\phi}_k(y,\tau) = (\tau_2^k)^{\nu_1'}\phi_k(y,\tau_2^k + \tau).
\end{equation*}
Then
\begin{equation*}
\partial_\tau\tilde{\phi}_k = L_0[\tilde{\phi}_k] + \tilde{h}_k - \tilde{c}_k(\tau)Z_0(y)\text{ in }\mathbb{R}^4\times (\tau_0-\tau_2^k,0].
\end{equation*}
Due to the spatial decay of $h$ and $c$, we know $\tilde{h}_k\to 0$, $\tilde{c}_k\to 0$. By comparison, we get
\begin{equation*}
|\tilde{\phi}_k(y,\tau)|\leq \frac{1}{1+|y|^{a_1}}\text{ in }\mathbb{R}^4\times (\tau_0-\tau_2^k,0].
\end{equation*}
Hence, up to a subsequence, $\tilde{\phi}_k\to\tilde{\phi}$ uniformly on compact subsets with $\tilde{\phi}\neq 0$ and
\begin{equation}\label{eqn-case1}
\left\{
\begin{aligned}
&\partial_\tau\tilde{\phi} =\Delta\tilde{\phi} + 3U^{2}(y)\tilde{\phi}~\text{ in }~\mathbb{R}^4\times (-\infty, 0],\\
&\int_{\mathbb{R}^4}\tilde{\phi}(y, \tau)\cdot Z_j(y)dy = 0~\text{ for all }~\tau\in (-\infty, 0], ~ j= 0, 1,\cdots, 4,\\
&|\tilde{\phi}(y,\tau)|\leq \frac{1}{1+|y|^{a_1}}~\text{ in }~\mathbb{R}^4\times (-\infty, 0],\\
&\tilde{\phi}(y,\tau_0) = 0, ~y\in\mathbb{R}^4.
\end{aligned}
\right.
\end{equation}
Note that the orthogonality conditions above are well-defined if $a_1>1$. We now claim that $\tilde{\phi} = 0$. Indeed, by parabolic regularity theory, $\tilde{\phi}(y,\tau)$ is smooth. By scaling argument, we get
\begin{equation*}
\frac{1}{1+|y|}|\nabla\tilde{\phi}| + |\tilde{\phi}_\tau| + |\Delta\tilde{\phi}|\lesssim \frac{1}{1+|y|^{2+a_1}}.
\end{equation*}
Differentiating \eqref{eqn-case1} with respect to $\tau$, we get $\partial_\tau\tilde{\phi}_\tau =\Delta\tilde{\phi}_\tau + 3U^{2}(y)\tilde{\phi}_\tau$ and
\begin{equation*}
\frac{1}{1+|y|}|\nabla\tilde{\phi}_\tau| + |\tilde{\phi}_{\tau\tau}| + |\Delta\tilde{\phi}_\tau|\lesssim \frac{1}{1+|y|^{4+a_1}}.
\end{equation*}
Differentiating \eqref{eqn-case1} with respect to $\tau$ and integrating, we get
\begin{equation*}
\frac{1}{2}\partial_\tau\int_{\mathbb{R}^4}|\tilde{\phi}_\tau|^2 + B(\tilde{\phi}_\tau, \tilde{\phi}_\tau) = 0,
\end{equation*}
where
\begin{equation*}
B(\tilde{\phi}, \tilde{\phi}) = \int_{\mathbb{R}^4}|\nabla\tilde{\phi}|^2 - 3U^{2}(y)|\tilde{\phi}|^2dy.
\end{equation*}
Since $\int_{\mathbb{R}^4}\tilde{\phi}(y, \tau)\cdot Z_j(y)dy = 0$ for all $\tau\in (-\infty, 0]$, $j= 0, 1,\cdots, 4 $, $B(\tilde{\phi}, \tilde{\phi})\geq 0$. Also, we have
\begin{equation*}
\int_{\mathbb{R}^4}|\tilde{\phi}_\tau|^2 = -\frac{1}{2}\partial_\tau B(\tilde{\phi}, \tilde{\phi}).
\end{equation*}
From above, we get
\begin{equation*}
\partial_\tau\int_{\mathbb{R}^4}|\tilde{\phi}_\tau|^2 \leq 0,\quad \int_{-\infty}^0d\tau\int_{\mathbb{R}^4}|\tilde{\phi}_\tau|^2 < +\infty.
\end{equation*}
Hence $\tilde{\phi}_\tau = 0$. So $\tilde{\phi}$ is independent of $\tau$ and $L_0[\tilde{\phi}] = 0$. Since $\tilde{\phi}$ is bounded, by the non-degeneracy of $L_0$, $\tilde{\phi}$ is a linear combination of $Z_j$, $j = 1,\cdots, 4$. From orthogonality conditions $\int_{\mathbb{R}^4}\tilde{\phi}\cdot Z_j = 0$, $j = 1,\cdots, 4$, we obtain $\tilde{\phi} = 0$, a contradiction. Thus,
$$\sup_{\tau_0 < \tau < \tau_1^k}\tau^{\nu_1'}|\phi_k(y,\tau)|\to 0.$$

\noindent {\bf Case 2.}
Suppose there exists $y_k$ with $|y_k|\to +\infty$ such that
\begin{equation*}
(\tau_2^k)^{\nu_1'}(1+|y_k|^{a_1})|\phi_k(y_k, \tau_2^k)|\geq \frac{1}{2}.
\end{equation*}
Let
\begin{equation*}
\tilde{\phi}_k(z, \tau):=(\tau_2^k)^{\nu_1'}|y_k|^{a_1}\phi_k(y_k+|y_k|z,|y_k|^{2}\tau + \tau_2^k).
\end{equation*}
Then
\begin{equation*}
\partial_\tau \tilde{\phi}_k = \Delta\tilde{\phi}_k + a_k\tilde{\phi}_k + \tilde{h}_k(z,\tau),
\end{equation*}
where
$$a_k=3U^2(y_k+|y_k|z)$$
and
\begin{equation*}
\tilde{h}_k(z,\tau) = (\tau_2^k)^{\nu_1'}|y_k|^{2+a_1}h_k(y_k+|y_k|z,|y_k|^{2}\tau + \tau_2^k)-(\tau_2^k)^{\nu_1'}|y_k|^{2+a_1}c(|y_k|^{2}\tau+\tau_2^k)Z_0(y_k+|y_k|z).
\end{equation*}
By the definition of $h_k$,
\begin{equation*}
|\tilde{h}_k(z,\tau)| \lesssim o(1)\frac{((\tau_2^k)^{-1}|y_k|^{2}\tau + 1)^{-\nu_1'}}{|\hat{y}_k+z|^{2+a_1}}
\end{equation*}
with
\begin{equation*}
\hat{y}_k = \frac{y_k}{|y_k|}\to -{\hat e}
\end{equation*}
and $|\hat e|= 1$. Thus $\tilde{h}_k(z,\tau)\to 0$ uniformly on compact subsets of $\mathbb{R}^4\setminus\{\hat e\}\times (-\infty, 0]$ and $a_k$ has the same property. Moreover, $|\tilde{\phi}_k(0, \tau_0)|\geq \frac{1}{2}$ and
\begin{equation*}
|\tilde{\phi}_k(z,\tau)| \lesssim \frac{((\tau_2^k)^{-1}|y_k|^{2}\tau + 1)^{-\nu_1'}}{|\hat{y}_k+z|^{a_1}}.
\end{equation*}
Hence we may assume $\tilde{\phi}_k\to \tilde{\phi}\neq 0$ uniformly on compact subsets of $\mathbb{R}^4\setminus\{{\hat e}\}\times (-\infty,0]$ with $\tilde{\phi}$ satisfying
\begin{equation}\label{e5:39}
\tilde{\phi}_\tau = \Delta\tilde{\phi}\quad\text{in }\mathbb{R}^4\setminus\{{\hat e}\}\times (-\infty,0]
\end{equation}
and
\begin{equation}\label{e5:40}
|\tilde{\phi}(z,\tau)|\leq |z-{e}|^{-a_1}\quad\text{in }\mathbb{R}^4\setminus\{{\hat e}\}\times (-\infty,0].
\end{equation}
{\bf Claim}: functions $\tilde{\phi}$ satisfying (\ref{e5:39}) and (\ref{e5:40}) are 0.

Without loss of generality, we assume ${\hat e} = 0$. Then
\begin{equation}\label{e5:41}
\left\{
\begin{aligned}
&\tilde{\phi}_\tau = \Delta\tilde{\phi},\quad\text{in }\mathbb{R}^4\setminus\{0\}\times (-\infty,0],\\
&|\tilde{\phi}(z,\tau)|\leq |z|^{-a_1},\quad\text{in }\mathbb{R}^4\setminus\{0\}\times (-\infty,0].
\end{aligned}
\right.
\end{equation}

We consider the function $\bar u(\rho,\tau) = (\rho^{2} + c\tau)^{-\frac{a_1}{2}} + \epsilon \rho^{-2}$ for some constant $c > 0$.
Direct computations give us
$$\bar u_{\tau}-\Delta \bar u=a_1(\rho^2+c\tau)^{-\frac{a_1}{2}-2}\left[(2-a_1-\frac{c}{2})\rho^2+(4c-\frac{c^2}{2})\tau\right].$$
Then we know that if $a_1<2$, we can always find $c>0$ such that $\bar u(\rho,\tau+M)$ is a super-solution, where $M$ is a large constant. Thus, $|\tilde{\phi}|\leq 2 \bar u(\rho,\tau+M)$. By letting $M\to \infty$ and the arbitrariness of $\epsilon$, we get $\tilde{\phi} = 0$, a contradiction. The proof is complete.
\end{proof}

\begin{proof}[Proof of Proposition \ref{propmode1}]
From Lemma \ref{lemmamode1}, for any $\tau_1 > \tau_0$ with $\tau_0$ fixed sufficiently large, we have
\begin{equation*}
|\phi^1(y,\tau)|\lesssim\tau^{-\nu_1'}(1+|y|)^{-a_1}\|h^1\|_{2+a_1, \tau_1}\text{ for all }\tau\in (\tau_0, \tau_1), \,\,y\in \mathbb{R}^4
\end{equation*}
and
\begin{equation*}
|c(\tau)|\leq \tau^{-\nu_1'}R^{a_1}\|h^1\|_{2+a_1,\tau_1}\text{ for all }\tau\in (\tau_0,\tau_1).
\end{equation*}
By assumption, $\|h^1\|_{\nu_1,2+a_1} < +\infty$ and $\|h^1\|_{2+a_1, \tau_1}\leq \|h^1\|_{\nu_1,2+a_1}$ for an arbitrary $\tau_1$. It then follows that
\begin{equation*}
|\phi^1(y,\tau)|\lesssim\tau^{-\nu_1'}(1+|y|)^{-a_1}\|h^1\|_{\nu_1,2+a_1}\text{ for all }\tau\in (\tau_0, \tau_1),\,\, y\in \mathbb{R}^4
\end{equation*}
and
\begin{equation*}
|c(\tau)|\leq \tau^{-\nu_1'}R^{a_1}\|h^1\|_{\nu_1,2+a_1}\text{ for all }\tau\in (\tau_0, \tau_1).
\end{equation*}
By the arbitrariness of $\tau_1$, we have
\begin{equation*}
|\phi^1(y,\tau)|\lesssim\tau^{-\nu_1'}(1+|y|)^{-a_1}\|h^1\|_{\nu_1,2+a_1}\text{ for all }\tau\in (\tau_0, +\infty),\,\, y\in \mathbb{R}^4
\end{equation*}
and
\begin{equation*}
|c(\tau)|\leq \tau^{-\nu_1'}R^{a_1}\|h\|_{\nu_1,2+a_1}\text{ for all }\tau\in (\tau_0, +\infty).
\end{equation*}
The gradient estimates follows from the scaling argument and the standard parabolic theory. The proof is complete.
\end{proof}

\medskip

\noindent {\bf 3. Construction at higher modes $j\geq 5$.}

\medskip

For higher modes $j\geq 5$, we recall that
$$h^{\perp}=\sum\limits_{j=5}^{\infty} h_j(r,t)\Theta_j,~\phi^{\perp}[h^{\perp}]=\sum\limits_{j=5}^{\infty} \phi_j(r,t)\Theta_j$$
and let $\phi^{\perp}[h^{\perp}]$ solve the following problem
\begin{equation*}
\begin{cases}
\la^2\phi_t=\Delta_y \phi +3U^2(y)\phi+h^{\perp},~&\mbox{ in } \mathcal D_{2R}\times(0,T),\\
\phi=0,~&\mbox{ on } \partial \mathcal D_{2R}\times (0,T),\\
\phi(\cdot,0)=0,~&\mbox{ in } \mathcal D_{2R}.\\
\end{cases}
\end{equation*}
Similarly, it follows from \cite[Section 7]{Green16JEMS} that
\begin{equation}\label{highermodes}
|\phi^{\perp}(y,t)|+(1+|y|)|\nabla\phi^{\perp}(y,t)|\lesssim \la_*^{\nu} \frac{1}{1+|y|^a}\|h^{\perp}\|_{\nu,2+a}.
\end{equation}

\begin{proof}[Proof of Proposition \ref{lineartheory}]
Recall that
$$\phi[h]=\phi^0[h^0]+\phi^1[h^1]+\phi^{\perp}[h^{\perp}].$$
The validity of Proposition \ref{lineartheory} is concluded from Proposition \ref{propmode0}, Proposition \ref{propmode1} and \eqref{highermodes}. The proof is complete.
\end{proof}

\medskip


\section{Solving the inner--outer gluing system}\label{sec-innerouter}

\medskip

In this section, we shall solve the inner--outer gluing system by the linear theories developed in Section \ref{sec-linearouter} and Section \ref{sec-linearinner}, and the Schauder fixed point theorem. Our goal is to find a solution $(\phi^0,\phi^1,\phi^{\perp},\psi,\la,\xi)$ to the inner--outer gluing system in Section \ref{sec-inneroutergluingscheme} so that the desired blow-up solution is constructed. We shall solve the inner--outer gluing system in the function space $\mathcal X$ defined in \eqref{def-mX}. First, we make some assumptions about the parameter functions.

We write
$$\la_*(t)=\frac{|\log T|(T-t)}{|\log(T-t)|^2}$$
and assume that for some numbers $c_1,c_2>0$,
$$c_1|\dot\la_*(t)|\leq |\dot\la(t)|\leq c_2 |\dot\la_*(t)|~\mbox{ for all }~t\in(0,T).$$
Recall that we take $R(t)=\la_*^{-\beta}(t)$ for $\beta\in(0,1/2).$

In Section \ref{subsec-outer} and Section \ref{subsec-inner}, for given $\|\phi^0\|_{0,\sigma,\nu,a},~\|\phi^1\|_{\nu_1,a_1},~\|\phi^{\perp}\|_{\nu,a},~\|\psi\|_*,~\|Z^*\|_{\infty},~\|\la\|_F,~\|\xi\|_G$ bounded,
we shall first estimate right hand sides $\mathcal G(\phi,\psi,\la,\xi)$ and $\mathcal H(\phi,\psi,\la,\xi)$ in the inner and outer problems. Here the above norms are defined in \eqref{def-normm0}, \eqref{def-normnua}, \eqref{def-norm*}, \eqref{def-normla} and \eqref{def-normxi}.

\medskip

\subsection{The outer problem: estimates of \texorpdfstring{$\mathcal G$}{G}}\label{subsec-outer}

\medskip

Recall from \eqref{outer} that the outer problem
\begin{equation*}
\psi_t =\Delta_{(r,z)} \psi+\frac{n-4}{r}\partial_r \psi  + \mathcal G(\phi,\psi,\la,\xi)~~\mbox{ in }\mathcal D\times (0,T)
\end{equation*}
where
\begin{equation*}
\begin{aligned}
\mathcal G(\phi,\psi,\la,\xi):=&~3\la^{-2}(1-\eta_R)U^2(y)(\psi+Z^*+\eta_*\Psi_0+\Psi_1)\\
&~+\la^{-3}\left[(\Delta_y \eta_R) \phi+2\nabla_y\eta_R\cdot\nabla_y \phi-\la^2\phi\partial_t \eta_R\right]+\frac{(n-4)\la^{-1}}{r}\phi\partial_r \eta_R\\
&~+(1-\eta_R)\mathcal K[\la,\xi]+\mathcal S_{\rm out}[\la,\xi]-\mathcal S_{\rm out}[\la_*,\xi_*]+(1-\eta_R)\mathcal N(\mathtt w)
\end{aligned}
\end{equation*}
with $\mathcal S_{\rm out}$ defined in \eqref{def-Sout}, $\mathcal K[\la,\xi]$ defined in \eqref{def-mK} and
\begin{equation*}
\mathcal N(\mathtt w):=(U^*+\mathtt w)^3-(U^*)^3-3U_{\la,\xi}^2\mathtt w.
\end{equation*}

In order to apply the linear theory Proposition \ref{outer-apriori}, we estimate all the terms in $\mathcal G(\phi,\psi,\la,\xi)$. Define
$$\mathcal G(\phi,\psi,\la,\xi)=g_1+g_2+g_3$$
with
$$
\begin{aligned}
&g_1:=3\la^{-2}(1-\eta_R)U^2(y)(\psi+Z^*+\eta_*\Psi_0+\Psi_1),\\
&g_2:=\la^{-3}\left[(\Delta_y \eta_R) \phi+2\nabla_y\eta_R\cdot\nabla_y \phi-\la^2\phi\partial_t\eta_R \right]+\frac{(n-4)\la^{-1}}{r}\,\phi\partial_{r}\eta_R,\\
&g_3:=(1-\eta_R)\mathcal K[\la,\xi]+\mathcal S_{\rm out}[\la,\xi]-\mathcal S_{\rm out}[\la_*,\xi_*]+(1-\eta_R)\mathcal N(\mathtt w).\\
\end{aligned}
$$
To estimate $g_1$, we need to estimate the corrections $\Psi_0$ and $\Psi_1$ defined in \eqref{def-Psi0} and \eqref{def-Psi1}, respectively.

\medskip

\noindent {\bf Estimates of $\Psi_0$ and $\partial_r \Psi_0$}

\medskip

We first estimate the size of $\Psi_0$. Decompose
\begin{equation}\label{est-Psi01}
\begin{aligned}
\Psi_0=-\alpha_0\int_{-T}^t \dot\la(s)\frac{1-e^{-\frac{\zeta^2}{4(t-s)}}}{\zeta^2} ds=-\alpha_0\left(\int_{-T}^{t-\frac{\zeta^2}{4}}+\int_{t-\frac{\zeta^2}{4}}^t\right) \dot\la(s)\frac{1-e^{-\frac{\zeta^2}{4(t-s)}}}{\zeta^2} ds.
\end{aligned}
\end{equation}
For the first integral above, we have two cases
\begin{itemize}
\item For $T-t>\frac{\zeta^2}{4}$, we further decompose
\begin{equation*}
\begin{aligned}
\int_{-T}^{t-\frac{\zeta^2}{4}} \dot\la(s)\frac{1-e^{-\frac{\zeta^2}{4(t-s)}}}{\zeta^2} ds=\left(
\int_{-T}^{t-(T-t)}+\int_{t-(T-t)}^{t-\frac{\zeta^2}{4}}\right)\dot\la(s)\frac{1-e^{-\frac{\zeta^2}{4(t-s)}}}{\zeta^2} ds.
\end{aligned}
\end{equation*}
Since $T-s<2(t-s)$ and $\frac{\zeta^2}{4(t-s)}<1$, the first integral above can be evaluated as
\begin{equation}\label{est-Psi02}
\begin{aligned}
&\quad\int_{-T}^{t-(T-t)}\dot\la(s)\frac{1-e^{-\frac{\zeta^2}{4(t-s)}}}{\zeta^2} ds\lesssim \int_{-T}^{t-(T-t)}\frac{|\dot \la(s)|}{T-s} ds\\
&\lesssim |\log T|\int_{-T}^{t-(T-t)}\frac{1}{(T-s)|\log(T-s)|^2} ds\\
&\lesssim 1.
\end{aligned}
\end{equation}
Similarly, for the second integral
\begin{equation}\label{est-Psi03}
\begin{aligned}
&\quad \int_{t-(T-t)}^{t-\frac{\zeta^2}{4}} \dot\la(s)\frac{1-e^{-\frac{\zeta^2}{4(t-s)}}}{\zeta^2} ds\lesssim \int_{t-(T-t)}^{t-\frac{\zeta^2}{4}}\frac{|\log T|}{(t-s)|\log (T-s)|^2} ds\\
&\lesssim \frac{|\log T|}{|\log(T-t)|^2}\left|\log\left(\frac{\zeta^2}{4}\right)-\log(T-t)\right|\\
&\lesssim |\dot\la|\left[\log(\rho^2+\la^2)+1\right].
\end{aligned}
\end{equation}

\item For $T-t<\frac{\zeta^2}{4}$, since $s<t-\frac{\zeta^2}{4}<t-(T-t)$, we compute
\begin{equation}\label{est-Psi04}
\int_{-T}^{t-\frac{\zeta^2}{4}}\frac{|\dot\la(s)|}{t-s}ds\lesssim \int_{-T}^{t-(T-t)} \frac{|\dot\la(s)|}{T-s}ds\lesssim 1.
\end{equation}
\end{itemize}

Then we evaluate
\begin{equation}\label{est-Psi05}
\int_{t-\frac{\zeta^2}{4}}^t \dot\la(s)\frac{1-e^{-\frac{\zeta^2}{4(t-s)}}}{\zeta^2} ds \lesssim \frac{1}{\zeta^2} \int_{t-\frac{\zeta^2}{4}}^t |\dot\la(s)|ds\lesssim 1.
\end{equation}
Combining \eqref{est-Psi01}--\eqref{est-Psi05}, we conclude that
\begin{equation}\label{est-Psi0}
|\Psi_0|\lesssim |\dot\la|\left[\log(\rho^2+\la^2)+1\right].
\end{equation}

Then we want to compute the size of $\partial_r \Psi_0$. Similarly, we have
\begin{equation}\label{drpsi01}
\begin{aligned}
\partial_r \Psi_0=&~-\alpha_0\int_{-T}^t \dot\la(s) k_{\zeta}(\zeta(\rho,t),t-s)\zeta_r ds\\
\lesssim&~(r-\sqrt{2(n-4)(T-t)})\int_{-T}^t \dot\la(s) \frac{\left(\frac{\zeta^2 e^{-\frac{\zeta^2}{4(t-s)}}}{2(t-s)}-2+2e^{-\frac{\zeta^2}{4(t-s)}}\right)}{\zeta^4} ds\\
=&~(r-\sqrt{2(n-4)(T-t)})\left(\int_{-T}^{t-\frac{\zeta^2}{4}}+\int_{t-\frac{\zeta^2}{4}}^t \right)\dot\la(s) \frac{\left(\frac{\zeta^2 e^{-\frac{\zeta^2}{4(t-s)}}}{2(t-s)}-2+2e^{-\frac{\zeta^2}{4(t-s)}}\right)}{\zeta^4} ds.\\
\end{aligned}
\end{equation}
For the first integral, we decompose
\begin{equation}\label{drpsi02}
\begin{aligned}
&\quad\int_{-T}^{t-\frac{\zeta^2}{4}} \dot\la(s) \frac{\left(\frac{\zeta^2 e^{-\frac{\zeta^2}{4(t-s)}}}{2(t-s)}-2+2e^{-\frac{\zeta^2}{4(t-s)}}\right)}{\zeta^4} ds\\
&=\left(\int_{-T}^{t-(T-t)}+\int_{t-(T-t)}^{t-\frac{\zeta^2}{4}} \right)\dot\la(s) \frac{\left(\frac{\zeta^2 e^{-\frac{\zeta^2}{4(t-s)}}}{2(t-s)}-2+2e^{-\frac{\zeta^2}{4(t-s)}}\right)}{\zeta^4} ds\\
&\lesssim \left(\int_{-T}^{t-(T-t)}+\int_{t-(T-t)}^{t-\frac{\zeta^2}{4}} \right)\dot\la(s)\left[\frac{1-\frac{\zeta^2}{4(t-s)}}{2\zeta^2(t-s)}-\frac{1}{2\zeta^2(t-s)}\right]ds\\
&= \left(\int_{-T}^{t-(T-t)}+\int_{t-(T-t)}^{t-\frac{\zeta^2}{4}} \right)\frac{\dot\la(s)}{(t-s)^2}ds\\
&\lesssim \int_{-T}^{t-(T-t)} \frac{\dot\la(s)}{(T-s)^2}ds +\int_{t-(T-t)}^{t-\frac{\zeta^2}{4}} \frac{\dot\la(s)}{(t-s)^2}ds\\
&\lesssim \frac{|\dot\la|}{T-t}+\frac{|\dot\la|}{\rho^2+\la^2}.
\end{aligned}
\end{equation}
On the other hand, one has
\begin{equation}\label{drpsi03}
\begin{aligned}
\int_{t-\frac{\zeta^2}{4}}^t \dot\la(s) \frac{\left(\frac{\zeta^2 e^{-\frac{\zeta^2}{4(t-s)}}}{2(t-s)}-2+2e^{-\frac{\zeta^2}{4(t-s)}}\right)}{\zeta^4} ds \lesssim \frac{|\dot\la|}{\rho^2+\la^2}.
\end{aligned}
\end{equation}
By \eqref{drpsi01}, \eqref{drpsi02} and \eqref{drpsi03}, we obtain
\begin{equation}\label{est-drPsi0}
\partial_r \Psi_0 \lesssim (r-\sqrt{2(n-4)(T-t)}) \max\left\{\frac{|\dot\la|}{T-t},\frac{|\dot\la|}{\rho^2+\la^2}\right\}.
\end{equation}

\medskip

\noindent {\bf Estimate of $\Psi_1$}

\medskip

Similar to the proof of Lemma \ref{lemma-rhs1}, Lemma \ref{lemma-rhs2} and Lemma \ref{lemma-rhs3} in the Appendix, the correction term $\Psi_1$ given by \eqref{def-Psi1} can be estimated as
\begin{equation}\label{duha-Psi1}
|\Psi_1|\lesssim \int_0^t\int_{\R^n} \frac{e^{-\frac{|x-w|^2}{4(t-s)}}}{(t-s)^{n/2}}\mathcal S_{\rm out}[\la_*,\xi_*](w,s)dw ds,
\end{equation}
where
\begin{equation*}
\begin{aligned}
\mathcal S_{\rm out}[\la_*,\xi_*]:=&~U_{\la_*,\xi_*}(\Delta_{(r,z)}\eta_*-\partial_t \eta_*)+2\nabla \eta_*\cdot \nabla U_{\la_*,\xi_*}+(\eta_*^3 -\eta_* )U_{\la_*,\xi_*}^3 +U_{\la_*,\xi_*}\frac{n-4}r \partial_r \eta_*\\
&~+\frac{n-4}{r}\Psi_0[\la_*]\partial_r \eta_*+\Psi_0[\la_*]\Delta_{(r,z)}\eta_*+2\nabla \eta_*\cdot\nabla \Psi_0[\la_*] -\Psi_0[\la_*]\partial_t \eta_*\\
&~+\frac{n-4}{r}\eta_*(1-\eta_R)\partial_r \Psi_0[\la_*]+\eta_*(1-\eta_R)\frac{n-4}{\la_* y_1+\xi_{r,*}}\la_*^{-2}\partial_{y_1} U(y)\\
&~+\eta_*(1-\eta_R)\la_*^{-2}\nabla U\cdot \dot\xi_*.
\end{aligned}
\end{equation*}
Here we take the term
$$U_{\la_*,\xi_*}\Delta_{(r,z)}\eta_*+\frac{n-4}{r}\eta_*(1-\eta_R)\partial_r \Psi_0[\la_*]+\eta_*(1-\eta_R)\left(\frac{n-4}{\la_* y_1+\xi_{r,*}}\la_*^{-2}\partial_{y_1} U(y)+\la_*^{-2}\nabla U\cdot \dot\xi_*\right)$$
in $\mathcal S_{\rm out}[\la_*,\xi_*]$ as an example, and all the estimates for other terms can be carried out in a similar manner. By the definition of $\eta_*$ in \eqref{def-eta*} and \eqref{est-drPsi0}, we have
\begin{equation*}
\begin{aligned}
&\quad \left|U_{\la_*,\xi_*}\Delta_{(r,z)}\eta_*+\frac{n-4}{r}\eta_*(1-\eta_R)\partial_r \Psi_0[\la_*]\right|\\
&\lesssim \frac{\la_*}{(T-t)^2}\chi_{\{\rho\sim\sqrt{T-t}\}}+\frac{1}{\la_* R\sqrt{T-t}}\chi_{\{\la_*R\lesssim\rho\lesssim\sqrt{T-t}\}}.
\end{aligned}
\end{equation*}
On the other hand, thanks to our choice
$$\xi_{r,*}(t)=\sqrt{2(n-4)(T-t)},$$
we have
$$\dot\xi_{r,*}+\frac{n-4}{\la_* y_1+\xi_{r,*}}= -\frac{(n-4)\la_* y_1}{\xi_{r,*}(\la_* y_1+\xi_{r,*})},$$
and thus
\begin{equation*}
\begin{aligned}
&\quad \left|\eta_*(1-\eta_R)\left(\frac{n-4}{\la_* y_1+\xi_{r,*}}\la_*^{-2}\partial_{y_1} U(y)+\la_*^{-2}\nabla U\cdot \dot\xi_*\right)\right|\\
&\lesssim \frac{\la_*^{-1}y_1^2}{\xi_{r,*}(\la_* y_1+\xi_{r,*})(1+|y|^2)^2}\chi_{\{R\leq |y|\leq \frac{2\delta\sqrt{T-t}}{\la_*}\}}\\
&\lesssim \frac{\la_*^{-1}}{(T-t)R^2}.
\end{aligned}
\end{equation*}
Then from \eqref{duha-Psi1}, we obtain
\begin{equation}\label{est-Psi1}
\begin{aligned}
|\Psi_1|\lesssim& ~\int_0^t \frac{\la_*(s)}{(T-s)^2}+\frac{1}{\la_*(s) R(s)\sqrt{T-s}}+\frac{\la_*^{-1}(s)}{(T-s)R^2(s)} ~ds\\
\lesssim&~ |\log(T-t)|\la_*^{2\beta-1}(t),
\end{aligned}
\end{equation}
where we have used $R(t)=\la_*^{-\beta}(t)$.

\medskip

\noindent {\bf Estimate of $g_1$.}

\medskip

Since we shall solve $\psi$ in the function space $X_{\psi}$ defined in \eqref{def-fcnspaces}, we get
\begin{equation*}
\begin{aligned}
g_1=&~3\la^{-2}(1-\eta_R)U^2(y)(\psi+Z^*+\eta_*\Psi_0+\Psi_1)\\
\lesssim&~\frac{R^{-2}(t)\la_*^{\nu-2+\frac4n}(0)R^{-2-\alpha+\frac8n}(0)|\log T|}{|(r,z)-\xi(t)|^2}\chi_{\{|(r,z)-\xi(t)|\geq\la_*R\}}\|\psi\|_*\\
&~+\frac{R^{-2}(t)}{|(r,z)-\xi(t)|^2}\chi_{\{|(r,z)-\xi(t)|\geq\la_*R\}}\|Z^*\|_{\infty}+\frac{R^{-2}(t)|\log(T-t)|}{|(r,z)-\xi(t)|^2}\chi_{\{|(r,z)-\xi(t)|\geq\la_*R\}}\|\la\|_{\infty}\\
&~+\frac{R^{-2}(t)\la_*^{2\beta-1}(t)|\log(T-t)|}{|(r,z)-\xi(t)|^2}\chi_{\{|(r,z)-\xi(t)|\geq\la_*R\}}
\end{aligned}
\end{equation*}
by using \eqref{est-Psi1}. So by the choice of the weight $\varrho_2$ as in \eqref{def-weights}, we have
\begin{equation}\label{g1**}
\|g_1\|_{**}\lesssim T^{\epsilon_0}(\|\psi\|_*+\|Z^*\|_{\infty}+\|\la\|_{\infty}+1)
\end{equation}
provided
\begin{equation}\label{g1**cond}
\begin{cases}
\nu-2+\frac4n+\beta(4+\alpha-\frac8n)-\nu_2>0,\\
2\beta-\nu_2>0,\\
4\beta-1-\nu_2>0.\\
\end{cases}
\end{equation}
Here $\epsilon_0$ is a small positive number.

\medskip

\noindent {\bf Estimate of $g_2$.}

\medskip

Thanks to the cut-off, $g_2$ is supported in
$$\left\{(r,z,t)\in\mathcal D\times(0,T):~\la_*R\leq|(r,z)-\xi(t)|\leq 2\la_*R,~t\in(0,T)\right\},$$
and we have
\begin{equation*}
\begin{aligned}
&\quad g_2= \la^{-3}\left[(\Delta_y \eta_R) \phi+2\nabla_y\eta_R\cdot\nabla_y \phi-\la^2(\partial_t\eta_R)\phi\right]+\frac{(n-4)\la^{-1}}{r}\,\phi\partial_{r}\eta_R\\
&\lesssim \la_*^{\nu-3}R^{-2-a}\log R\left(1+r-\sqrt{2(n-4)(T-t)}+\dot\la_* \la_* R^2\right)\chi_{\{|(r,z)-\xi(t)|\sim\la_*R\}}\\
&\qquad \times\left(\|\phi^0\|_{0,\sigma,\nu,a}+\|\phi^1\|_{\nu_1,a_1}+\|\phi^{\perp}\|_{\nu,a}\right)\\
&\quad +\frac{\la_*^{-2}}{\sqrt{T-t}}\left(\frac{\la_*^{\nu}\log R}{1+R^{1+a}}\|\phi^0\|_{0,\sigma,\nu,a}+\frac{\la_*^{\nu_1}}{1+R^{1+a_1}}\|\phi^1\|_{\nu_1,a_1}+\frac{\la_*^{\nu}}{1+R^{1+a}}\|\phi^{\perp}\|_{\nu,a}\right)\\
&\lesssim (R^{\alpha-a}\log R+\la_*^{1/2}R \log R+\la_*^{\nu_1+1/2-\nu}R^{1+a-a_1})\varrho_1\left(\|\phi^0\|_{0,\sigma,\nu,a}+\|\phi^1\|_{\nu_1,a_1}+\|\phi^{\perp}\|_{\nu,a}\right).
\end{aligned}
\end{equation*}
So it follows that
\begin{equation}\label{g2**}
\left\|g_2\right\|_{**}\lesssim T^{\epsilon_0}\left(\|\phi^0\|_{0,\sigma,\nu,a}+\|\phi^1\|_{\nu_1,a_1}+\|\phi^{\perp}\|_{\nu,a}\right)
\end{equation}
provided
\begin{equation}\label{g2**cond}
\begin{cases}
0<\alpha<a<1,\\
\beta<\frac12,\\
\beta(1+\alpha-a_1)+\nu-\nu_1-\frac12<0.\\
\end{cases}
\end{equation}
Here $\epsilon_0$ is a small positive number.

\medskip

\noindent {\bf Estimate of $g_3$.}

\medskip

We want to estimate $g_3=(1-\eta_R)\mathcal K_1[\la,\xi]+\mathcal S_{\rm out}[\la,\xi]-\mathcal S_{\rm out}[\la_*,\xi_*]+(1-\eta_R)\mathcal N(\mathtt w)$. Recall from \eqref{def-mK1} that
$$\mathcal K_1[\la,\xi]=\eta_*\left[\frac{2\alpha_0 \la^{-2}(t)\dot{\la}(t)}{(1+|y|^2)^2}-\mathcal R[\la]\right].$$
We first estimate $\mathcal R[\la]$ defined in \eqref{def-mR}. Recalling
\begin{equation*}
\begin{aligned}
\mathcal R[\la]=&~\alpha_0\left[\frac{(r-\xi_r)\dot{\xi}_r+(z-\xi_z)\dot{\xi}_z-\la(t)\dot{\la}(t)}{\la(t)(1+|y|^2)^{1/2}}\right]\int_{-T}^t \dot{\la}(s) k_{\zeta}(\zeta,t-s)ds\\
&~+\frac{\alpha_0}{\la(t)(1+|y|^2)^{3/2}}\int_{-T}^t \dot{\la}(s)\left[-\zeta k_{\zeta\zeta}(\zeta,t-s)+k_\zeta(\zeta,t-s)\right]ds\\
\end{aligned}
\end{equation*}
and
$$k(\zeta,t):=\frac{1-e^{-\frac{\zeta^2}{4t}}}{\zeta^2},$$
we have
\begin{equation}\label{est-mRnew11}
\begin{aligned}
\int_{-T}^t \dot{\la}(s) k_{\zeta}(\zeta,t-s)ds=&~\int_{-T}^t\dot\la(s)\left[\frac{e^{-\frac{\zeta^2}{4(t-s)}}}{2\zeta(t-s)}-\frac{2\left(1-e^{-\frac{\zeta^2}{4(t-s)}}\right)}{\zeta^3}\right] ds\\
\lesssim&~\left(\int_{-T}^{t-(T-t)}+\int_{t-(T-t)}^{t-\frac{\zeta^2}{4}}\right)\dot\la(s)\frac{\zeta}{(t-s)^2} ds+\int_{t-\frac{\zeta^2}{4}}^t \frac{\dot\la(s)}{\zeta^3}ds\\
\lesssim&~\int_{-T}^{t-(T-t)}\dot\la(s)\frac{\zeta}{(T-s)^2}ds + \int_{t-(T-t)}^{t-\frac{\zeta^2}{4}} \dot\la(s)\frac{\zeta}{(t-s)^2} ds +\frac{|\dot\la|}{\zeta}\\
\lesssim&~\frac{|\dot\la|\zeta}{T-t}+\frac{|\dot\la|}{\zeta}
\end{aligned}
\end{equation}
and
\begin{equation}\label{est-mRnew22}
\begin{aligned}
&\quad \int_{-T}^t \dot{\la}(s)\left[-\zeta k_{\zeta\zeta}(\zeta,t-s)+k_\zeta(\zeta,t-s)\right]ds\\
&=\int_{-T}^t \dot\la(s) \left[\frac{2e^{-\frac{\zeta^2}{4(t-s)}}}{\zeta(t-s)}-\frac{8\left(1-e^{-\frac{\zeta^2}{4(t-s)}}\right)}{\zeta^3}+\frac{\zeta e^{-\frac{\zeta^2}{4(t-s)}}}{4(t-s)^2}\right] ds\\
&\lesssim \left(\int_{-T}^{t-(T-t)}+\int_{t-(T-t)}^{t-\frac{\zeta^2}{4}}\right)\dot\la(s)\left(\frac{\zeta}{(t-s)^2}+\frac{\zeta^3}{(t-s)^3} \right)ds+\int_{t-\frac{\zeta^2}{4}}^t \frac{\dot\la(s)}{\zeta^3}ds\\
&\lesssim \frac{|\dot\la|\zeta}{T-t}+\frac{|\dot\la|\zeta^3}{(T-t)^2}+\frac{|\dot\la|}{\zeta}.
\end{aligned}
\end{equation}
From \eqref{est-mRnew11} and \eqref{est-mRnew22}, we obtain
\begin{equation}\label{est-mR*}
\begin{aligned}
\mathcal R[\la] \lesssim |\dot\la|\left(\frac{1}{(T-t)(1+|y|^2)}+\frac{\la^2}{(T-t)^2}+\frac{1}{\la^2(1+|y|^2)^2}+\frac{\la(y\cdot \dot\xi+\dot\la)}{T-t}+\frac{y\cdot \dot\xi+\dot\la}{\la(1+|y|^2)}\right).
\end{aligned}
\end{equation}

Now we evaluate the first term $(1-\eta_R)\mathcal K_1[\la,\xi]$ in $g_3$ by using \eqref{est-mR*}. Thanks to the cut-off $(1-\eta_R)\eta_*$, the term $(1-\eta_R)\mathcal K_1[\la,\xi]$ is supported in
$$\left\{(r,z,t)\in\mathcal D\times (0,T):\la_*R\leq |(r,z)-\xi(t)|\leq 2\delta\sqrt{T-t}\right\}.$$
So we get
\begin{equation*}
\begin{aligned}
(1-\eta_R)\mathcal K_1[\la,\xi]=&~(1-\eta_R)\eta_* \left[ \frac{2\alpha_0 \la^{-2}(t)\dot{\la}(t)}{(1+|y|^2)^2}-\mathcal R[\la] \right]\\
\lesssim&~ R^{-2}\la_*^{-\nu_2}\|\la\|_{\infty} \varrho_2+\|\la\|_{\infty}\la_*^{1-\nu_2}\varrho_2(1+\|\xi\|_G) +T^{\epsilon_0}\|\la\|_{\infty} \varrho_3,\\
\end{aligned}
\end{equation*}
where the $\|\cdot\|_G$-norm is defined in \eqref{def-normxi}.
We see that if
\begin{equation}\label{g3**1cond}
\begin{cases}
\nu_2-2\beta<0\\
\nu_2-1<0\\
\end{cases}
\end{equation}
then one has
\begin{equation}\label{g3**1}
\|(1-\eta_R)\mathcal K_1[\la,\xi]\|_{**}\lesssim T^{\epsilon_0}\left(\|\la\|_{\infty}+\|\xi\|_{G}+1\right)
\end{equation}
for some $\epsilon_0>0$.

Next, we consider $\mathcal S_{\rm out}[\la,\xi]-\mathcal S_{\rm out}[\la_*,\xi_*]$. Similarly, direct computations yield that
\begin{equation*}
\begin{aligned}
\left|\mathcal S_{\rm out}[\la,\xi]-\mathcal S_{\rm out}[\la_*,\xi_*]\right|\lesssim&~ \frac{\la_*^{1+\sigma'}(t)}{T-t}\frac{1}{|(r,z)-\xi(t)|^2}\chi_{\{|(r,z)-\xi(t)|\geq\la_*R\}}\\
\lesssim&~ T^{\epsilon_0} \varrho_2
\end{aligned}
\end{equation*}
where $\sigma',\epsilon>0$. So we get
\begin{equation}\label{g3**2}
\|\mathcal S_{\rm out}[\la,\xi]-\mathcal S_{\rm out}[\la_*,\xi_*]\|_{**}\lesssim T^{\epsilon_0}.
\end{equation}

Finally, we compute the nonlinear terms
\begin{equation*}
\begin{aligned}
(1-\eta_R)\mathcal N(\mathtt w)=&~(1-\eta_R)\left((U^*+\mathtt w)^3-(U^*)^3-3U_{\la,\xi}^2\mathtt w \right) \lesssim (1-\eta_R)U^* \mathtt w^2\\
\lesssim&~\frac{\la^{-1}}{1+|y|^2}\left[(\la^{-1}\eta_R\phi)^2+(\eta_*\Psi_0)^2+\psi^2+(Z^*)^2+\Psi_1^2\right](1-\eta_R)\\
\lesssim&~\la_*^{\nu}R^{2\sigma(4-a)+\alpha-8}(\log R)^2\|\phi^0\|^2_{0,\sigma,\nu,a}\varrho_1+\la_*^{2\nu_1-\nu}R^{\alpha-2a_1}\|\phi^1\|^2_{\nu_1,a_1}\varrho_1\\
&~+\la_*^{\nu}R^{\alpha-2a}\|\phi^{\perp}\|^2_{\nu,a}\varrho_1+|\log(T-t)|^2|\dot\la_*|^2\la_*^{1-\nu_2}\varrho_2\\
&~+\la_*^{1-\nu_2}(t)\la_*^{2\nu-4+\frac8n}(0)R^{-4-2\alpha+\frac{16}n}(0)|\log T|^2\varrho_2\|\psi\|_*^2+\la^{1-\nu_2}\varrho_2\|Z^*\|_{\infty}^2\\
&~+\la_*^{4\beta-\nu_2-1}|\log(T-t)|^2\varrho_2.\\
\end{aligned}
\end{equation*}
Therefore, we obtain that for $\epsilon_0>0$
\begin{equation}\label{g3**3}
\|(1-\eta_R)\mathcal N(\mathtt w)\|_{**}\lesssim T^{\epsilon_0}\left(\|\phi^0\|_{0,\sigma,\nu,a}^2+\|\phi^1\|^2_{\nu_1,a_1}+\|\phi^{\perp}\|_{\nu,a}^2+\|\psi\|_*^2+\|Z^*\|_{\infty}^2+\|\la\|_{\infty}+1\right)
\end{equation}
provided
\begin{equation}\label{g3**3cond}
\begin{cases}
\nu-\beta\left(2\sigma(4-a)+\alpha-8\right)>0,\\
2\nu_1-\nu-\beta(\alpha-2a_1)>0,\\
\nu-\beta(\alpha-2a)>0,\\
\nu_2<1,\\
2\nu-3+\frac8n-\nu_2+\beta(4+2\alpha-\frac{16}n)>0,\\
4\beta-\nu_2-1>0.\\
\end{cases}
\end{equation}

Collecting \eqref{g1**}, \eqref{g1**cond}, \eqref{g2**}, \eqref{g2**cond}, \eqref{g3**1}, \eqref{g3**1cond}, \eqref{g3**2}, \eqref{g3**3} and \eqref{g3**3cond},  we conclude that for a fixed number $\epsilon_0>0$
\begin{equation}\label{outer-contraction}
\|\mathcal G\|_{**}\lesssim T^{\epsilon_0}\left(\|\psi\|_*+\|Z^*\|_{\infty}+\|\phi^0\|_{0,\sigma,\nu,a}+\|\phi^1\|_{\nu_1,a_1}+\|\phi^{\perp}\|_{\nu,a}+\|\la\|_{\infty}+\|\xi\|_{G}+1\right)
\end{equation}
with the parameters $\beta,a,a_1,\alpha,\nu,\nu_1,\nu_2,\sigma$ chosen in the following range
\begin{equation}\label{cond-outer-contraction}
\begin{cases}
\nu-2+\frac4n+\beta(4+\alpha-\frac8n)-\nu_2>0,\\
2\beta-\nu_2>0,\\
4\beta-\nu_2-1>0,\\
0<\alpha<a<1,\\
\beta<\frac12,\\
\beta(1+\alpha-a_1)+\nu-\nu_1-\frac12<0,\\
\nu-\beta\left(2\sigma(4-a)+\alpha-8\right)>0,\\
2\nu_1-\nu-\beta(\alpha-2a_1)>0,\\
\nu-\beta(\alpha-2a)>0,\\
\nu_2<1,\\
2\nu-3+\frac8n-\nu_2+\beta(4+2\alpha-\frac{16}n)>0.\\
\end{cases}
\end{equation}

\medskip

\subsection{The inner problems: estimates of \texorpdfstring{$\mathcal H^0$}{H0}, \texorpdfstring{$\mathcal H^1$}{H1} and \texorpdfstring{$\mathcal H^{\perp}$}{Hperp}}\label{subsec-inner}

\medskip

Recall from \eqref{inner} that the inner problem is the following
\begin{equation*}
\begin{aligned}
\la^2 \phi_t = \Delta_y \phi +3U^2(y)\phi+\mathcal H(\phi,\psi,\la,\xi)~~\mbox{ in }\mathcal D_{2R}\times (0,T)
\end{aligned}
\end{equation*}
where
\begin{equation*}
\begin{aligned}
\mathcal H(\phi,\psi,\la,\xi)(y,t):=&~3\la U^{2}(y)[\eta_*\Psi_0+\psi+Z^*](\la y+\xi,t)+\frac{(n-4)\la}{\la y_1+\xi_r}\phi_{y_1}\\
&~+\la\left[\dot\la (\nabla_y\phi\cdot y+\phi)+\nabla_y \phi\cdot\dot\xi\right]+\frac{(n-4)\la^3}{r}\eta_*\partial_r\Psi_0\\
&~+\la^3 \mathcal N(\mathtt w)+\la^3 \mathcal K[\la,\xi]+3\la U^{2}(y)\Psi_1
\end{aligned}
\end{equation*}
with $\mathcal K[\la,\xi]$ defined in \eqref{def-mK}. Since the inner--outer gluing relies on delicate analysis of the space-time decay of solutions, we further decompose the inner problem \eqref{inner} into three different spherical harmonic modes
\begin{equation*}
\begin{cases}
\la^2 \phi^0_t = \Delta_y \phi^0 +3U^2(y)\phi^0+\mathcal H^0(\phi,\psi,\la,\xi)~~&\mbox{ in }\mathcal D_{2R}\times (0,T)\\
\phi^0(\cdot,0)=0~~&\mbox{ in }\mathcal D_{2R}\\
\end{cases}
\end{equation*}

\begin{equation*}
\begin{cases}
\la^2 \phi^1_t = \Delta_y \phi^1 +3U^2(y)\phi^1+\mathcal H^1(\phi,\psi,\la,\xi)~~&\mbox{ in }\mathcal D_{2R}\times (0,T)\\
\phi^1(\cdot,0)=0~~&\mbox{ in }\mathcal D_{2R}\\
\end{cases}
\end{equation*}

\begin{equation*}
\begin{cases}
\la^2 \phi^{\perp}_t = \Delta_y \phi^{\perp} +3U^2(y)\phi^{\perp}+\mathcal H^{\perp}(\phi,\psi,\la,\xi)~~&\mbox{ in }\mathcal D_{2R}\times (0,T)\\
\phi^{\perp}(\cdot,0)=0~~&\mbox{ in }\mathcal D_{2R}\\
\end{cases}
\end{equation*}
with
\begin{equation}\label{def-mHm0}
\mathcal H^0(\phi,\psi,\la,\xi)=\int_{\mathbb{S}^3} \mathcal H(\phi,\psi,\la,\xi) \Theta_0(\theta) d\theta,
\end{equation}
\begin{equation}\label{def-mHm1}
\mathcal H^1(\phi,\psi,\la,\xi)=\sum\limits_{j=1}^4\left(\int_{\mathbb{S}^3} \mathcal H(\phi,\psi,\la,\xi) \Theta_j(\theta) d\theta\right) \Theta_j,
\end{equation}
and
\begin{equation}\label{def-mHmh}
\mathcal H^{\perp}(\phi,\psi,\la,\xi)=\sum\limits_{j\geq 5}\left(\int_{\mathbb{S}^3} \mathcal H(\phi,\psi,\la,\xi) \Theta_j(\theta) d\theta\right) \Theta_j,
\end{equation}
where $\Theta_j~(j=0,1,\cdots)$ are spherical harmonics. From the linear theory in Section \ref{sec-linearinner}, we know that for $\mathcal H=\mathcal H^0+\mathcal H^1+\mathcal H^{\perp}$ satisfying
$$\|\mathcal H^0\|_{\nu,2+a},~\|\mathcal H^1\|_{\nu_1,2+a_1},~\|\mathcal H^{\perp}\|_{\nu,2+a}<+\infty,$$
there exists a solution $(\phi^0,\phi^1,\phi^{\perp},c^0,c^{\ell})$ ($\ell=1,\cdots,4$) solving the projected inner problems
\begin{equation}\label{eqn-mode000}
\begin{cases}
\la^2 \phi^0_t = \Delta_y \phi^0 +3U^2(y)\phi^0+\mathcal H^0(\phi,\psi,\la,\xi)+c^0 Z_5~~&\mbox{ in }\mathcal D_{2R}\times (0,T)\\
\phi^0(\cdot,0)=0~~&\mbox{ in }\mathcal D_{2R}\\
\end{cases}
\end{equation}

\begin{equation}\label{eqn-mode111}
\begin{cases}
\la^2 \phi^1_t = \Delta_y \phi^1 +3U^2(y)\phi^1+\mathcal H^1(\phi,\psi,\la,\xi)+\sum\limits_{\ell=1}^4 c^{\ell}Z_{\ell}~~&\mbox{ in }\mathcal D_{2R}\times (0,T)\\
\phi^1(\cdot,0)=0~~&\mbox{ in }\mathcal D_{2R}\\
\end{cases}
\end{equation}

\begin{equation}\label{eqn-modehhh}
\begin{cases}
\la^2 \phi^{\perp}_t = \Delta_y \phi^{\perp} +3U^2(y)\phi^{\perp}+\mathcal H^{\perp}(\phi,\psi,\la,\xi)~~&\mbox{ in }\mathcal D_{2R}\times (0,T)\\
\phi^{\perp}(\cdot,0)=0~~&\mbox{ in }\mathcal D_{2R}\\
\end{cases}
\end{equation}
and the inner solution $\phi[\mathcal H]=\phi^0[\mathcal H^0]+\phi^1[\mathcal H^1]+\phi^{\perp}[\mathcal H^{\perp}]$ with proper space-time decay can be found ensuring the inner--outer gluing to be carried out. First, we choose all the parameters such that
$$\|\mathcal H^0\|_{\nu,2+a},~\|\mathcal H^1\|_{\nu_1,2+a_1},~\|\mathcal H^{\perp}\|_{\nu,2+a}<+\infty.$$
To this end, we first give some estimates for $\mathcal H$.
\begin{itemize}
\item By \eqref{est-Psi0}, we have
\begin{equation}\label{est-mH1}
\begin{aligned}
&\quad \left|3\la U^{2}(y)[\eta_*\Psi_0(\la y+\xi,t)+\psi(\la y+\xi,t)+Z^*(\la y+\xi,t)]\right|\\
&\lesssim \frac{\la_*(t)}{1+|y|^4}\left[|\dot\la_*|\left(\log{\la_*}+\log(1+|y|)\right)+\la_*^{\nu-2+\frac4n}(0)R^{-2-\alpha+\frac8n}(0)|\log T|\|\psi\|_{*}+\|Z^*\|_{\infty}\right].
\end{aligned}
\end{equation}

\medskip

\item By \eqref{est-drPsi0}, we obtain
\begin{equation}\label{est-mH2}
\begin{aligned}
&\quad\left|\la\left[\dot\la (\nabla_y\phi\cdot y+\phi)+ \nabla_y \phi\cdot\dot\xi\right]+\frac{(n-4)\la^3}{r}\eta_*\partial_r\Psi_0\right|\\
&\lesssim \la_*|\dot\la_*|\left(\frac{\la_*^{\nu}R^{\sigma(4-a)}\log R}{1+|y|^{4}}\|\phi^0\|_{0,\sigma,\nu,a}+\frac{\la_*^{\nu_1}}{1+|y|^{a_1}}\|\phi^1\|_{\nu_1,a_1}+\frac{\la_*^{\nu}}{1+|y|^{a}}\|\phi^{\perp}\|_{\nu,a}\right)\\
&\quad + \la_*\dot\xi \frac{\la_*^{\nu_1}}{1+|y|^{1+a_1}}\|\phi^1\|_{\nu_1,a_1}+\frac{\la_*}{\sqrt{T-t}}(r-\sqrt{2(n-4)(T-t)})\frac{|\dot\la_*|}{1+|y|^2}.
\end{aligned}
\end{equation}

\medskip

\item Using \eqref{est-Psi0}, \eqref{est-Psi1} and \eqref{est-mR*}, we evaluate
\begin{equation}\label{est-mH3}
\begin{aligned}
&\quad \left|\la^3\mathcal N(\mathtt w)+\la^3 \mathcal K[\la,\xi]+3\la U^{2}(y)\Psi_1\right|\\
&\lesssim \left|\frac{\la^2}{1+|y|^2}\left[(\la^{-1}\eta_R\phi)^2+(\eta_*\Psi_0)^2+\psi^2+(Z^*)^2+\Psi_1^2\right]+\la^3 \mathcal K[\la,\xi]\right|\\
&\lesssim \frac{\la_*^{2\nu}R^{2\sigma(4-a)}(\log R)^2}{1+|y|^{10}}\|\phi^0\|^2_{0,\sigma,\nu,a}+\frac{\la_*^{2\nu_1}}{1+|y|^{2+2a_1}}\|\phi^1\|^2_{\nu_1,a_1}+\frac{\la_*^{2\nu}}{1+|y|^{2+2a}}\|\phi^{\perp}\|^2_{\nu,a}\\
&\quad+\frac{\la_*^2(t)\la_*^{2\nu-4+\frac8n}(0)R^{-4-2\alpha+\frac{16}n}(0)|\log T|^2}{1+|y|^2}\|\psi\|_*^2+\frac{\la_*^2}{1+|y|^2}|\dot\la_*|^2|\log(T-t)|^2\\
&\quad+\frac{\la_*^2}{1+|y|^2}\|Z^*\|^2_{\infty}+\frac{\la_*^{4\beta}|\log(T-t)|^2}{1+|y|^2}+\frac{\la_*\dot\la_*}{(1+|y|^2)^2}+\frac{\la_*|\dot\xi|}{1+|y|^3}+\frac{\la_*}{\sqrt{T-t}}\frac{y_1}{1+|y|^4}\\
&\quad+|\dot\la|\left(\frac{\la_*^5}{(T-t)^2}+\frac{\la_*}{(1+|y|^2)^2}+\frac{\la_*^4(y\cdot \dot\xi+\dot\la)}{T-t}+\frac{\la_*^2\left(y\cdot \dot\xi+\dot\la\right)}{1+|y|^2}\right).
\end{aligned}
\end{equation}

\medskip

\item In the spherical coordinates, the projection of
$$\la^{-2}(t)\nabla U(y)\cdot\dot\xi(t)+\frac{n-4}{\la(t)y_1+\xi_r(t)}\la^{-2}(t)\partial_{y_1}U(y)$$
on mode $0$ is given by
\begin{equation}\label{proj-mode0}
\begin{aligned}
&\quad\int_{\mathbb S^3}\frac{n-4}{\la(t)y_1+\xi_r(t)}\la^{-2}(t)\partial_{y_1}U(y) \Theta_0 d\theta\\
&=C\frac{\la^{-2}}{(1+|y|^2)^2}\int_0^{\pi} \frac{|y|\cos\theta\sin^2\theta}{\la|y|\cos\theta+\xi_r} d\theta\\
&=-C\frac{\la^{-2}}{(1+|y|^2)^2}\frac{\la|y|^2\pi}{2(\xi_r+\sqrt{\xi_r^2-(\la|y|)^2})^2},
\end{aligned}
\end{equation}
where $C$ is a constant. Note that since our choice of $\xi_r$ is $$\xi_r\sim\sqrt{2(n-4)(T-t)},$$
the above projection on mode $0$ behaves exactly like the first error $\mathcal E_0$ defined in \eqref{def-mE0}, and direct computations show that the sum of  these two terms does not vanish. So we can deal with \eqref{proj-mode0} by slightly modifying the first correction $\Psi_0$. Here we omit the details.

\medskip

\item Similarly, the projection of
$$\la^{-2}(t)\nabla U(y)\cdot\dot\xi(t)+\frac{n-4}{\la(t)y_1+\xi_r(t)}\la^{-2}(t)\partial_{y_1}U(y)$$
on mode $1$ can be computed as
\begin{equation*}
\begin{aligned}
&\quad\int_{\mathbb S^3}\la^{-2}(t)\partial_{y_1}U(y)\left(\frac{n-4}{\la(t)y_1+\xi_r(t)}+\dot\xi_r\right) \Theta_1(\theta) d\theta\\
&=-\int_{\mathbb S^3}\la^{-2}(t)\partial_{y_1}U(y)\frac{(n-4)\la(t) y_1}{\xi_r(t)[\la(t)y_1+\xi_r(t)]} \Theta_1(\theta) d\theta\\
&=C'\frac{\la^{-1}(t)|y|^2}{\xi_r(t)(1+|y|^2)^2}\int_0^{\pi} \frac{\cos^3\theta\sin^2\theta}{\la|y|\cos\theta+\xi_r} d\theta\\
&=C'\frac{\la^{-1}(t)|y|^2}{\xi_r(t)(1+|y|^2)^2}\frac{\left((\la|y|)^4+4(\la|y|)^2\xi_r^2-8\xi_r^4+8\xi_r^3\sqrt{\xi_r^2-(\la|y|)^2}\right)\pi}{8(\la|y|)^5},
\end{aligned}
\end{equation*}
where $C'$ is a constant and we have used that $\dot\xi_r\sim-\frac{n-4}{\xi_r}$. Note that in $\mathcal D_{2R}$, namely $|y|\leq 2R$, we have $\la|y|\ll\xi_r$ for $T$ sufficiently small. Therefore, by directly expanding the above expression, we obtain
\begin{equation}\label{proj-mode1}
\int_{\mathbb S^3}\la^{-2}(t)\partial_{y_1}U(y)\left(\frac{n-4}{\la(t)y_1+\xi_r(t)}+\dot\xi_r\right) \Theta_1(\theta) d\theta\lesssim \frac{1}{\xi_r^3(1+|y|)}.
\end{equation}
\end{itemize}

Then we estimate $\mathcal H$ in three different modes.

\medskip

\noindent {\bf Estimate of $\mathcal H^0$.}

\medskip

By \eqref{est-mH1}--\eqref{proj-mode0}, we obtain
\begin{equation*}
\begin{aligned}
\|\mathcal H^0\|_{\nu,2+a}\lesssim &~ \la_*^{1-\nu}|\dot\la_*||\log(T-t)|+\la_*^{1-\nu}(t)\la_*^{\nu-2+\frac4n}(0)R^{-2-\alpha+\frac8n}(0)|\log T|\|\psi\|_*\\
&~+\la_*^{1-\nu}\|Z^*\|_{\infty}+\la_*^{\frac12}R^{\sigma(4-a)}\log R\|\phi^0\|_{0,\sigma,\nu,a}+\la_*|\dot\la_*|R^{\sigma(4-a)}\log R\|\phi^0\|_{0,\sigma,\nu,a}\\
&~+\la_*^{\nu}R^{2\sigma(4-a)}(\log R)^2\|\phi^0\|^2_{0,\sigma,\nu,a}+\la_*^{2-\nu}(t)R^a(t)\la_*^{2\nu-4+\frac8n}(0)R^{-4-2\alpha+\frac{16}n}(0)|\log T|^2\|\psi\|^2_*\\
&~+\la_*^{2-\nu}|\dot\la_*|^2R^a |\log(T-t)|^2+\la_*^{2-\nu}R^a\|Z^*\|_{\infty}^2+\la_*^{4\beta-\nu}R^a|\log(T-t)|^2\\
&~+\la_*^{1-\nu}|\dot\la_*|+|\dot\la_*|\bigg(\frac{\la_*^{5-\nu}}{(T-t)^2} R^{2+a}+\la_*^{1-\nu}+\frac{\la_*^{4-\nu}R^{2+a}(R|\dot\xi|+\dot\la)}{T-t}\\
&~+\la_*^{2-\nu}R^a(R|\dot\xi|+\dot\la)\bigg),
\end{aligned}
\end{equation*}
from which we conclude that
\begin{equation}\label{est-mHm0}
\|\mathcal H^0\|_{\nu,2+a} \lesssim T^{\epsilon_0}\left(\|\phi^0\|_{0,\sigma,\nu,a}+\|\psi\|_*+\|Z^*\|_{\infty}+\|\la\|_{\infty}+\|\xi\|_G+1\right)
\end{equation}
provided
\begin{equation}\label{cond-mHm0}
\begin{cases}
\nu<1\\
\frac4n-1+\beta(2+\alpha-\frac8n)>0\\
\frac12-\beta\sigma(4-a)>0\\
1-\beta\sigma(4-a)>0\\
\nu-2\beta\sigma(4-a)>0\\
2-\nu-a\beta>0\\
\nu-2+\frac8n+\beta(4+2\alpha-\frac{16}n)>0\\
2-\nu-a\beta>0\\
(4-a)\beta-\nu>0\\
\frac32-\nu-\beta(1+a)>0\\
\end{cases}
\end{equation}

\medskip

\noindent {\bf Estimate of $\mathcal H^1$.}

\medskip

From \eqref{est-mH1}, \eqref{est-mH2}, \eqref{est-mH3} and \eqref{proj-mode1}, we have
\begin{equation*}
\begin{aligned}
\|\mathcal H^1\|_{\nu_1,2+a_1}\lesssim &~ \la_*^{1-\nu_1}|\log(T-t)||\dot\la_*|+\la_*^{1-\nu_1}(t)\la_*^{\nu-2+\frac4n}(0)R^{-2-\alpha+\frac8n}(0)|\log T|\|\psi\|_*\\
&~+\la_*^{1-\nu_1}\|Z^*\|_{\infty}+\la_* R^2\|\phi^1\|_{\nu_1,a_1}+\la_*|\dot\la_*| R^2\|\phi^1\|_{\nu_1,a_1}+\la_*^{\frac12}R\|\phi^1\|_{\nu_1,a_1}\\
&~+\la^{\frac32-\nu_1}|\dot\la_*|R^{1+a_1}+\la_*^{\nu_1}\|\phi^1\|_{\nu_1,a_1}^2+\la_*^{2-\nu_1}|\dot\la_*|^2 R^{a_1}|\log(T-t)|^2\\
&~+\la_*^{2-\nu_1}(t)R^{a_1}(t)\la_*^{2\nu-4+\frac8n}(0)R^{-4-2\alpha+\frac{16}n}(0)|\log T|^2\|\psi\|_*^2\\
&~+\la_*^{2-\nu_1} R^{a_1}\|Z^*\|_{\infty}^2+\la_*^{4\beta-\nu_1}R^{a_1}|\log(T-t)|^2+\la_*^{\frac32-\nu_1} R^{1+a_1}\\
&~+|\dot\la_*|\bigg(\frac{\la_*^{5-\nu_1}}{(T-t)^2} R^{2+a_1}+\la_*^{1-\nu_1}+\frac{\la_*^{4-\nu_1}R^{2+a_1}(R|\dot\xi|+\dot\la)}{T-t}\\
&~+\la_*^{2-\nu_1}R^{a_1}(R|\dot\xi|+\dot\la)\bigg).\\
\end{aligned}
\end{equation*}
Therefore, we obtain that for $\epsilon_0>0$
\begin{equation}\label{est-mHm1}
\|\mathcal H^1\|_{\nu_1,2+a_1}\lesssim T^{\epsilon_0}\left(\|\phi^1\|_{\nu_1,a_1}+\|\psi\|_*+\|Z^*\|_{\infty}+\|\la\|_{\infty}+\|\xi\|_G+1\right)
\end{equation}
provided
\begin{equation}\label{cond-mHm1}
\begin{cases}
\nu_1<1\\
\nu-\nu_1-1+\frac4n+\beta(2+\alpha-\frac8n)>0\\
1-2\beta>0\\
\frac32-\nu_1-\beta(1+a_1)>0\\
\nu_1>0\\
2-\nu_1-a_1\beta>0\\
2\nu-\nu_1-2+\frac8n+\beta(4+2\alpha-a_1-\frac{16}n)>0\\
2-\nu_1-a_1\beta>0\\
(4-a_1)\beta-\nu_1>0\\
\frac32-\nu_1-\beta(1+a_1)>0\\
\end{cases}
\end{equation}

\medskip

\noindent {\bf Estimate of $\mathcal H^{\perp}$.}

\medskip

Using \eqref{est-mH1}--\eqref{est-mH3}, we get
\begin{equation*}
\begin{aligned}
\|\mathcal H^{\perp}\|_{\nu,2+a}\lesssim &~  \la_*^{1-\nu}|\dot\la_*||\log(T-t)|+\la_*^{1-\nu}(t)\la_*^{\nu-2+\frac4n}(0)R^{-2-\alpha+\frac8n}(0)|\log T|\|\psi\|_*\\
&~+\la_*^{1-\nu}\|Z^*\|_{\infty}+\la_* R^2|\dot\la_*|\|\phi^{\perp}\|_{\nu,a}+\la_*^{\nu}\|\phi^{\perp}\|^2_{\nu,a}+\la_*^{2-\nu}|\dot\la_*|^2R^a |\log(T-t)|^2\\
&~+\la_*^{2-\nu}(t)R^a(t)\la_*^{2\nu-4+\frac8n}(0)R^{-4-2\alpha+\frac{16}n}(0)|\log T|^2\|\psi\|^2_*\\
&~+\la_*^{2-\nu}R^a\|Z^*\|_{\infty}^2+\la_*^{4\beta-\nu}R^a|\log(T-t)|^2+|\dot\la_*|\bigg(\frac{\la_*^{5-\nu}}{(T-t)^2} R^{2+a}\\
&~+\la_*^{1-\nu}+\frac{\la_*^{4-\nu}R^{2+a}(R|\dot\xi|+\dot\la)}{T-t}+\la_*^{2-\nu}R^a(R|\dot\xi|+\dot\la)\bigg).
\end{aligned}
\end{equation*}
Thus, one has
\begin{equation}\label{est-mHmp}
\|\mathcal H^{\perp}\|_{\nu,2+a}\lesssim T^{\epsilon_0}\left(\|\phi^{\perp}\|_{\nu,a}+\|\psi\|_*+\|Z^*\|_{\infty}+\|\la\|_{\infty}+\|\xi\|_G+1\right)
\end{equation}
provided
\begin{equation}\label{cond-mHmp}
\begin{cases}
0<\nu<1\\
\frac4n-1+\beta(2+\alpha-\frac8n)>0\\
1-2\beta>0\\
2-\nu-a\beta>0\\
\nu-2+\frac8n+\beta(4+2\alpha-\frac{16}n)>0\\
2-\nu-a\beta>0\\
(4-a)\beta-\nu>0\\
\frac32-\nu-\beta(1+a)>0\\
\end{cases}
\end{equation}

Collecting \eqref{est-mHm0}--\eqref{cond-mHmp}, we conclude that for some $\epsilon_0>0$
\begin{equation}\label{inner-contraction}
\begin{aligned}
\|\mathcal H^0\|_{\nu,2+a}+\|\mathcal H^1\|_{\nu_1,2+a_1}+\|\mathcal H^{\perp}\|_{\nu,2+a}\lesssim&~ T^{\epsilon_0}\bigg(\|\phi^0\|_{0,\sigma,\nu,a}+\|\phi^1\|_{\nu_1,a_1}+\|\phi^{\perp}\|_{\nu,a}\\
&~+\|\psi\|_*+\|\la\|_{\infty}+\|\xi\|_{G}+\|Z^*\|_{\infty}+1\bigg)
\end{aligned}
\end{equation}
provided the parameters $\beta,a,a_1,\alpha,\nu,\nu_1,\sigma$ satisfy
\begin{equation}\label{cond-inner-contraction}
\begin{cases}
0<\nu<1\\
\frac4n-1+\beta(2+\alpha-\frac8n)>0\\
\frac12-\beta\sigma(4-a)>0\\
\nu-2\beta\sigma(4-a)>0\\
2-\nu-a\beta>0\\
\nu-2+\frac8n+\beta(4+2\alpha-\frac{16}n)>0\\
(4-a)\beta-\nu>0\\
\frac32-\nu-\beta(1+a)>0\\
0<\nu_1<1\\
\nu-\nu_1-1+\frac4n+\beta(2+\alpha-\frac8n)>0\\
1-2\beta>0\\
\frac32-\nu_1-\beta(1+a_1)>0\\
2-\nu_1-a_1\beta>0\\
2\nu-\nu_1-2+\frac8n+\beta(4+2\alpha-a_1-\frac{16}n)>0\\
2-\nu_1-a_1\beta>0\\
(4-a_1)\beta-\nu_1>0\\
\end{cases}
\end{equation}

\medskip

\subsection{The parameter problems}

\medskip

From \eqref{eqn-mode000}--\eqref{eqn-modehhh}, we need to adjust the parameter functions $\la(t)$, $\xi(t)$ such that
$$c^0[\la,\xi,\Psi^*]=0,~~c^{\ell}[\la,\xi,\Psi^*]=0,~~\ell=1,\cdots,4,$$
where
\begin{equation}\label{def-cmode0}
c^0[\la,\xi,\Psi^*]=-\frac{\int_{\mathcal D_{2R_*}}\mathcal H^0 Z_5 dy}{\int_{\mathcal D_{2R_*}}|Z_5|^2 dy}-\mathcal O[\mathcal H^0],
\end{equation}
\begin{equation}\label{def-cmode1}
c^{\ell}[\la,\xi,\Psi^*]=-\frac{\int_{\mathcal D_{2R}}\mathcal H^1 Z_{\ell} dy}{\int_{\mathcal D_{2R}}|Z_{\ell}|^2 dy}~\mbox{ for }~\ell=1,\cdots,4.
\end{equation}
It turns out that we can easily achieve at the translation mode \eqref{def-cmode1}, but the scaling mode \eqref{def-cmode0} is more complicated.

\medskip

\subsubsection{The reduced problem of \texorpdfstring{$\xi(t)$}{xi(t)}}

\medskip

We first consider the reduced equation for $\xi(t)=(\xi_r(t),\xi_z(t)).$ Notice that \eqref{def-cmode1} is equivalent to
\begin{equation*}
\int_{\mathcal D_{2R}} \mathcal H^1(\phi,\psi,\la,\xi)(y,t) Z_i(y) dy=0,~\mbox{ for all }t\in (0,T),~i=1,\cdots,4.
\end{equation*}
Recall that
$$\xi_r(t)=\sqrt{2(n-4)(T-t)}+\xi_{r,1}(t),~~\xi_z(t)=z_0+\xi_{z,1}(t)$$
and write $\Psi^*=\psi+Z^*.$ Then for $i=1,\cdots,4,$
$$\int_{\mathcal D_{2R}} \mathcal H^1(\phi,\psi,\la,\xi)(y,t) Z_i(y) dy=0$$
yield that
\begin{equation}\label{eqn-xi}
\left\{
\begin{aligned}
&\dot\xi_r+\frac{n-4}{\xi_r}=b_r[\la,\xi,\phi,\Psi^*]\\
&\dot\xi_{z_j}=b_{z_j}[\la,\xi,\phi,\Psi^*]\\
\end{aligned}
\right.
\end{equation}
where
\begin{equation*}
b_r[\la,\xi,\phi,\Psi^*]=\int_{\mathcal D_{2R}} \mathcal H_r[\la,\xi,\phi,\Psi^*](y,t)Z_1(y) dy
\end{equation*}
\begin{equation*}
b_{z_j}[\la,\xi,\phi,\Psi^*]=\int_{\mathcal D_{2R}} \mathcal H_{z_j}[\la,\xi,\phi,\Psi^*](y,t)Z_j(y) dy~\mbox{ for }~ j=2,3,4
\end{equation*}
with
\begin{equation*}
\mathcal H_r[\la,\xi,\phi,\Psi^*](y,t)=\left[\int_{\mathbb S^3} \left(\mathcal H-\la U_{y_1}\left(\dot\xi_r+\frac{n-4}{\la y_1+\xi_r}\right)\right)\Theta_1(\theta)d\theta\right]\Theta_1
\end{equation*}
and
\begin{equation*}
\mathcal H_{z_j}[\la,\xi,\phi,\Psi^*]=\sum\limits_{j=2}^4\left[\int_{\mathbb S^3} \left(\mathcal H-\la U_{y_j}\dot\xi_{z_j}\right)\Theta_j(\theta)d\theta\right]\Theta_j.
\end{equation*}
Here $\Theta_j$ $(j=1,\cdots,4)$ are the eigenfunctions corresponding to the second eigenvalue of $-\Delta_{\mathbb S^3}.$ Now we want to evaluate the sizes of $b_r[\la,\xi,\phi,\Psi^*]$ and $b_{z_j}[\la,\xi,\phi,\Psi^*]$. By direct computations, we get
\begin{equation}\label{est-br}
\begin{aligned}
|b_r[\la,\xi,\phi,\Psi^*]|\lesssim&~\left(\la_*|\dot\la_*||\log(T-t)|+\la_*\|Z^*\|_{\infty}\right)(1+O(R^{-3}))\\
&~+\la_*(t)\la_*^{\nu-2+\frac4n}(0)R^{-2-\alpha+\frac8n}(0)|\log T|\|\psi\|_*(1+O(R^{-3}))\\
&~+\la_*^{\frac12+\nu_1}\|\xi\|_G\|\phi^1\|_{\nu_1,a_1}(1+O(R^{-a_1}))+\la_*^{1+\nu_1}|\dot\la_*|\|\phi^1\|_{\nu_1,a_1}(1+O(R^{1-a_1}))\\
&~+\la_*^{\frac32}|\dot\la_*|R(1+O(R^{-1}))+\la_*^{2\nu_1}\|\phi^1\|^2_{\nu_1,a_1}(1+O(R^{-2a_1-1}))\\
&~+\la_*^{2}(t)\la_*^{2\nu-4+\frac8n}(0)R^{-4-2\alpha+\frac{16}{n}}(0)|\log T|^2\|\psi\|_*^2(1+O(R^{-1}))\\
&~+\la_*^2\|Z^*\|^2_{\infty}(1+O(R^{-1}))+\la_*^2|\dot\la_*|^2|\log(T-t)|^2(1+O(R^{-1}))\\
&~+\la_*^{4\beta}|\log(T-t)|^2 (1+O(R^{-1}))+\la_*|\dot\la_*|(1+O(R^{-3}))\\
&~+\la_*^2|\dot\la_*|(1+O(R^{-1}))+\la_*^3|\dot\la_*|R+\la_*^{\frac52}|\dot\la_*|R^2\|\xi\|_G\\
&~+\la_*^{\frac32}|\dot\la_*|R\|\xi\|_G(1+O(R^{-1}))
\end{aligned}
\end{equation}
and
\begin{equation}\label{est-bz}
\begin{aligned}
|b_{z_j}[\la,\xi,\phi,\Psi^*]|\lesssim&~\left(\la_*|\dot\la_*||\log(T-t)|+\la_*\|Z^*\|_{\infty}\right)(1+O(R^{-3}))\\
&~+\la_*(t)\la_*^{\nu-2+\frac4n}(0)R^{-2-\alpha+\frac8n}(0)|\log T|\|\psi\|_*(1+O(R^{-3}))\\
&~+\la_*^{1+\nu_1}|\dot\la_*|\|\phi^1\|_{\nu_1,a_1}(1+O(R^{1-a_1}))+\la_*^{1+\nu_1}|\dot\xi_{z_j}|\|\phi^1\|_{\nu_1,a_1}(1+O(R^{-a_1}))\\
&~+\la_*^{\frac32}|\dot\la_*|R(1+O(R^{-1}))+\la_*^{2\nu_1}\|\phi^1\|^2_{\nu_1,a_1}(1+O(R^{-2a_1-1}))\\
&~+\la_*^{2}(t)\la_*^{2\nu-4+\frac8n}(0)R^{-4-2\alpha+\frac{16}{n}}(0)|\log T|^2\|\psi\|_*^2(1+O(R^{-1}))\\
&~+\la_*^2\|Z^*\|^2_{\infty}(1+O(R^{-1}))+\la_*^2|\dot\la_*|^2|\log(T-t)|^2(1+O(R^{-1}))\\
&~+\la_*^{4\beta}|\log(T-t)|^2 (1+O(R^{-1}))+\la_*|\dot\la_*|(1+O(R^{-3}))\\
&~+\la_*^2|\dot\la_*|(1+O(R^{-1}))+\la_*^3|\dot\la_*|R+\la_*^{3+\upsilon}|\dot\la_*|R^2\|\xi\|_G\\
&~+\la_*^{2+\upsilon}|\dot\la_*|R\|\xi\|_G(1+O(R^{-1})).
\end{aligned}
\end{equation}
Since $\xi_r(t)=\sqrt{2(n-4)(T-t)}+\xi_{r,1}(t),$
problem \eqref{eqn-xi} becomes
\begin{equation}\label{eqn-xi''}
\left\{
\begin{aligned}
&\dot\xi_{r,1}-\frac{(n-4)\xi_{r,1}}{\sqrt{2(n-4)(T-t)}\left(\sqrt{2(n-4)(T-t)}+\xi_{r,1}\right)}=b_r[\la,\xi,\phi,\Psi^*],\\
&\dot\xi_{z_j}=b_{z_j}[\la,\xi,\phi,\Psi^*].\\
\end{aligned}
\right.
\end{equation}
Then we analyze the reduced problem \eqref{eqn-xi''}, which defines operators $\Xi_r$ and $\Xi_{z_j}$ $(j=2,3,4)$ that return the solutions $\xi_{r,1}$ and $\xi_{z_j}$ respectively. Here we write
\begin{equation}\label{def-XiXi}
\Xi=(\Xi_r,\Xi_{z_2},\Xi_{z_3},\Xi_{z_4}).
\end{equation}
We shall solve $(\xi_{r,1},\xi_{z,1})$ under the norm
\begin{equation*}
\|\xi\|_G=\sup_{t\in(0,T)} \left[(T-t)^{-\frac12-\upsilon}|\xi_{r,1}(t)|+M_1(T-t)^{\frac12-\upsilon}|\dot\xi_{r,1}(t)|+|\xi_{z_j}(t)|+(T-t)^{-\upsilon}|\dot\xi_{z_j}(t)|\right]
\end{equation*}
for $\upsilon>0$ and $0<M_1<1$. From \eqref{eqn-xi''}, we have
\begin{equation*}
|\xi_{r,1}(t)|\leq \left(\frac{(T-t)^{\frac12+\upsilon}\|\xi\|_G}{2(T-t)}+\left\|b_r[\la,\xi,\phi,\Psi^*]\right\|_{L^{\infty}(0,T)}\right)(T-t)
\end{equation*}
and
\begin{equation*}
|\xi_{z_j}(t)|\leq |z_0|+\left\|b_{z_j}[\la,\xi,\phi,\Psi^*]\right\|_{L^{\infty}(0,T)}(T-t).
\end{equation*}
Therefore, we obtain
\begin{equation}\label{est-Xir''}
\|\Xi_r\|_G\leq \frac{1+M_1}{2}\|\xi\|_G+(1+M_1)(T-t)^{\frac12-\upsilon}\left\|b_r[\la,\xi,\phi,\Psi^*]\right\|_{L^{\infty}(0,T)}
\end{equation}
and
\begin{equation}\label{est-Xiz''}
\|\Xi_{z_j}\|_G\leq |z_0|+(T-t)^{-\upsilon}\left\|b_{z_j}[\la,\xi,\phi,\Psi^*]\right\|_{L^{\infty}(0,T)}.
\end{equation}
By \eqref{est-br}, \eqref{est-bz}, \eqref{est-Xir''} and \eqref{est-Xiz''}, we conclude that for some constant $C>0$
\begin{equation}\label{est-Xir}
\begin{aligned}
\|\Xi_r\|_G\leq&~\left[\frac{1+M_1}{2}+C(1+M_1)(T-t)^{\frac12-\upsilon}\left(\la_*^{\frac12+\nu_1}+\la_*^{\frac32}|\dot\la_*|R\right)\right]\|\xi\|_G\\
&~+C(1+M_1)(T-t)^{\frac12-\upsilon}\bigg[\la_*(t)\la_*^{\nu-2+\frac4n}(0)R^{-2-\alpha+\frac8n}\|\psi\|_*+\la_*\|Z^*\|_{\infty}\\
&~+(\la_*^{\frac12+\nu_1}+\la_*^{2\nu_1})\|\phi^1\|_{\nu_1,a_1}+\la_*\|\la\|_{\infty}+\la_*\bigg]\\
\end{aligned}
\end{equation}
and
\begin{equation}\label{est-Xiz}
\begin{aligned}
\|\Xi_{z_j}\|_G\leq&~ |z_0|+C(T-t)^{-\upsilon}\Bigg[\la_*(t)\la_*^{\nu-2+\frac4n}(0)R^{-2-\alpha+\frac8n}\|\psi\|_*+\la_*\|Z^*\|_{\infty}\\
&~+(\la_*^{1+\nu_1}+\la_*^{2\nu_1})\|\phi^1\|_{\nu_1,a_1}+\la_*\|\la\|_{\infty}+\la_*+\left(\la_*^{1+\nu_1+\upsilon}+\la_*^{2+\upsilon}|\dot\la_*|R\right)\|\xi\|_G\Bigg].\\
\end{aligned}
\end{equation}

\medskip

\subsubsection{The reduced problem of \texorpdfstring{$\lambda(t)$}{lambda(t)}}\label{subsec-la}

\medskip

Since the reduced problem of $\la$ is essentially the same as that of \cite{17HMF}, we shall follow the strategy and logic in \cite[Section 8]{17HMF}.

From direct computations, we see that \eqref{def-cmode0}
gives a non-local integro-differential equation
\begin{equation}\label{indi-la}
\begin{aligned}
\int_{-T}^t \frac{\dot{\la}(s)}{t-s}\Gamma\left(\frac{\la^2(t)}{t-s}\right) ds+{\bf c_0} \dot\la=a[\la,\xi,\Psi^*](t)+\mathtt a_r[\la,\xi,\phi,\Psi^*](t),
\end{aligned}
\end{equation}
where
$${\bf c_0}=2\alpha_0\int_{\R^4}\frac{Z_5(y)}{(1+|y|^2)^2}dy,$$
\begin{equation}\label{def-aaa}
a[\la,\xi,\Psi^*]=-\int_{\mathcal D_{2R_*}} 3U^2(y)\left(\Psi_0+\Psi^*\right)Z_5(y) dy,
\end{equation}
and the remainder term $\mathtt a_r[\la,\xi,\phi,\Psi^*](t)$ turns out to be smaller order and has the following bound
\begin{equation*}
\begin{aligned}
\left|\mathtt a_r[\la,\xi,\phi,\Psi^*](t)\right|\lesssim&~\left[\la_*^{\nu}R^{\sigma(4-a)}(1+|\dot\la_*|)+\la_*^{\nu-\frac12}R^{\sigma(4-a)}\|\xi\|_G\right]\log R\|\phi^0\|_{0,\sigma,\nu,a}(1+O(R_*^{-2}))\\
&~+\la_*^{\frac12} R |\dot\la_*||\log(T-t)|+\la_*^{2\nu-1}R^{2\sigma(4-a)}(\log R)^2\|\phi^0\|_{0,\sigma,\nu,a}^2(1+O(R_*^{-8}))\\
&~+\la_*(t)\la_*^{2\nu-4+\frac8n}(0)R^{-4-2\alpha+\frac{16}{n}}(0)|\log T|^2|\log(T-t)|\|\psi\|_*^2\\
&~+\la_*|\dot\la_*|^2|\log(T-t)|^3+\la_*|\log(T-t)|\|Z^*\|_{\infty}^2+\la_*^{4\beta-1}|\log(T-t)|^3.
\end{aligned}
\end{equation*}
We first introduce the following norms
\begin{equation*}
\|f\|_{\Theta,l}:=\sup_{t\in[0,T]}\frac{|\log(T-t)|^l}{(T-t)^{\Theta}}|f(t)|,
\end{equation*}
where $f\in C([-T,T];\R)$ with $f(T)=0$, and $\Theta\in(0,1)$, $l\in\R$.
\begin{equation*}
[g]_{\gamma,m,l}:=\sup_{I_T} \frac{|\log(T-t)|^l}{(T-t)^m (t-s)^{\gamma}}|g(t)-g(s)|,
\end{equation*}
where $I_T=\left\{0\leq s\leq t\leq T: t-s\leq\frac{1}{10}(T-t)\right\},$ $g\in C([-T,T];\R)$ with $g(T)=0$ and $0<\gamma<1$, $m>0$, $l\in\R$. Also, we define
\begin{equation}\label{def-mB0}
\mathcal B_0[\la](t):=\int_{-T}^t \frac{\dot{\la}(s)}{t-s}\Gamma\left(\frac{\la^2(t)}{t-s}\right) ds+{\bf c_0} \dot\la
\end{equation}
and write
\begin{equation}\label{def-c^0}
c^0[\mathcal H]=\frac{\mathcal B_0[\la]-(a[\la,\xi,\Psi^*]+\mathtt a_r[\la,\xi,\phi,\Psi^*])}{\int_{\mathcal D_{2R_*}}|Z_5(y)|^2dy}.
\end{equation}

A key proposition concerning the solvability of $\la$ is stated as follows.

\begin{prop}[\cite{17HMF}]\label{keyprop-la}
Let $\omega,\Theta\in(0,\frac12)$, $\gamma\in(0,1)$, $m\leq \Theta-\gamma$ and $l\in\R$. If $a(t)$ satisfies $a(T)<0$ with $1/C\leq a(T)\leq C$ for some constant $C>1$, and
\begin{equation}\label{assump-a}
T^{\Theta}|\log T|^{1+c-l}\|a(\cdot)-a(T)\|_{\Theta,l-1}+[a]_{\gamma,m,l-1}\leq C_1
\end{equation}
for some $c>0$, then there exist two operators $\mathcal P$ and $\mathcal R_0$ such that $\la=\mathcal P[a]:[-T,T]\rightarrow \R$ satisfies
\begin{equation}\label{eqn-mB0}
\mathcal B_0[\la](t)=a(t)+\mathcal R_0[a](t)
\end{equation}
with
\begin{equation*}
\begin{aligned}
|\mathcal R_0[a](t)|\lesssim \left(T^{\frac12+c}+T^{\Theta}\frac{\log|\log T|}{|\log T|}\|a(\cdot)-a(T)\|_{\Theta,l-1}+[a]_{\gamma,m,l-1}\right)\frac{(T-t)^{m+(1+\omega)\gamma}}{|\log(T-t)|^l}.
\end{aligned}
\end{equation*}
\end{prop}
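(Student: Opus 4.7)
The strategy is to build $\mathcal P$ by first inverting a principal part of $\mathcal B_0$ and then absorbing everything else into $\mathcal R_0$. The heart of the matter is to extract a clean local equation from the integro-differential operator $\mathcal B_0$ by exploiting the two-regime behavior of $\Gamma$ derived in Section~\ref{sec-laxi}: $\Gamma(\tau)=c_*+O(\tau)$ for $\tau<1$ and $\Gamma(\tau)=O(\tau^{-1})$ for $\tau>1$.

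First I would split the integration variable in $\mathcal B_0[\lambda](t)$ at $s=t-\lambda^2(t)$. On $[-T,t-\lambda^2]$ one has $\tau=\lambda^2(t)/(t-s)<1$, so $\Gamma\approx c_*$, and the dominant piece is
\[
c_*\int_{-T}^{t-\lambda^2(t)}\frac{\dot\lambda(s)}{t-s}\,ds
=c_*\Bigl[\int_{-T}^{t}\frac{\dot\lambda(s)}{T-s}\,ds-\dot\lambda(t)\log(T-t)\Bigr]+(\text{smoother error}),
\]
where the bracketed expression is exactly the quantity $\beta(t)$ from the formal computation in Section~\ref{sec-laxi}. On $[t-\lambda^2,t]$ we have $\tau\geq 1$ so $\Gamma=O(\tau^{-1})$, and this piece contributes only $O(|\dot\lambda(t)|)$, which combines with the $\mathbf c_0\dot\lambda$ term into a single harmless remainder.

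Next I would invert the principal part $c_*\beta(t)=a(t)$ explicitly. A direct differentiation gives the identity
\[
\frac{d}{dt}\Bigl[\log^2(T-t)\,\dot\lambda(t)\Bigr]=-\log(T-t)\,\dot\beta(t),
\]
which converts $c_*\beta=a$ into the local ODE
\[
\frac{d}{dt}\Bigl[\log^2(T-t)\,\dot\lambda(t)\Bigr]=-\frac{1}{c_*}\log(T-t)\,\dot a(t).
\]
Integrating, imposing $\lambda(T)=0$, and fixing the integration constant by the matching condition $\beta(T)=a(T)/c_*$ defines $\mathcal P$ explicitly. For the constant profile $a\equiv a(T)$ this recovers the scaling parameter $\lambda_*$ of Section~\ref{sec-laxi} with $\dot\lambda\sim a(T)/(c_*|\log T|\,|\log(T-t)|^2)$; for general $a$ satisfying \eqref{assump-a}, an integration by parts re-expresses the right-hand side in terms of $a-a(T)$ rather than $\dot a$, so the variable part of $a$ enters the final bounds only through the norms $\|a-a(T)\|_{\Theta,l-1}$ and $[a]_{\gamma,m,l-1}$.

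Finally I would define $\mathcal R_0[a]:=\mathcal B_0[\mathcal P[a]]-a$ and estimate it in the norms of the statement. The remainder is a sum of four explicit pieces: (i) the $O(\tau)$ correction to $\Gamma$ on the inner regime; (ii) the $|s-t|<\lambda^2(t)$ contribution where $\Gamma=O(\tau^{-1})$; (iii) the boundary error in replacing $\int_{-T}^{t-\lambda^2}$ by $\int_{-T}^{t}$; and (iv) the additive $\mathbf c_0\dot\lambda$ term. Decomposing $a=a(T)+(a-a(T))$, the constant piece gives rise to the $T^{1/2+c}$ factor (using the improvement $\lambda^2(t)\ll(T-t)^{1+2c}$ over the natural scale), while the variable piece is controlled using \eqref{assump-a}: the $\|a-a(T)\|_{\Theta,l-1}$ norm is absorbed by a commutator-type estimate on the $\int(T-s)^{-1}$ integral, producing the $\log|\log T|/|\log T|$ gain, and the Hölder seminorm $[a]_{\gamma,m,l-1}$ yields the extra $(T-t)^{(1+\omega)\gamma}$ smoothing after time-differencing the Volterra kernel.

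\textbf{Main obstacle.} The chief difficulty is the nonlocal, memory-like character of $\mathcal B_0$: $\lambda(t)$ depends on $\dot\lambda$ at all earlier times through a logarithmically singular Volterra kernel, so naive contraction or Banach fixed-point schemes do not close. The trick that resolves this, borrowed from the half-harmonic map flow analysis of \cite{17HMF}, is the exact identity for $\frac{d}{dt}[\log^2(T-t)\dot\lambda]$, which converts the nonlocal problem into a first-order ODE modulo explicit remainders. The secondary but still delicate task is the Hölder bookkeeping needed to extract the precise exponent $m+(1+\omega)\gamma$ and the $\log|\log T|/|\log T|$ gain; these reflect that one application of the approximate inverse smooths by exactly one power of $(T-t)^\gamma$ beyond the Hölder exponent of $a$, and making this sharp requires the careful time-differencing of kernels carried out in the cited reference.
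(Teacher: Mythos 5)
Your key observation is correct and is the same one the paper (following \cite{17HMF}) relies on: the identity
\begin{equation*}
\frac{d}{dt}\bigl[\log^2(T-t)\,\dot\lambda(t)\bigr]=-\log(T-t)\,\dot\beta(t),
\qquad
\beta(t):=\int_{-T}^{t}\frac{\dot\lambda(s)}{T-s}\,ds-\dot\lambda(t)\log(T-t),
\end{equation*}
does convert the model equation $c_*\beta=a$ into a first-order ODE for $\dot\lambda$. That is indeed the engine of the inversion.

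The genuine gap is in your remainder bookkeeping, and it is not a cosmetic one. In your scheme the remainder inherits the H\"older-difference term
\begin{equation*}
\int_{t-(T-t)}^{t-\lambda^2}\frac{\dot\lambda(t)-\dot\lambda(s)}{t-s}\,ds,
\end{equation*}
whose natural estimate via $|\dot\lambda(t)-\dot\lambda(s)|\lesssim[\dot\lambda]_{\gamma}|t-s|^\gamma$ gives only $(T-t)^\gamma$. Tracing through the norms, this yields a remainder of size $(T-t)^{m+\gamma}$, and since the hypotheses force $m\leq\Theta-\gamma$, we get $m+\gamma\leq\Theta$: the remainder would then decay no faster than the input $a(\cdot)-a(T)$ itself, so the scheme does not produce an improvement and the subsequent fixed-point argument cannot close (indeed, in Section~\ref{subsec-la} the requirement $m+(1+\omega)\gamma>\Theta$ forces $\omega>0$ whenever $\beta<\tfrac12$, i.e.\ always). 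The extra factor $(1+\omega)$ in the exponent comes from a further splitting of the critical range at the intermediate scale $s=t-(T-t)^{1+\omega}$: the window $[t-(T-t),\,t-(T-t)^{1+\omega}]$ must be absorbed into the principal operator (this is why the paper's $\mathcal S_{\omega}[\dot\lambda]$ carries the prefactor $(1+\omega)\log(T-t)$ against $\dot\lambda$ and a Volterra integral truncated at $t-(T-t)^{1+\omega}$), leaving only the inner layer $[t-(T-t)^{1+\omega},\,t-\lambda_*^2]$ in the H\"older remainder $\mathcal R_{\omega}$, which is what produces $(T-t)^{(1+\omega)\gamma}$. Your $\beta$ is the $\omega=0$ version of this principal part and will not deliver the claimed exponent. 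The price of the correct principal part $\mathcal S_{\omega}$ is that it is still nonlocal (differentiating it does not collapse into a pure ODE as your $\beta$ does), and the control of the resulting operator is precisely the technical content of \cite{17HMF} that you cannot bypass with the ODE identity alone.

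A secondary point worth naming: your item (iii), ``the boundary error in replacing $\int_{-T}^{t-\lambda^2}$ by $\int_{-T}^{t}$,'' bundles together two quite different objects --- a genuinely harmless kernel replacement $\frac{1}{t-s}\to\frac{1}{T-s}$ on $[-T,t-(T-t)]$, and the sharp H\"older-difference term discussed above. Keeping them separate is what reveals where the $\omega$-refinement must act.
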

The proof of Proposition \ref{keyprop-la} is in \cite{17HMF}.  The idea of the proof is to observe that
$$\mathcal B_0[\la]\approx \int_{-T}^{t-\la_*^2(t)}\frac{\dot\la(s)}{t-s}ds,$$
and we decompose
$$\mathcal B_0[\la]=\mathcal B_0^*[\la]+\mathcal S_{\omega}[\dot\la]+\mathcal R_{\omega}[\dot\la],$$
where
\begin{equation*}
\mathcal B_0^*[\la]:=\mathcal B_0[\la]-\int_{-T}^{t-\la_*^2(t)}\frac{\dot\la(s)}{t-s} ds,
\end{equation*}
\begin{equation*}
\mathcal S_{\omega}[\dot\la]:=\dot\la\left[(1+\omega)\log(T-t)-2\log\la_*(t)\right]+\int_{-T}^{t-(T-t)^{1+\omega}}\frac{\dot\la(s)}{t-s}ds
\end{equation*}
and
\begin{equation}\label{def-mRw}
\mathcal R_{\omega}[\dot\la]:=-\int_{t-(T-t)^{1+\omega}}^{t-\la_*^2(t)}\frac{\dot\la(t)-\dot\la(s)}{t-s} ds.
\end{equation}
Here $\omega>0$ is a fixed number. We solve a modified equation where we drop $\mathcal R_{\omega}[\dot\la]$ in \eqref{eqn-mB0}, and thus the remainder $\mathcal R_0$ is essentially $\mathcal R_{\omega}[\dot\la]$ and $\mathtt a_r[\la,\xi,\phi,\Psi^*]$.

In another aspect, we modify problem \eqref{indi-la} replacing $a[\la,\xi,\Psi^*]$ by its main term. To this end, we define
$$a[\la,\xi,\Psi^*]=a^0[\la,\xi,\Psi^*]+a^1[\la,\xi,\Psi^*]+a^{\perp}[\la,\xi,\Psi^*]$$
with
$$a^0[\la,\xi,\Psi^*]=-\int_{\mathcal D_{2R_*}} \tilde L^0[\Psi] Z_5(y) dy,$$
$$a^1[\la,\xi,\Psi^*]=-\int_{\mathcal D_{2R_*}} \tilde L^1[\Psi] Z_5(y) dy,$$
and
$$a^{\perp}[\la,\xi,\Psi^*]=-\int_{\mathcal D_{2R_*}} \tilde L^{\perp}[\Psi] Z_5(y) dy,$$
where $\tilde L[\Psi]:=3U^2(\Psi_0+\Psi^*)$, $\tilde L^0[\Psi]$ is the projection of $\tilde L[\Psi]$ on mode $0$, $\tilde L^1[\Psi]$ is the projection of $\tilde L[\Psi]$ on modes $1$ to $4$, and $\tilde L^{\perp}[\Psi]$ is the projection of $\tilde L[\Psi]$ on higher modes $j\geq 5$.

We define
\begin{equation}\label{def-c_*^0}
\begin{aligned}
c_*^0[\la,\xi,\Psi^*](t):=&~\frac{\mathcal R_0\left[a^0[\la,\xi,\Psi^*]\right](t)+a^1[\la,\xi,\Psi^*](t)+a^{\perp}[\la,\xi,\Psi^*](t)}{\int_{\mathcal D_{2R_*}}|Z_5(y)|^2dy}\\
&~-\left(c^0[\mathcal H[\la,\xi,\Psi^*]]-\tilde c^0[\mathcal H^0[\la,\xi,\Psi^*]]\right),
\end{aligned}
\end{equation}
where $\mathcal R_0$ is the operator given in Proposition \ref{keyprop-la}, $c^0$ is defined in \eqref{def-c^0}, and $\tilde c^0$ is the operator given in Proposition \ref{propmode0}. The reason for choosing such $c_*^0$ is the following. By Proposition \ref{keyprop-la}, the equation we solve is
$$\mathcal B_0[\la](t)=a^0[\la,\xi,\Psi^*](t)+\mathcal R_0[a^0[\la,\xi,\Psi^*]]$$
which is equivalent to
$$c^0[\mathcal H]=\frac{\mathcal R_0\left[a^0[\la,\xi,\Psi^*]\right](t)+a^1[\la,\xi,\Psi^*](t)+a^{\perp}[\la,\xi,\Psi^*](t)}{\int_{\mathcal D_{2R_*}}|Z_5(y)|^2dy}.$$
We shall consider the following reduced equation
$$\tilde c^0[\mathcal H^0]=c_*^0[\la,\xi,\Psi^*],$$
from which we get \eqref{def-c_*^0}.

By \eqref{def-aaa}, Proposition \ref{keyprop-la} and Proposition \ref{outer-apriori}, it is natural to choose
\begin{equation*}
\Theta=\nu-2+\frac4n+\beta(2+\alpha-\frac8n)
\end{equation*}
and
\begin{equation*}
m=\nu-2-\gamma+\beta(2+\alpha).
\end{equation*}
In order for $\|a(\cdot)-a(T)\|_{\Theta,l-1}$ and $[a]_{\gamma,m,l-1}$ to be finite, we require
$$l<\max\{1+2\Theta,1+2m\}$$
and it then follows that
$$\|a(\cdot)-a(T)\|_{\Theta,l-1}\lesssim |\log T|^{l-\Theta-1},\quad [a]_{\gamma,m,l-1}\lesssim |\log T|^{l-m-1}.$$
Another assumption $m<\Theta-\gamma$ in Proposition \ref{keyprop-la} is valid since $\beta<1/2$.
Finally, in order to make the remainder $\mathcal R_0[a]$ small, we impose
\begin{equation*}
m+(1+\omega)\gamma>\Theta,
\end{equation*}
which implies that
\begin{equation*}
\beta>\frac{1}{2}-\frac{\omega\gamma n}{8}.
\end{equation*}

\medskip


\subsection{Inner--outer gluing system}

\medskip

By the discussions in Section \ref{subsec-la}, we transform the inner--outer problems \eqref{inner}, \eqref{outer} into the problems of finding solutions $(\psi,\phi^0,\phi^1,\phi^{\perp},\la,\xi)$ solving the following {\em inner--outer gluing system}
\begin{equation}\label{eqn-outer}
\begin{cases}
\psi_t=\Delta_{(r,z)} \psi+\frac{n-4}{r}\partial_r \psi +\mathcal G(\phi^0+\phi^1+\phi^{\perp},\psi+Z^*,\la,\xi),~&\mbox{ in }\mathcal D\times (0,T)\\
\psi=-\Psi^0,~&\mbox{ on }(\partial \mathcal D\backslash \{r=0\})\times (0,T)\\
\psi_r=0,~&\mbox{ on } (\mathcal D\cap\{r=0\})\times (0,T)\\
\psi(r,z,0)=-(1-\eta_*)\Psi^0,~&\mbox{ in }\mathcal D\\
\end{cases}
\end{equation}
\begin{equation}\label{eqn-projm0}
\begin{cases}
\la^2 \phi^0_t = \Delta_y \phi^0 +3U^2(y)\phi^0+\mathcal H^0(\phi,\psi,\la,\xi)+\tilde c^0[\mathcal H^0] Z_5~~&\mbox{ in }\mathcal D_{2R}\times (0,T)\\
\phi^0(\cdot,0)=0~~&\mbox{ in }\mathcal D_{2R}\\
\end{cases}
\end{equation}

\begin{equation}\label{eqn-projm1}
\begin{cases}
\la^2 \phi^1_t = \Delta_y \phi^1 +3U^2(y)\phi^1+\mathcal H^1(\phi,\psi,\la,\xi)+\sum\limits_{\ell=1}^4 c^{\ell}[\mathcal H^1]Z_{\ell}~~&\mbox{ in }\mathcal D_{2R}\times (0,T)\\
\phi^1(\cdot,0)=0~~&\mbox{ in }\mathcal D_{2R}\\
\end{cases}
\end{equation}

\begin{equation}\label{eqn-projmh}
\begin{cases}
\la^2 \phi^{\perp}_t = \Delta_y \phi^{\perp} +3U^2(y)\phi^{\perp}+\mathcal H^{\perp}(\phi,\psi,\la,\xi)+c^0_*[\la,\xi,\Psi^*]Z_5~~&\mbox{ in }\mathcal D_{2R}\times (0,T)\\
\phi^{\perp}(\cdot,0)=0~~&\mbox{ in }\mathcal D_{2R}\\
\end{cases}
\end{equation}
\begin{eqnarray}
c^0[\mathcal H](t)-\tilde c^0[\la,\xi,\Psi^*](t)=0~\mbox{ for all }~t\in(0,T)\label{redu-la}\\
c^1[\mathcal H](t)=0~\mbox{ for all }~t\in(0,T)\label{redu-xi}
\end{eqnarray}
where $\eta_*$ is defined in \eqref{def-eta*}, $\mathcal G$ is defined in \eqref{def-mG}, $\mathcal H^0$, $\mathcal H^1$, $\mathcal H^{\perp}$ are the projections on different modes defined in \eqref{def-mHm0}--\eqref{def-mHmh}.

It is direct to see that if $(\psi,\phi^0,\phi^1,\phi^{\perp},\la,\xi)$ satisfies the system \eqref{eqn-outer}--\eqref{redu-xi}, then
$$\Psi^*=\psi+Z^*,~~\phi=\phi^0+\phi^1+\phi^{\perp}$$
solve the inner--outer problems \eqref{inner}, \eqref{outer}, and thus the desired blow-up solution is found.

\medskip


\subsection{The fixed point formulation}

\medskip

The inner--outer gluing system \eqref{eqn-outer}--\eqref{redu-xi} can be formulated as a fixed point problem for operators we shall describe below.

We first define the following function spaces
\begin{equation}\label{def-fcnspaces}
\begin{aligned}
&X_{\phi^0}:=\left\{\phi^0\in L^{\infty}(\mathcal D_{2R}\times (0,T)):~\nabla_y \phi^0\in L^{\infty}(\mathcal D_{2R}\times (0,T)),~\|\phi^0\|_{0,\sigma,\nu,a}<+\infty\right\},\\
&X_{\phi^1}:=\left\{\phi^1\in L^{\infty}(\mathcal D_{2R}\times (0,T)):~\nabla_y \phi^1\in L^{\infty}(\mathcal D_{2R}\times (0,T)),~\|\phi^1\|_{\nu_1,a_1}<+\infty\right\},\\
&X_{\phi^{\perp}}:=\left\{\phi^{\perp}\in L^{\infty}(\mathcal D_{2R}\times (0,T)):~\nabla_y \phi^{\perp}\in L^{\infty}(\mathcal D_{2R}\times (0,T)),~\|\phi^{\perp}\|_{\nu,a}<+\infty\right\},\\
&X_{\psi}:=\big\{\psi\in L^{\infty}(\mathcal D\times (0,T)):~\|\psi\|_{*}<+\infty,~\mbox{ $\psi$ is Lipschitz continuous with respect}\\
&\qquad\qquad\mbox{ to $(r,z)$ in $\mathcal D\times (0,T)$}\big\}.\\
\end{aligned}
\end{equation}

In order to introduce the space for the parameter function $\la(t)$, we recall from \eqref{def-mB0} that the integral operator $\mathcal B_0$ takes the following approximate form
$$\mathcal B_0[\la]=\int_{-T}^{t-\la_*^2(t)}\frac{\dot\la(s)}{t-s} ds+O(\|\dot\la\|_{\infty}).$$
Proposition \ref{keyprop-la} provides an approximate inverse operator $\mathcal P$ of the integral operator $\mathcal B_0$ such that for $a(t)$ satisfying \eqref{assump-a}, $\la:=\mathcal P[a]$ satisfies
$$\mathcal B_0[\la]=a+\mathcal R_0[a]~\mbox{ in }~[-T,T],$$
where $\mathcal R_0[a]$ is a small remainder. Also, the proof  in \cite{17HMF} gives the following decomposition
\begin{equation}\label{def-mP1}
\mathcal P[a]=\la_{0,\kappa}+\mathcal P_1[a]
\end{equation}
with
\begin{equation*}
\la_{0,\kappa}:=\kappa|\log T|\int_t^T \frac{1}{|\log(T-s)|^2}ds,~t\leq T
\end{equation*}
$\kappa=\kappa[a]\in\R$, and the function $\la_1=\mathcal P_1[a]$ satisfies
\begin{equation}\label{est-la1la1}
\|\la_1\|_{*,3-\iota}\lesssim |\log T|^{1-\iota}\log^2(|\log T|)
\end{equation}
for $0<\iota<1,$ where the $\|\cdot\|_{*,3-\iota}$-norm is defined as follows
\begin{equation*}
\|f\|_{*,k}:=\sup_{t\in[-T,T]}|\log(T-t)|^k |\dot f(t)|.
\end{equation*}
So we define
$$X_{\la}:=\{\la_1\in C^1([-T,T]):\la_1(T)=0,\|\la_1\|_{*,3-\iota}<\infty\}.$$
Here by $(\kappa,\la_1)$, we represent $\la$ in the form
$$\la=\la_{0,\kappa}+\la_1,$$
and from \cite{17HMF}, one can write the norm
\begin{equation}\label{def-normla}
\|\la\|_F=|\kappa|+\|\la_1\|_{*,3-\iota}.
\end{equation}

Recall that $\xi(t)=(\xi_r(t),\xi_z(t))$ with $\xi_r(t)=\sqrt{2(n-4)(T-t)}+\xi_{r,1}(t)$, $\xi_z(t)=z_0+\xi_{z,1}(t)$ and write $\xi(t)=\xi_*(t)+\xi_1(t)$. We define the following space for $(\xi_{r,1},\xi_{z,1})$
\begin{equation*}
X_{\xi}=\left\{\xi\in C^1((0,T);\R^4),~\dot\xi(T)=0,~\|\xi\|_{G}<+\infty\right\}
\end{equation*}
with
\begin{equation}\label{def-normxi}
\|\xi\|_G:=\sup_{t\in(0,T)} \left[(T-t)^{-\frac12-\upsilon}|\xi_{r,1}(t)|+M_1(T-t)^{\frac12-\upsilon}|\dot\xi_{r,1}(t)|+|\xi_{z_j}(t)|+(T-t)^{-\upsilon}|\dot\xi_{z_j}(t)|\right]
\end{equation}
for some $0<\upsilon<1$.

Define
\begin{equation}\label{def-mX}
\mathcal X=X_{\phi^0}\times X_{\phi^1}\times X_{\phi^{\perp}}\times X_{\psi}\times\R\times X_{\la}\times X_{\xi}.
\end{equation}
We shall solve the inner--outer gluing system in a closed ball $\mathcal B$ in which $(\phi^0,\phi^1,\phi^{\perp},\psi,\kappa,\la_1,\xi_1)\in\mathcal X$ satisfies
\begin{equation}\label{def-closedball}
\left\{
\begin{aligned}
&\|\phi^0\|_{0,\sigma,\nu,a}+\|\phi^1\|_{\nu_1,a_1}+\|\phi^{\perp}\|_{\nu,a}\leq 1\\
&\|\psi\|_*\leq 1\\
&|\kappa-\kappa_0|\leq |\log T|^{-1/2}\\
&\|\la_1\|_{*,3-\iota}\leq C|\log T|^{1-\iota}\log^2(|\log T|)\\
&\|\xi\|_G\leq 1\\
\end{aligned}
\right.
\end{equation}
for some large and fixed constant $C$, where $\kappa_0=Z^*_0(0)$.

The inner--outer gluing system \eqref{eqn-outer}--\eqref{redu-xi} can be formulated as a fixed point problem, where we define an operator $\mathcal F$ which returns the solution from $\mathcal B$ to $\mathcal X$
\begin{equation*}
\begin{aligned}
\mathcal F: \mathcal B\subset \mathcal X~\rightarrow& ~\mathcal X\\
v~\mapsto&~ \mathcal F(v)=(\mathcal F_{\phi^0}(v),\mathcal F_{\phi^1}(v),\mathcal F_{\phi^{\perp}}(v),\mathcal F_{\psi}(v),\mathcal F_{\kappa}(v),\mathcal F_{\la_1}(v),\mathcal F_{\xi}(v))
\end{aligned}
\end{equation*}
with
\begin{equation}\label{def-operators}
\begin{aligned}
\mathcal F_{\phi^0}(\phi^0,\phi^1,\phi^{\perp},\psi,\kappa,\la_1,\xi_1)=&~\mathcal T_0(\mathcal H^0[\la,\xi,\Psi^*]),\\
\mathcal F_{\phi^1}(\phi^0,\phi^1,\phi^{\perp},\psi,\kappa,\la_1,\xi_1)=&~\mathcal T_1(\mathcal H^1[\la,\xi,\Psi^*]),\\
\mathcal F_{\phi^{\perp}}(\phi^0,\phi^1,\phi^{\perp},\psi,\kappa,\la_1,\xi_1)=&~\mathcal T_{\perp}\left(\mathcal H^{\perp}[\la,\xi,\Psi^*]+c^0_*[\la,\xi,\Psi^*]Z_5\right),\\
\mathcal F_{\psi}(\phi^0,\phi^1,\phi^{\perp},\psi,\kappa,\la_1,\xi_1)=&~\mathcal T_{\psi}\left(\mathcal G(\phi^0+\phi^1+\phi^{\perp},\Psi^*,\la,\xi)\right),\\
\mathcal F_{\kappa}(\phi^0,\phi^1,\phi^{\perp},\psi,\kappa,\la_1,\xi_1)=&~\kappa\left[a^0[\la,\xi,\Psi^*]\right],\\
\mathcal F_{\la_1}(\phi^0,\phi^1,\phi^{\perp},\psi,\kappa,\la_1,\xi_1)=&~\mathcal P_1\left[a^0[\la,\xi,\Psi^*]\right],\\
\mathcal F_{\xi}(\phi^0,\phi^1,\phi^{\perp},\psi,\kappa,\la_1,\xi_1)=&~\Xi(\phi^0,\phi^1,\phi^{\perp},\psi,\la,\xi).\\
\end{aligned}
\end{equation}
Here $\mathcal T_0$, $\mathcal T_1$ and $\mathcal T_{\perp}$ are the operators given from Proposition \ref{lineartheory} which solve different modes of the inner problems \eqref{eqn-projm0}--\eqref{eqn-projmh}. The operator $\mathcal T_{\psi}$ defined by Proposition \ref{outer-apriori} deals with the outer problem \eqref{eqn-outer}. Operators $\kappa[a]$, $\mathcal P_1$ and $\Xi$ handle the equations for $\la$ and $\xi$ which are defined in Proposition \ref{keyprop-la}, \eqref{def-mP1} and \eqref{def-XiXi}.

\medskip


\medskip

\subsection{Choice of constants}\label{subsec-choices}

In this section, we list all the constraints of the parameters
$$\beta,\alpha,a,a_1,\nu,\nu_1,\nu_2,\sigma$$
which are sufficient for the inner--outer gluing scheme to work.

We first indicate all the parameters used in different norms.
\begin{itemize}
\item $R(t)=\la_*^{-\beta}(t)$ with $\beta\in(0,1/2)$.

\medskip

\item The norm for $\phi^0$ solving mode $0$ of the inner problem \eqref{eqn-projm0} is $\|\cdot\|_{0,\sigma,\nu,a}$ which is defined in \eqref{def-normm0}, where we require that $\nu,a\in(0,1)$, and $\sigma>0$ is fixed and sufficiently small.

\medskip

\item The norm for $\phi^1$ solving modes $1$ to $4$ of the inner problem \eqref{eqn-projm1} is $\|\cdot\|_{\nu_1,a_1}$ which is defined in \eqref{def-normnua}, where we require that $\nu_1\in(0,1)$ and $a_1\in(1,2)$.

\medskip

\item The norm for $\phi^{\perp}$ solving higher modes $(j\geq 5)$ of the inner problem \eqref{eqn-projmh} is $\|\cdot\|_{\nu,a}$ which is defined in \eqref{def-normnua}, where $\nu,a\in(0,1)$.

\medskip

\item The norm for $\psi$ solving the outer problem \eqref{eqn-outer} is $\|\cdot\|_*$ which is defined in \eqref{def-norm*}, while the $\|\cdot\|_{**}$-norm for the right hand side of the outer problem \eqref{eqn-outer} is defined in \eqref{def-norm**}. Here we require that $\nu,\alpha,\nu_2\in(0,1)$ and $\gamma\in(0,1)$.

\medskip

\item In Proposition \ref{keyprop-la}, we have the parameters $\omega,\Theta,m,l,\gamma$. Here $\omega$ is the parameter used to describe the remainder $\mathcal R_{\omega}$ in \eqref{def-mRw} and $\omega\in(0,1/2)$. To apply Proposition \ref{keyprop-la} in our setting, we let
$$\Theta=\nu-2+\frac4n+\beta(2+\alpha-\frac8n),$$
$$m=\nu-2-\gamma+\beta(2+\alpha),\quad l<1+2m,$$
and require that
$$\beta>\frac{1}{2}-\frac{\omega\gamma n}{8}$$
such that
$$m+(1+\omega)\gamma>\Theta$$
is guaranteed. Also, we need
$$\nu-2+\frac4n+\beta(2+\alpha-\frac8n)>0$$
to ensure that $\Theta>0$.
\end{itemize}

In order to get the desired estimates for the outer problem \eqref{eqn-outer}, by the computations in Section \ref{subsec-outer}, we need the following restrictions
\begin{equation*}
\begin{cases}
\nu-2+\frac4n+\beta(4+\alpha-\frac8n)-\nu_2>0,\\
2\beta-\nu_2>0,\\
4\beta-\nu_2-1>0,\\
0<\alpha<a<1,\\
\beta<\frac12,\\
\beta(1+\alpha-a_1)+\nu-\nu_1-\frac12<0,\\
\nu-\beta\left(2\sigma(4-a)+\alpha-8\right)>0,\\
2\nu_1-\nu-\beta(\alpha-2a_1)>0,\\
\nu-\beta(\alpha-2a)>0,\\
\nu_2<1,\\
2\nu-3+\frac8n-\nu_2+\beta(4+2\alpha-\frac{16}n)>0.\\
\end{cases}
\end{equation*}
In order to get the desired estimates for the inner problems at different modes \eqref{eqn-projm0}--\eqref{eqn-projmh}, by the computations in Section \ref{subsec-inner}, we need
\begin{equation*}
\begin{cases}
0<\nu<1,\\
\frac4n-1+\beta(2+\alpha-\frac8n)>0,\\
\frac12-\beta\sigma(4-a)>0,\\
\nu-2\beta\sigma(4-a)>0,\\
2-\nu-a\beta>0,\\
\nu-2+\frac8n+\beta(4+2\alpha-\frac{16}n)>0,\\
(4-a)\beta-\nu>0,\\
\frac32-\nu-\beta(1+a)>0,\\
0<\nu_1<1,\\
\nu-\nu_1-1+\frac4n+\beta(2+\alpha-\frac8n)>0,\\
1-2\beta>0,\\
\frac32-\nu_1-\beta(1+a_1)>0,\\
2-\nu_1-a_1\beta>0,\\
2\nu-\nu_1-2+\frac8n+\beta(4+2\alpha-a_1-\frac{16}n)>0,\\
2-\nu_1-a_1\beta>0,\\
(4-a_1)\beta-\nu_1>0.\\
\end{cases}
\end{equation*}
It turns out that suitable choices of the parameters satisfying all the restrictions in this section can be found for the space dimensions $n=5,6,7$. Here we give specific example for each case. Sound choices are listed as follows.
\begin{itemize}
\item $n=5:~\beta=\frac12-\epsilon,~\alpha=4\epsilon,~a=\frac{9}{2}\epsilon,~a_1=2-2\epsilon,~\nu=1-\frac32\epsilon,~\nu_1=3\epsilon,~\nu_2<1,~\mbox{ $\sigma>0$ small}$

\medskip

\item $n= 6:~\beta=\frac12-\epsilon,~\alpha=4\epsilon,~a=\frac{13}{3}\epsilon,~a_1=2-2\epsilon,~\nu=1-\frac54\epsilon,~\nu_1=\frac72\epsilon,~\nu_2<1,~\mbox{ $\sigma>0$ small}$

\medskip

\item $n= 7:~\beta=\frac12-\epsilon,~\alpha=4\epsilon,~a=\frac{57}{14}\epsilon,~a_1=2-2\epsilon,~\nu=1-\frac{15}{14}\epsilon,~\nu_1=3\epsilon,~\nu_2<1,~\mbox{ $\sigma>0$ small}$
\end{itemize}
where $\epsilon>0$ is fixed and sufficiently small.

\medskip


\subsection{Proof of Theorem \texorpdfstring{\ref{thm}}{1.1}}

\medskip

Consider the operator
\begin{equation}\label{def-mF}
\mathcal F=(\mathcal F_{\phi^0},\mathcal F_{\phi^1},\mathcal F_{\phi^{\perp}},\mathcal F_{\psi},\mathcal F_{\kappa},\mathcal F_{\la_1},\mathcal F_{\xi})
\end{equation}
given in \eqref{def-operators}.

To prove Theorem \ref{thm}, our strategy is to show that the operator $\mathcal F$ has a fixed point in $\mathcal B$ by the Schauder fixed point theorem. Here the closed ball $\mathcal B$ is defined in \eqref{def-closedball}. By collecting the estimates \eqref{outer-contraction}, \eqref{est-mHm0}, \eqref{est-mHm1}, \eqref{est-mHmp}, \eqref{est-Xir}, \eqref{est-Xiz}, \eqref{est-la1la1}, and using Proposition \ref{outer-apriori}, Proposition \ref{lineartheory}, Proposition \ref{keyprop-la}, we conclude that for $(\phi^0,\phi^1,\phi^{\perp},\psi,\kappa,\la_1,\xi_1)\in\mathcal B$
\begin{equation}\label{contraction-mF}
\left\{
\begin{aligned}
&\|\mathcal F_{\phi^0}(\phi^0,\phi^1,\phi^{\perp},\psi,\kappa,\la_1,\xi_1)\|_{0,\sigma,\nu,a}\leq CT^{\epsilon}\\
&\|\mathcal F_{\phi^1}(\phi^0,\phi^1,\phi^{\perp},\psi,\kappa,\la_1,\xi_1)\|_{\nu_1,a_1}\leq CT^{\epsilon}\\
&\|\mathcal F_{\phi^{\perp}}(\phi^0,\phi^1,\phi^{\perp},\psi,\kappa,\la_1,\xi_1)\|_{\nu,a}\leq CT^{\epsilon}\\
&\|\mathcal F_{\psi}(\phi^0,\phi^1,\phi^{\perp},\psi,\kappa,\la_1,\xi_1)\|_*\leq CT^{\epsilon}\\
&\left|\mathcal F_{\kappa}(\phi^0,\phi^1,\phi^{\perp},\psi,\kappa,\la_1,\xi_1)-\kappa_0\right|\leq C|\log T|^{-1}\\
&\|\mathcal F_{\la_1}(\phi^0,\phi^1,\phi^{\perp},\psi,\kappa,\la_1,\xi_1)\|_{*,3-\iota}\leq C|\log T|^{1-\iota}\log^2(|\log T|)\\
&\|\mathcal F_{\xi}(\phi^0,\phi^1,\phi^{\perp},\psi,\kappa,\la_1,\xi_1)\|_G\leq CT^{\epsilon}\\
\end{aligned}
\right.
\end{equation}
where $C>0$ is a constant independent of $T$, and $\epsilon>0$ is a small fixed number. On the other hand, compactness of the operator $\mathcal F$ defined in \eqref{def-mF} can be proved by suitable variants of \eqref{contraction-mF}. Indeed, if we vary the parameters $\beta,\alpha,a,a_1,\nu,\nu_1,\nu_2,\sigma$ slightly such that all the restrictions in Section \ref{subsec-choices} are satisfied, then we get \eqref{contraction-mF} with the norms in the left hand side defined by the new parameters, while the closed ball $\mathcal B$ remains the same. To be more specific, for fixed $\sigma',\nu',a'$ which are close to $\sigma,\nu,a$, one can show that if $(\phi^0,\phi^1,\phi^{\perp},\psi,\kappa,\la_1,\xi_1)\in\mathcal B$, then
$$\|\mathcal F_{\phi^0}(\phi^0,\phi^1,\phi^{\perp},\psi,\kappa,\la_1,\xi_1)\|_{0,\sigma',\nu',a'}\leq CT^{\epsilon'}.$$
Moreover, one can show that for $\nu'>\nu$ and $\nu'-\sigma'\beta(4-a')>\nu-\sigma\beta(4-a)$, one has a compact embedding in the sense that if a sequence $\{\phi^0_n\}$ is bounded in the $\|\cdot\|_{0,\sigma',\nu',a'}$-norm, then there exists a subsequence which converges in the $\|\cdot\|_{0,\sigma,\nu,a}$-norm. Thus, the compactness follows directly from a standard diagonal argument by Arzel\`a--Ascoli's theorem. Arguing in a similar manner, one can prove the compactness of the rest operators. Therefore, the existence of the desired solution follows from the Schauder fixed point theorem. The proof is complete.\qed

\medskip


\medskip

\appendix
\section{Proofs of technical Lemmas}

\medskip

\begin{proof}[Proof of Lemma \ref{lemma-rhs1}]
The proof is achieved by considering the following Cauchy problem in $\R^n$
\begin{equation}\label{outer-linear''''}
\begin{cases}
\partial_t\psi_0=\Delta \psi_0+f,~&\mbox{ in }\R^n\times(0,T)\\
\psi_0(x,0)=0,~&\mbox{ in }\R^n\\
\end{cases}
\end{equation}
If we decompose the solution to \eqref{outer-linear''} in the form
$$\psi=\psi_0+\psi_1,$$
then $\psi_1$ solves the homogeneous heat equation in $\Omega\times(0,T)$ with boundary condition $-\psi_1$. By standard parabolic estimates, it suffices to establish the estimates \eqref{outer-rhs1}--\eqref{outerholder-rhs1} for $\psi_0$. In the sequel, we denote $\psi$ by the solution to \eqref{outer-linear''''} given by Duhamel's formula
\begin{equation*}
\begin{aligned}
\psi(x,t)=&~\int_0^t \int_{\R^n} \frac{e^{-\frac{|x-w|^2}{4(t-s)}}}{(4\pi(t-s))^{n/2}} f(w,s) dw ds\\
\lesssim&~ \int_0^t \la_*^{\nu-3}(s) R^{-2-\alpha}(s) \int_{\left|(r,z)-\xi(s)\right|\leq 2\la_*(s)R(s)} \frac{e^{-\frac{|x-w|^2}{4(t-s)}}}{(4\pi(t-s))^{n/2}} dw ds,\\
\end{aligned}
\end{equation*}
where $w=(w_1,\cdots,w_n)$, $r=\sqrt{w_1^2+\cdots+w_{n-3}^2}$ and  $z=(w_{n-2},w_{n-1},w_n)$. We decompse
\begin{equation*}
\begin{aligned}
&\quad \int_0^t \la_*^{\nu-3}(s) R^{-2-\alpha}(s) \int_{\left|(r,z)-\xi(s)\right|\leq 2\la_*(s)R(s)} \frac{e^{-\frac{|x-w|^2}{4(t-s)}}}{(4\pi(t-s))^{n/2}} dw ds\\
&=\left(\int_0^{t-(T-t)}+\int_{t-(T-t)}^{t-\la^{\delta_1}_*(t)}+\int_{t-\la^{\delta_1}_*(t)}^t\right) \la_*^{\nu-3}(s) R^{-2-\alpha}(s) \int_{\left|(r,z)-\xi(s)\right|\leq 2\la_*(s)R(s)} \frac{e^{-\frac{|x-w|^2}{4(t-s)}}}{(4\pi(t-s))^{n/2}} dw ds\\
&:=I_{11}+I_{12}+I_{13}
\end{aligned}
\end{equation*}
for some $\delta_1\geq 1$ to be found. Here we recall that
$$\xi(t)\sim(\sqrt{2(n-4)(T-t)},z_0)$$
and take $z_0=(0,0,0)$ for convenience. Directly integrating, we obtain
\begin{equation*}
\begin{aligned}
I_{11}=&~ \int_0^{t-(T-t)}\la_*^{\nu-3}(s) R^{-2-\alpha}(s) \int_{\left|(r,z)-\xi(s)\right|\leq 2\la_*(s)R(s)} \frac{e^{-\frac{|x-w|^2}{4(t-s)}}}{(4\pi(t-s))^{n/2}} dw ds\\
\lesssim&~\int_0^{t-(T-t)}\la_*^{\nu-3}(s) R^{-2-\alpha}(s) \int_{\left|(\tilde r,\tilde z)-\frac{\xi(s)}{\sqrt{t-s}}\right|\leq \frac{2\la_*(s)R(s)}{\sqrt{t-s}}} e^{-\frac{|\tilde x-\tilde w|^2}{4}} d\tilde w ds\\
\lesssim&~ \int_0^{t-(T-t)} \la_*^{\nu-3}(s) R^{-2-\alpha}(s)\int_{\left|(\tilde r,\tilde z)-\frac{\xi(s)}{\sqrt{t-s}}\right|\leq \frac{2\la_*(s)R(s)}{\sqrt{t-s}}} (\tilde r)^{n-4} d\tilde r d\tilde z ds\\
\lesssim&~\int_0^{t-(T-t)} \la_*^{\nu-3}(s) R^{-2-\alpha}(s)\frac{\la_*(s)R(s)(\sqrt{T-s})^{n-4}}{(t-s)^{(n-3)/2}}\frac{\left(\la_*(s)R(s)\right)^3}{(t-s)^{3/2}} ds\\
\lesssim&~ \int_0^{t-(T-t)} \frac{\la_*^{\nu-3}(s) R^{-2-\alpha}(s)}{(T-s)^{n/2}}\left(\la_*(s)R(s)\right)^4 (T-s)^{\frac{n-4}{2}} ~ds\\
\lesssim&~ \la_*^{\nu}(0)R^{2-\alpha}(0),
\end{aligned}
\end{equation*}
where $\tilde x=x(t-s)^{-1/2}$, $\tilde w=w(t-s)^{-1/2}$, $\tilde r=r(t-s)^{-1/2}$, $\tilde z=z(t-s)^{-1/2}$, and for the third inequality above, we have used the fact that
$$\sqrt{T-s}\gg \la_*(s)R(s)$$
for $T\ll 1$ since $\la_*(s)R(s)=\la_*^{1-\beta}(s)$ with $0<\beta<1/2$. Then similarly we compute

\begin{equation}\label{est-I12}
\begin{aligned}
I_{12}\lesssim&~ \int_{t-(T-t)}^{t-\la^{\delta_1}_*(t)} \la_*^{\nu-3}(s) R^{-2-\alpha}(s)\int_{\left|(\tilde r,\tilde z)-\frac{\xi(s)}{\sqrt{t-s}}\right|\leq \frac{2\la_*(s)R(s)}{\sqrt{t-s}}} (\tilde r)^{n-4} d\tilde r d\tilde z ds\\
\lesssim&~\int_{t-(T-t)}^{t-\la^{\delta_1}_*(t)} \la_*^{\nu-3}(s) R^{-2-\alpha}(s)\frac{\la_*(s)R(s)(\sqrt{T-s})^{n-4}}{(t-s)^{(n-3)/2}}\frac{\left(\la_*(s)R(s)\right)^3}{(t-s)^{3/2}} ds\\
\lesssim&~ \la_*^{\nu+\frac{(n-2)(1-\delta_1)}{2}}(t)R^{2-\alpha}(t)|\log (T-t)|
\end{aligned}
\end{equation}
and
\begin{equation}\label{est-I13}
\begin{aligned}
I_{13}\lesssim&~ \int_{t-\la^{\delta_1}_*(t)}^t \la_*^{\nu-3}(s) R^{-2-\alpha}(s) ds\\
\lesssim&~ \la_*^{\nu-3+\delta_1}(t) R^{-2-\alpha}(t)|\log(T-t)|.
\end{aligned}
\end{equation}
Since $\beta\in(0,1/2)$ and $n\geq 5$, we can choose $\delta_1=\frac{n+4-8\beta}{n}$. Therefore, we get
$$I_{11}+I_{12}+I_{13}\lesssim \la_*^{\nu-2+\frac4n}(0) R^{-2-\alpha+\frac8n}(0)|\log T|$$
as desired.

Similarly, to prove \eqref{outerT-rhs1}, we decompose
$$|\psi(x,t)-\psi(x,T)|\leq I_{21}+I_{22}+I_{23}$$
with
$$I_{21}=\int_0^{t-(T-t)} \int_{\R^n}|G(x-w,t-s)-G(x-w,T-s)||f(w,s)|dwds,$$
$$I_{22}=\int_{t-(T-t)}^t \int_{\R^n}|G(x-w,t-s)-G(x-w,T-s)||f(w,s)|dwds,$$
$$I_{23}=\int_t^T \int_{\R^n}|G(x-w,T-s)||f(w,s)|dwds,$$
where $G(x,t)$ is the heat kernel
\begin{equation}\label{def-heatkernel}
G(x,t)=\frac{e^{-\frac{|x|^2}{4t}}}{(4\pi t)^{n/2}}.
\end{equation}
For the first integral $I_{21}$, we have
$$I_{21}\leq (T-t)\int_0^1\int_0^{t-(T-t)}\int_{|(w_r,w_z)-\xi(s)|\leq 2\la_*(s)R(s)} |\partial_t G(x-w,t_v-s)|\la_*^{\nu-3}(s)R^{-2-\alpha}(s) dwdsdv,$$
where $(w_r,w_z)=\left(\sqrt{w_1^2+\cdots+w_{n-3}^2},w_{n-2},w_{n-1},w_n\right)$ and $t_v=vT+(1-v)t$. Changing variables
$$w_v=(w_{r,v},w_{z,v})=(w_r (t_v-s)^{-1/2},w_z (t_v-s)^{-1/2}),$$
we evaluate
\begin{equation*}
\begin{aligned}
&\quad \int_{|(w_r,w_z)-\xi(s)|\leq 2\la_*(s)R(s)} |\partial_t G(x-w,t_v-s)| dw\\
&\lesssim \int_{|(w_r,w_z)-\xi(s)|\leq 2\la_*(s)R(s)} e^{-\frac{|x-w|^2}{4(t_v-s)}}\left(\frac{|x-w|^2}{(t_v-s)^{\frac{n+4}{2}}}+\frac{1}{(t_v-s)^{\frac{n+2}{2}}}\right)dw\\
&=\int_{\left|(w_{r,v},w_{z,v})-\frac{\xi(s)}{\sqrt{t_v-s}}\right|\leq \frac{2 \la_*(s)R(s)}{\sqrt{t_v-s}}} e^{-\frac{|x_v-w_v|^2}{4}}\left(1+|x_v-w_v|^2\right)\frac{1}{t_v-s}dw_v\\
\end{aligned}
\end{equation*}
and thus
\begin{equation*}
\begin{aligned}
&\quad\int_0^{t-(T-t)}\int_{|(w_r,w_z)-\xi(s)|\leq 2\la_*(s)R(s)} |\partial_t G(x-w,t_v-s)|\la_*^{\nu-3}(s)R^{-2-\alpha}(s) dwds\\
&\lesssim \int_0^{t-(T-t)} \la_*^{\nu-3}(s)R^{-2-\alpha}(s) \frac{(T-s)^{\frac{n-4}{2}}(\la_*(s)R(s))^4}{(t_v-s)^{\frac{n+2}{2}}}ds\\
&\lesssim \int_0^{t-(T-t)} \la_*^{\nu+1}(s)R^{2-\alpha}(s) (T-s)^{-3} ds\\
&\lesssim \la_*^{\nu-1}(t)R^{2-\alpha}(t),
\end{aligned}
\end{equation*}
from which we conclude that
\begin{equation}\label{est-I21}
I_{21}\lesssim \la_*^{\nu}(t)R^{2-\alpha}(t)|\log (T-t)|.
\end{equation}
For $I_{22}$, we have
\begin{equation*}
\begin{aligned}
I_{22}\leq&~ \int_{t-(T-t)}^t\int_{|(w_r,w_z)-\xi(s)|\leq 2\la_*(s)R(s)} |G(x-w,t-s)|\la_*^{\nu-3}(s)R^{-2-\alpha}(s)dwds\\
&~+ \int_{t-(T-t)}^t\int_{|(w_r,w_z)-\xi(s)|\leq 2\la_*(s)R(s)} |G(x-w,T-s)|\la_*^{\nu-3}(s)R^{-2-\alpha}(s)dwds.
\end{aligned}
\end{equation*}
The first integral above can be estimated as
\begin{equation*}
\begin{aligned}
&\quad \int_{t-(T-t)}^t \int_{|(w_r,w_z)-\xi(s)|\leq 2\la_*(s)R(s)} |G(x-w,t-s)|\la_*^{\nu-3}(s)R^{-2-\alpha}(s)dwds\\
&=\left(\int_{t-(T-t)}^{t-\la_*^{\delta_1}(t)}+\int_{t-\la_*^{\delta_1}(t)}^t\right) |G(x-w,t-s)|\la_*^{\nu-3}(s)R^{-2-\alpha}(s)dwds.\\
\end{aligned}
\end{equation*}
Notice that we already estimate the above integral in \eqref{est-I12} and \eqref{est-I13}. So with the choice $\delta_1=\frac{n+4-8\beta}{n}$, one has
\begin{equation*}
\begin{aligned}
&\quad \int_{t-(T-t)}^t \int_{|(w_r,w_z)-\xi(s)|\leq 2\la_*(s)R(s)} |G(x-w,t-s)|\la_*^{\nu-3}(s)R^{-2-\alpha}(s)dwds\\
&\lesssim  \la_*^{\nu-2+\frac4n}(t) R^{-2-\alpha+\frac8n}(t)|\log(T-t)|.
\end{aligned}
\end{equation*}
Similarly, it holds that
\begin{equation*}
\begin{aligned}
&\quad \int_{t-(T-t)}^t \int_{|(w_r,w_z)-\xi(s)|\leq 2\la_*(s)R(s)} |G(x-w,T-s)|\la_*^{\nu-3}(s)R^{-2-\alpha}(s)dwds\\
&\lesssim  \la_*^{\nu-2+\frac4n}(t) R^{-2-\alpha+\frac8n}(t)|\log(T-t)|.
\end{aligned}
\end{equation*}
Therefore, we obtain
\begin{equation}\label{est-I22}
I_{22}\lesssim \la_*^{\nu-2+\frac4n}(t) R^{-2-\alpha+\frac8n}(t)|\log(T-t)|.
\end{equation}
For $I_{23}$, changing variables
$$\tilde x=x(T-s)^{-1/2},~\tilde w=w(T-s)^{-1/2}~\mbox{ and }~(\tilde w_r,\tilde w_z)=(w_r (T-s)^{-1/2},w_z (T-s)^{-1/2}),$$
one has
\begin{equation}\label{est-I23}
\begin{aligned}
I_{23}\lesssim&~ \int_t^T \int_{\left|(w_r,w_z)-\xi(s)\right|\leq 2\la_*(s)R(s)}\frac{e^{-\frac{|x-w|^2}{4(T-s)}}}{(T-s)^{n/2}} \la_*^{\nu-3}(s)R^{-2-\alpha}(s)dwds\\
\lesssim&~ \int_t^T \int_{\left|(\tilde w_r, \tilde w_z)-\frac{\xi(s)}{\sqrt{T-s}}\right|\leq \frac{2\la_*(s)R(s)}{\sqrt{T-s}}} e^{-\frac{|\tilde x-\tilde w|^2}{4}} \la_*^{\nu-3}(s)R^{-2-\alpha}(s) d\tilde w ds\\
\lesssim&~ \int_t^T  \la_*^{\nu-3}(s)R^{-2-\alpha}(s)\frac{(T-s)^{\frac{n-4}{2}}(\la_*(s)R(s))^4}{(T-s)^{n/2}} ds\\
\lesssim&~ \int_t^T \frac{|\log T|^{\nu+1-\beta(2-\alpha)}(T-s)^{\nu-1-\beta(2-\alpha)}}{|\log (T-s)|^{2(\nu+1-\beta(2-\alpha))}} ds\\
\lesssim&~ \la_*^{\nu}(t)R^{2-\alpha}(t)
\end{aligned}
\end{equation}
provided $\nu-\beta(2-\alpha)>0$.
Collecting \eqref{est-I21}, \eqref{est-I22} and \eqref{est-I23}, we conclude the validity of \eqref{outerT-rhs1}.

Then we prove the gradient estimate \eqref{outergradient-rhs1}. By the heat kernel, we get
\begin{equation*}
\begin{aligned}
|\nabla \psi(x,t)|\lesssim&~ \int_0^t \frac{\la_*^{\nu-3}(s)R^{-2-\alpha}(s)}{(t-s)^{\frac{n+2}{2}}}\int_{\left|(w_r,w_z)-\xi(s)\right|\leq2 \la_*(s)R(s)} e^{-\frac{|x-w|^2}{4(t-s)}}|x-w| dwds\\
\lesssim&~ \int_0^t \frac{\la_*^{\nu-3}(s)R^{-2-\alpha}(s)}{(t-s)^{1/2}}\int_{\left|(\tilde w_r,\tilde w_z)-\frac{\xi(s)}{\sqrt{t-s}}\right|\leq \frac{2 \la_*(s)R(s)}{\sqrt{t-s}}}  e^{-\frac{|\tilde w|^2}{4}}(1+|\tilde w|) d\tilde wds,\\
\end{aligned}
\end{equation*}
where $\tilde x=x(t-s)^{-1/2}$,
$$(w_r,w_z)=\left(\sqrt{w_1^2+\cdots+w_{n-3}^2},w_{n-2},w_{n-1},w_n\right),$$
and
$$(\tilde w_r,\tilde w_z)=\left(\sqrt{\tilde w_1^2+\cdots+\tilde w_{n-3}^2},\tilde w_{n-2},\tilde w_{n-1},\tilde w_n\right).$$
First, we compute
\begin{equation}\label{grad-rhs11}
\begin{aligned}
&\quad\int_0^{t-(T-t)} \frac{\la_*^{\nu-3}(s)R^{-2-\alpha}(s)}{(t-s)^{1/2}}\int_{\left|(\tilde w_r,\tilde w_z)-\frac{\xi(s)}{\sqrt{t-s}}\right|\leq \frac{2 \la_*(s)R(s)}{\sqrt{t-s}}} e^{-\frac{|\tilde w|^2}{4}}(1+|\tilde w|) d\tilde wds\\
&\lesssim \int_0^{t-(T-t)} \frac{\la_*^{\nu-3}(s)R^{-2-\alpha}(s)}{(t-s)^{1/2}}\frac{\la_*(s)R(s)(\sqrt{T-s})^{n-4}}{(t-s)^{(n-3)/2}}\frac{\left(\la_*(s)R(s)\right)^3}{(t-s)^{3/2}} ds\\
&\lesssim \int_0^{t-(T-t)} \frac{\la_*^{\nu+1}(s)R^{2-\alpha}(s)(T-s)^{\frac{n-4}{2}}}{(t-s)^{\frac{n+1}{2}}} ds\\
&\lesssim \int_0^{t-(T-t)} \la_*^{\nu+1}(s)R^{2-\alpha}(s)(T-s)^{-\frac{5}{2}} ds\\
&\lesssim  \la_*^{\nu-\frac12}(0)R^{2-\alpha}(0)|\log T|.
\end{aligned}
\end{equation}
Then we compute
\begin{equation}\label{grad-rhs12}
\begin{aligned}
&\quad\int_{t-(T-t)}^{t-\la_*^{\delta_2}(t)} \frac{\la_*^{\nu-3}(s)R^{-2-\alpha}(s)}{(t-s)^{1/2}}\int_{\left|(\tilde w_r,\tilde w_z)-\frac{\xi(s)}{\sqrt{t-s}}\right|\leq \frac{2 \la_*(s)R(s)}{\sqrt{t-s}}} e^{-\frac{|\tilde w|^2}{4}}(1+|\tilde w|) d\tilde wds\\
&\lesssim \int_{t-(T-t)}^{t-\la_*^{\delta_2}(t)} \frac{\la_*^{\nu+1}(s)R^{2-\alpha}(s)(T-s)^{\frac{n-4}{2}}}{(t-s)^{\frac{n+1}{2}}} ds\\
&\lesssim \la_*^{\nu+\frac{n-2}{2}+\frac{(1-n)\delta_2}{2}}(t)R^{2-\alpha}(t) |\log(T-t)|,
\end{aligned}
\end{equation}
where $\delta_2\geq 1$ is a constant to be determined. On the other hand, we have
\begin{equation}\label{grad-rhs13}
\begin{aligned}
&\quad\int_{t-\la_*^{\delta_2}(t)}^t \frac{\la_*^{\nu-3}(s)R^{-2-\alpha}(s)}{(t-s)^{\frac{n+2}{2}}}\int_{\left|(w_r,w_z)-\xi(s)\right|\leq2 \la_*(s)R(s)} e^{-\frac{|x-w|^2}{4(t-s)}}|x-w| dwds\\
&\lesssim \int_{t-\la_*^{\delta_2}(t)}^t \frac{\la_*^{\nu-3}(s)R^{-2-\alpha}(s)}{(t-s)^{1/2}} ds\\
&\lesssim \la_*^{\nu-3+\frac{\delta_2}{2}}(t)R^{-2-\alpha}(t).
\end{aligned}
\end{equation}
By choosing $\delta_2=\frac{n+4-8\beta}{n}$ and combining \eqref{grad-rhs11}--\eqref{grad-rhs13}, we prove the validity of the gradient estimate \eqref{outergradient-rhs1}. The proof of \eqref{outergradientT-rhs1} is similar to that of \eqref{outerT-rhs1}. We omit the details.

To prove the H\"older estimate \eqref{outerholder-rhs1}, we decompose
$$|\psi(x,t_2)-\psi(x,t_1)|\leq J_{11}+J_{12}+J_{13}$$
with
$$J_{11}=\int_0^{t_1-(t_2-t_1)}\int_{\R^n} |G(x-w,t_2-s)-G(x-w,t_1-s)|f(w,s)dwds,$$
$$J_{12}=\int_{t_1-(t_2-t_1)}^{t_1}\int_{\R^n} |G(x-w,t_2-s)-G(x-w,t_1-s)|f(w,s)dwds,$$
and
$$J_{13}=\int_{t_1}^{t_2}\int_{\R^n} G(x-w,t_2-s)f(w,s)dwds,$$
where $G(x,t)$ is the heat kernel \eqref{def-heatkernel}. Here we assume that $0<t_1<t_2<T$ with $t_2<2t_1$.
For $J_{11}$, by letting $t_v=vt_2+(1-v)t_1$, we have
\begin{equation*}
\begin{aligned}
J_{11}\leq &~(t_2-t_1)\int_0^1 \int_0^{t_1-(t_2-t_1)}\int_{\R^n} |\partial_t G(x-w,t_v-s)|f(w,s)dwdsdv\\
\lesssim &~ (t_2-t_1)\int_0^1 \int_0^{t_1-(t_2-t_1)}\int_{\left|(w_r,w_z)-\xi(s)\right|\leq2 \la_*(s)R(s)} e^{-\frac{|x-w|^2}{4(t_v-s)}}\bigg(\frac{|x-w|^2}{(t_v-s)^{\frac{n+4}{2}}}\\
&\qquad\qquad\qquad\qquad\qquad\qquad+\frac{1}{(t_v-s)^{\frac{n+2}{2}}}\bigg) \la_*^{\nu-3}(s)R^{-2-\alpha}(s) dwdsdv,\\
\end{aligned}
\end{equation*}
where $(w_r,w_z)=\left(\sqrt{w_1^2+\cdots+w_{n-3}^2},w_{n-2},w_{n-1},w_n\right).$ Taking
$$x_v=x(t_v-s)^{-1/2},~w_v=w(t_v-s)^{-1/2}~\mbox{ and }~(w_{r,v},w_{z,v})=(w_r (t_v-s)^{-1/2},w_z (t_v-s)^{-1/2}),$$ we get
\begin{equation*}
\begin{aligned}
&~\int_{\left|(w_r,w_z)-\xi(s)\right|\leq2 \la_*(s)R(s)} e^{-\frac{|x-w|^2}{4(t_v-s)}}\left(\frac{|x-w|^2}{(t_v-s)^{\frac{n+4}{2}}}+\frac{1}{(t_v-s)^{\frac{n+2}{2}}}\right)\la_*^{\nu-3}(s)R^{-2-\alpha}(s) dw\\
=&~\int_{\left|(w_{r,v},w_{z,v})-\frac{\xi(s)}{\sqrt{t_v-s}}\right|\leq \frac{2 \la_*(s)R(s)}{\sqrt{t_v-s}}} e^{-\frac{|x_v-w_v|^2}{4}}\left(1+|x_v-w_v|^2\right)\frac{\la_*^{\nu-3}(s)R^{-2-\alpha}(s)}{t_v-s}dw_v.\\
\end{aligned}
\end{equation*}
Observing that for any $\mu\in(0,1)$, we have
$$\int_{\left|(w_{r,v},w_{z,v})-\frac{\xi(s)}{\sqrt{t_v-s}}\right|\leq \frac{2 \la_*(s)R(s)}{\sqrt{t_v-s}}} e^{-\frac{|x_v-w_v|^2}{4}}\left(1+|x_v-w_v|^2\right) dw_v \lesssim \left(\frac{\sqrt{T-s}}{\sqrt{t_v-s}}\right)^{\mu},$$
where we have used the facts that $\xi(s)\sim(\sqrt{2(n-4)(T-s)},0,0,0)$ and $\sqrt{T-s}\gg \la_*(s)R(s)$ for $T\ll 1$. Thus, one has
\begin{equation*}
\begin{aligned}
J_{11}\lesssim (t_2-t_1) \int_0^{t_1-(t_2-t_1)} \frac{\la_*^{\nu-3}(s)R^{-2-\alpha}(s)(T-s)^{\frac{\mu}{2}}}{(t_2-s)^{1+\frac{\mu}{2}}}ds.
\end{aligned}
\end{equation*}
Recalling that $R(t)=\la_*^{-\beta}(t)$ for $\beta\in(0,1/2)$, we have the following two cases
\begin{itemize}
\item If $\nu-3+\beta(2+\alpha)+\frac{\mu}{2}< 0$, then we have
\begin{equation*}
\begin{aligned}
&\quad \int_0^{t_1-(t_2-t_1)} \frac{\la_*^{\nu-3}(s)R^{-2-\alpha}(s)(T-s)^{\frac{\mu}{2}}}{(t_2-s)^{1+\frac{\mu}{2}}}ds\\
&\lesssim \la_*^{\nu-3+\frac{\mu}{2}}(t_1)R^{-2-\alpha}(t_1)|\log(T-t_1)|\int_0^{t_1-(t_2-t_1)}\frac{1}{(t_2-s)^{1+\frac{\mu}{2}}}ds\\
&\lesssim \la_*^{\nu-3+\frac{\mu}{2}}(t_1)R^{-2-\alpha}(t_1)|\log(T-t_1)|(t_2-t_1)^{-\mu/2}.
\end{aligned}
\end{equation*}

\item If $\nu-3+\beta(2+\alpha)+\frac{\mu}{2}\geq 0$, then we decompose
\begin{equation*}
\begin{aligned}
&\quad\int_0^{t_1-(t_2-t_1)} \frac{\la_*^{\nu-3}(s)R^{-2-\alpha}(s)(T-s)^{\frac{\mu}{2}}}{(t_2-s)^{1+\frac{\mu}{2}}}ds\\
&= \left(\int_0^{t_1-(T-t_1)}+\int_{t_1-(T-t_1)}^{t_1-(t_2-t_1)}\right) \frac{\la_*^{\nu-3}(s)R^{-2-\alpha}(s)(T-s)^{\frac{\mu}{2}}}{(t_2-s)^{1+\frac{\mu}{2}}}ds.
\end{aligned}
\end{equation*}
Assuming
$$\nu-3+\beta(2+\alpha)<0,$$
we obtain that
\begin{equation*}
\begin{aligned}
&\quad\int_0^{t_1-(T-t_1)} \frac{\la_*^{\nu-3}(s)R^{-2-\alpha}(s)(T-s)^{\frac{\mu}{2}}}{(t_2-s)^{1+\frac{\mu}{2}}}ds\\
&\lesssim \int_0^{t_1-(T-t_1)} \frac{\la_*^{\nu-3}(s)R^{-2-\alpha}(s)(T-s)^{\frac{\mu}{2}}}{(T-s)^{1+\frac{\mu}{2}}}ds\\
&=\int_0^{t_1-(T-t_1)}\frac{|\log T|^{\nu-3+\beta(2+\alpha)}(T-s)^{\nu-4+\beta(2+\alpha)}}{|\log(T-s)|^{2(\nu-3+\beta(2+\alpha))}} ds\\
&\lesssim\la_*^{\nu+\frac{\mu}{2}-3}(t_2)R^{-2-\alpha}(t_2)(t_2-t_1)^{-\mu/2}
\end{aligned}
\end{equation*}
and similarly
\begin{equation*}
\begin{aligned}
\int_{t_1-(T-t_1)}^{t_1-(t_2-t_1)} \frac{\la_*^{\nu-3}(s)R^{-2-\alpha}(s)(T-s)^{\frac{\mu}{2}}}{(t_2-s)^{1+\frac{\mu}{2}}}ds\lesssim \la_*^{\nu+\frac{\mu}{2}-3}(t_2)R^{-2-\alpha}(t_2)(t_2-t_1)^{-\mu/2}.
\end{aligned}
\end{equation*}
\end{itemize}
In both cases, we have
$$J_{11}\lesssim \la_*^{\nu+\frac{\mu}{2}-3}(t_2)R^{-2-\alpha}(t_2)(t_2-t_1)^{1-\mu/2}.$$

For $J_{12}$, we evaluate
\begin{equation*}
\begin{aligned}
&~\int_{t_1-(t_2-t_1)}^{t_1}\int_{\R^n} |G(x-w,t_1-s)|f(w,s)dwds\\
\lesssim&~ \int_{t_1-(t_2-t_1)}^{t_1} \la_*^{\nu-3}(s)R^{-2-\alpha}(s)\int_{\frac{\left|(w_r,w_z)-\xi(s)\right|}{\sqrt{t_1-s}}\leq \frac{2 \la_*(s)R(s)}{\sqrt{t_1-s}}} e^{-\frac{|\tilde x-\tilde w|^2}{4}}d \tilde w ds\\
\lesssim&~ \int_{t_1-(t_2-t_1)}^{t_1} \la_*^{\nu-3}(s)R^{-2-\alpha}(s)\left(\frac{\sqrt{T-s}}{\sqrt{t_1-s}}\right)^{\mu} ds\\
=&~ \int_{t_1-(t_2-t_1)}^{t_1} \frac{|\log T|^{\nu-3+\beta(2+\alpha)}(T-s)^{\nu+\frac{\mu}{2}-3+\beta(2+\alpha)}}{|\log(T-s)|^{2(\nu-3+\beta(2+\alpha))}(t_1-s)^{\mu/2}} ds\\
\lesssim&~ \la_*^{\nu+\frac{\mu}{2}-3}(t_2)R^{-2-\alpha}(t_2) (t_2-t_1)^{1-\mu/2},
\end{aligned}
\end{equation*}
where we have changed variables $\tilde x=x(t_1-s)^{-1/2}$, $\tilde w=w(t_1-s)^{-1/2}$, and $\mu\in(0,1)$. Similarly, we have
$$\int_{t_1-(t_2-t_1)}^{t_1}\int_{\R^n} |G(x-w,t_2-s)|f(w,s)dwds\lesssim \la_*^{\nu+\frac{\mu}{2}-3}(t_2)R^{-2-\alpha}(t_2) (t_2-t_1)^{1-\mu/2}.$$
Thus we conclude that
$$J_{12}\lesssim \la_*^{\nu+\frac{\mu}{2}-3}(t_2)R^{-2-\alpha}(t_2) (t_2-t_1)^{1-\mu/2}.$$
Finally, for $J_{13}$,
\begin{equation*}
\begin{aligned}
J_{13}=&~\int_{t_1}^{t_2}\int_{\R^n} G(x-w,t_2-s)f(w,s)dwds\\
\lesssim&~ \int_{t_1}^{t_2} \la_*^{\nu-3}(s)R^{-2-\alpha}(s) \int_{\frac{\left|(w_r,w_z)-\xi(s)\right|}{\sqrt{t_2-s}}\leq \frac{2 \la_*(s)R(s)}{\sqrt{t_2-s}}} e^{-\frac{|\tilde x-\tilde w|^2}{4}}d \tilde w ds\\
\lesssim&~ \la_*^{\nu+\frac{\mu}{2}-3}(t_2)R^{-2-\alpha}(t_2) (t_2-t_1)^{1-\mu/2}
\end{aligned}
\end{equation*}
follows from the same argument as before, where $\tilde x=x(t_2-s)^{-1/2}$ and $\tilde w=w(t_2-s)^{-1/2}$. This completes the proof of \eqref{outerholder-rhs1}.
\end{proof}

\begin{proof}[Proof of Lemma \ref{lemma-rhs2}]
We first prove \eqref{outer-rhs2}. Similar to the proof of Lemma \ref{lemma-rhs1}, Duhamel's formula gives
\begin{equation*}
\begin{aligned}
|\psi(x,t)|\lesssim& \int_0^t \frac{\la^{\nu_2}_*(s)}{(t-s)^{n/2}}\int_{\la_*(s)R(s)\leq \left|(w_r,w_z)-\xi(s)\right|\leq 2\delta\sqrt{T-s}} \frac{e^{-\frac{|x-w|^2}{4(t-s)}}}{\left|(w_r,w_z)-\xi(s)\right|^2} dwds\\
\lesssim& \int_0^t \frac{\la^{\nu_2}_*(s)}{t-s}\int_{\frac{\la_*(s)R(s)}{\sqrt{t-s}}\leq\left|(\tilde w_r,\tilde w_z)-\frac{\xi(s)}{\sqrt{t-s}}\right|\leq \frac{2\delta\sqrt{T-s}}{\sqrt{t-s}}} \frac{e^{-\frac{|\tilde x-\tilde w|^2}{4}}}{\left|(\tilde w_r,\tilde w_z)-\frac{\xi(s)}{\sqrt{t-s}}\right|^2} d\tilde wds,\\
\end{aligned}
\end{equation*}
where $\tilde x=x(t-s)^{-1/2}$,
$$(w_r,w_z)=\left(\sqrt{w_1^2+\cdots+w_{n-3}^2},w_{n-2},w_{n-1},w_n\right),$$
and
$$(\tilde w_r,\tilde w_z)=\left(\sqrt{\tilde w_1^2+\cdots+\tilde w_{n-3}^2},\tilde w_{n-2},\tilde w_{n-1},\tilde w_n\right)$$
with $\tilde w_i=w_i(t-s)^{-1/2}$, $i=1,\cdots,n$. We have
\begin{equation*}
\begin{aligned}
|\psi(x,t)|\lesssim&~ \int_0^t \frac{\la^{\nu_2}_*(s)}{t-s}\int_{\frac{\la_*(s)R(s)}{\sqrt{t-s}}\leq\left|(\tilde w_r,\tilde w_z)-\frac{\xi(s)}{\sqrt{t-s}}\right|\leq \frac{2\delta\sqrt{T-s}}{\sqrt{t-s}}} \frac{e^{-\frac{|\tilde x-\tilde w|^2}{4}}}{\left|(\tilde w_r,\tilde w_z)-\frac{\xi(s)}{\sqrt{t-s}}\right|^2} d\tilde wds\\
\lesssim&~ \int_0^t \frac{\la^{\nu_2}_*(s)}{t-s} \frac{t-s}{\la_*^2(s)R^2(s)} ds\\
=&~ \int_0^t \frac{|\log T|^{\nu_2-2+2\beta}(T-s)^{\nu_2-2+2\beta}}{|\log (T-s)|^{2\nu_2-4+4\beta}} ds\\
\lesssim&~ \la_*^{\nu_2-1}(0) R^{-2}(0)|\log T|
\end{aligned}
\end{equation*}
as desired.


Now we prove the gradient estimate \eqref{outergradient-rhs2}. By the heat kernel, we have
\begin{equation*}
\begin{aligned}
|\nabla \psi(x,t)|\lesssim& ~\int_0^t \frac{\la_*^{\nu_2}(s)}{(t-s)^{\frac{n+2}{2}}}\int_{\la_*(s)R(s)\leq \left|(w_r,w_z)-\xi(s)\right|\leq 2\delta\sqrt{T-s}} \frac{e^{-\frac{|x-w|^2}{4(t-s)}}|x-w|}{|(w_r,w_z)-\xi(s)|^2} dw ds\\
=&~\int_0^t \frac{\la_*^{\nu_2}(s)}{(t-s)^{3/2}}\int_{\frac{\la_*(s)R(s)}{\sqrt{t-s}}\leq\left|(\tilde w_r,\tilde w_z)-\frac{\xi(s)}{\sqrt{t-s}}\right|\leq \frac{2\delta\sqrt{T-s}}{\sqrt{t-s}}} \frac{e^{-\frac{|\tilde x-\tilde w|^2}{4}}|\tilde x-\tilde w|}{\left|(\tilde w_r,\tilde w_z)-\frac{\xi(s)}{\sqrt{t-s}}\right|^2} d\tilde w ds\\
\lesssim&~\int_0^t \frac{\la_*^{\nu_2}(s)}{(t-s)^{3/2}}\frac{t-s}{\la_*^2(s)R^2(s)} ds\\
\lesssim&~\la_*^{\nu_2-2}(t)R^{-2}(t)\sqrt{t}.
\end{aligned}
\end{equation*}

Next we prove the H\"older estimate \eqref{outerholder-rhs2}. We decompose
$$|\psi(x,t_2)-\psi(x,t_1)|\leq K_{11}+K_{12}+K_{13}$$
with
$$K_{11}=\int_0^{t_1-(t_2-t_1)}\int_{\R^n} |G(x-w,t_2-s)-G(x-w,t_1-s)|f(w,s)dwds,$$
$$K_{12}=\int_{t_1-(t_2-t_1)}^{t_1}\int_{\R^n} |G(x-w,t_2-s)-G(x-w,t_1-s)|f(w,s)dwds,$$
and
$$K_{13}=\int_{t_1}^{t_2}\int_{\R^n} G(x-w,t_2-s)f(w,s)dwds,$$
where $G(x,t)$ is the heat kernel \eqref{def-heatkernel}. For $K_{13}$, we have
\begin{equation*}
\begin{aligned}
|K_{13}|\lesssim & \int_{t_1}^{t_2} \int_{\la_*(s)R(s)\leq |(w_r,w_z)-\xi(s)|\leq 2\delta \sqrt{T-s}}\frac{e^{-\frac{|x-w|^2}{4(t_2-s)}}}{(t_2-s)^{n/2}}\frac{\la_*^{\nu_2}(s)}{|(w_r,w_z)-\xi(s)|^2} dw ds\\
=&\int_{t_1}^{t_2} \int_{\frac{\la_*(s)R(s)}{\sqrt{t_2-s}}\leq \left|(\tilde w_r,\tilde w_z)-\frac{\xi(s)}{\sqrt{t_2-s}}\right|\leq \frac{2\delta \sqrt{T-s}}{\sqrt{t_2-s}}}\frac{e^{-\frac{|\tilde x-\tilde w|^2}{4}}}{t_2-s}\frac{\la_*^{\nu_2}(s)}{\left|(\tilde w_r,\tilde w_z)-\frac{\xi(s)}{\sqrt{t_2-s}}\right|^2} d\tilde w ds\\
=& \int_{t_1}^{t_2}\left( \int_{A_1}+\int_{A_2}\right)\frac{e^{-\frac{|\tilde x-\tilde w|^2}{4}}}{t_2-s}\frac{\la_*^{\nu_2}(s)}{\left|(\tilde w_r,\tilde w_z)-\frac{\xi(s)}{\sqrt{t_2-s}}\right|^2} d\tilde w ds,\\
\end{aligned}
\end{equation*}
where
\begin{equation*}
A_1=\left\{\tilde w\in\R^n:~\frac{\la_*(s)R(s)}{\sqrt{t_2-s}}\leq \left|(\tilde w_r,\tilde w_z)-\frac{\xi(s)}{\sqrt{t_2-s}}\right|\leq \frac{2\delta \sqrt{T-s}}{\sqrt{t_2-s}},~|\tilde x-\tilde w|< \frac{(T-s)^{\delta_3}}{(t_2-s)^{\delta_4}}\right\}
\end{equation*}
and
\begin{equation*}
A_2=\left\{\tilde w\in\R^n:~\frac{\la_*(s)R(s)}{\sqrt{t_2-s}}\leq \left|(\tilde w_r,\tilde w_z)-\frac{\xi(s)}{\sqrt{t_2-s}}\right|\leq \frac{2\delta \sqrt{T-s}}{\sqrt{t_2-s}},~|\tilde x-\tilde w| \geq \frac{(T-s)^{\delta_3}}{(t_2-s)^{\delta_4}}\right\}.
\end{equation*}
Then, one has
\begin{equation*}
\begin{aligned}
&\quad\int_{t_1}^{t_2}\int_{A_1}\frac{e^{-\frac{|\tilde x-\tilde w|^2}{4}}}{t_2-s}\frac{\la_*^{\nu_2}(s)}{\left|(\tilde w_r,\tilde w_z)-\frac{\xi(s)}{\sqrt{t_2-s}}\right|^2} d\tilde w ds\\
&\lesssim \int_{t_1}^{t_2} \frac{\la_*^{\nu_2}(s)}{t_2-s} \frac{t_2-s}{\la_*^2(s)R^2(s)}\frac{(T-s)^{n\delta_3}}{(t_2-s)^{n\delta_4}} ds\\
&\lesssim \la_*^{\nu_2-2+n\delta_3}(t_2) R^{-2}(t_2) (t_2-t_1)^{1-n\delta_4}
\end{aligned}
\end{equation*}
and
\begin{equation*}
\begin{aligned}
&\quad\int_{t_1}^{t_2}\int_{A_2}\frac{e^{-\frac{|\tilde x-\tilde w|^2}{4}}}{t_2-s}\frac{\la_*^{\nu_2}(s)}{\left|(\tilde w_r,\tilde w_z)-\frac{\xi(s)}{\sqrt{t_2-s}}\right|^2} d\tilde w ds\\
&\lesssim \int_{t_1}^{t_2} \frac{\la_*^{\nu_2}(s)}{t_2-s} \left(\frac{(t_2-s)^{\delta_4}}{(T-s)^{\delta_3}}\right)^{\delta_5}\left(\sqrt{\frac{T-s}{t_2-s}}\right)^{n-2} ds\\
&\lesssim \la_*^{\nu_2-\delta_3\delta_5+\frac{n-2}{2}}(t_2) (t_2-t_1)^{\delta_4\delta_5-\frac{n-2}{2}},
\end{aligned}
\end{equation*}
where we have used
$$
\int_{\frac{\la_*(s)R(s)}{\sqrt{t_2-s}}\leq \left|(\tilde w_r,\tilde w_z)-\frac{\xi(s)}{\sqrt{t_2-s}}\right|\leq \frac{2\delta \sqrt{T-s}}{\sqrt{t_2-s}}} \frac{1}{\left|(\tilde w_r,\tilde w_z)-\frac{\xi(s)}{\sqrt{t_2-s}}\right|^2} d\tilde w ds \lesssim \left(\sqrt{\frac{T-s}{t_2-s}}\right)^{n-2}.
$$

Similar to the estimate of $K_{13}$, we evaluate
\begin{equation*}
\begin{aligned}
|K_{11}|\lesssim &~ (t_2-t_1)\int_0^1 \int_0^{t_1-(t_2-t_1)}\int_{\la_*(s)R(s)\leq |(w_r,w_z)-\xi(s)|\leq 2\delta \sqrt{T-s}} e^{-\frac{|x-w|^2}{4(t_v-s)}}\left(\frac{|x-w|^2}{(t_v-s)^{\frac{n+4}{2}}}\right.\\
&\qquad\qquad\qquad\qquad\left.+\frac{1}{(t_v-s)^{\frac{n+2}{2}}}\right)\frac{\la_*^{\nu_2}(s)}{|(w_r,w_z)-\xi(s)|^2} dw ds dv\\
=&~ (t_2-t_1)\int_0^1 \int_0^{t_1-(t_2-t_1)}\int_{\frac{\la_*(s)R(s)}{\sqrt{t_v-s}}\leq \left|(\tilde w_r,\tilde w_z)-\frac{\xi(s)}{\sqrt{t_v-s}}\right|\leq \frac{2\delta \sqrt{T-s}}{\sqrt{t_v-s}}} \frac{e^{-\frac{|\tilde x-\tilde w|^2}{4}}}{(t_v-s)^2}\left(|\tilde x-\tilde w|^2\right.\\
&\qquad\qquad\qquad\qquad\left.+1\right)\frac{\la_*^{\nu_2}(s)}{\left|(\tilde w_r,\tilde w_z)-\frac{\xi(s)}{\sqrt{t_v-s}}\right|^2} d\tilde w ds dv\\
\lesssim &~ \la_*^{\nu_2-2+n\delta_3}(t_2) R^{-2}(t_2) (t_2-t_1)^{1-n\delta_4}+\la_*^{\nu_2-\delta_3\delta_5+\frac{n-2}{2}}(t_2) (t_2-t_1)^{\delta_4\delta_5-\frac{n-2}{2}}.
\end{aligned}
\end{equation*}
Estimate of $K_{12}$ can be carried out similarly. Therefore, collecting the estimates above, we conclude that
\begin{equation*}
|\psi(x,t_2)-\psi(x,t_1)| \lesssim \la_*^{\nu_2-2+n\delta_3}(t_2) R^{-2}(t_2) (t_2-t_1)^{1-n\delta_4}+\la_*^{\nu_2-\delta_3\delta_5+\frac{n-2}{2}}(t_2) (t_2-t_1)^{\delta_4\delta_5-\frac{n-2}{2}}.
\end{equation*}
Taking $\delta_3=\delta_4$ and $1-n\delta_4=\delta_4\delta_5-\frac{n-2}{2}=\gamma$ with $0<\gamma<1$, we obtain
$$
|\psi(x,t_2)-\psi(x,t_1)| \lesssim \la_*^{\nu_2-1-\gamma}(t_2) R^{-2}(t_2) (t_2-t_1)^{\gamma}
$$
as desired.


Estimates \eqref{outerT-rhs2} and \eqref{outergradientT-rhs2} follow in a similar manner as that of Lemma \ref{lemma-rhs1}. We omit the details.
\end{proof}

\begin{proof}[Proof of Lemma \ref{lemma-rhs3}]
By the heat kernel, we directly get
$$|\psi(x,t)|\lesssim \int_0^t\int_{\R^n} \frac{e^{-\frac{|x-w|^2}{4(t-s)}}}{(t-s)^{n/2}} dwds\lesssim t.$$
To prove the gradient estimate, we write
\begin{equation*}
\begin{aligned}
|\nabla \psi(x,t)|\lesssim &\int_0^t \frac{1}{(t-s)^{\frac{n+2}{2}}}\int_{\R^n} e^{-\frac{|x-w|^2}{4(t-s)}}|x-w| dwds\\
\lesssim & \int_0^t \frac{1}{(t-s)^{1/2}}\int_{\R^n} e^{-\frac{|\tilde x-\tilde w|^2}{4}}|\tilde x-\tilde w| d\tilde wds\\
\lesssim & \int_0^t \frac{1}{(t-s)^{1/2}} ds\lesssim t^{1/2}.
\end{aligned}
\end{equation*}
The proofs of \eqref{outerT-rhs3} and \eqref{outergradientT-rhs3} can be carried out similarly. So we omit the details.

In order to prove the H\"older estimate \eqref{outerholder-rhs3}, we decompose
$$|\psi(x,t_2)-\psi(x,t_1)|\leq J_{31}+J_{32}+J_{33}$$
with
$$J_{31}=\int_0^{t_1-(t_2-t_1)}\int_{\R^n} |G(x-w,t_2-s)-G(x-w,t_1-s)|f(w,s)dwds,$$
$$J_{32}=\int_{t_1-(t_2-t_1)}^{t_1}\int_{\R^n} |G(x-w,t_2-s)-G(x-w,t_1-s)|f(w,s)dwds,$$
and
$$J_{33}=\int_{t_1}^{t_2}\int_{\R^n} G(x-w,t_2-s)f(w,s)dwds,$$
where $G(x,t)$ is the heat kernel \eqref{def-heatkernel}.
Here we assume that $0<t_1<t_2<T$ with $t_2<2t_1$.
For $J_{31}$, by letting $t_v=vt_2+(1-v)t_1$, we have
\begin{equation*}
\begin{aligned}
J_{31}\leq &~(t_2-t_1)\int_0^1 \int_0^{t_1-(t_2-t_1)}\int_{\R^n} |\partial_t G(x-w,t_v-s)|f(w,s)dwdsdv\\
\lesssim &~ (t_2-t_1)\int_0^1 \int_0^{t_1-(t_2-t_1)}\int_{\R^n} e^{-\frac{|x-w|^2}{4(t_v-s)}}\left(\frac{|x-w|^2}{(t_v-s)^{\frac{n+4}{2}}}+\frac{1}{(t_v-s)^{\frac{n+2}{2}}}\right)  dwdsdv.\\
\end{aligned}
\end{equation*}
Let $x_v=x(t_v-s)^{-1/2}$ and $w_v=w(t_v-s)^{-1/2}$. We then have
\begin{equation*}
\begin{aligned}
&~\int_{\R^n} e^{-\frac{|x-w|^2}{4(t_v-s)}}\left(\frac{|x-w|^2}{(t_v-s)^{\frac{n+4}{2}}}+\frac{1}{(t_v-s)^{\frac{n+2}{2}}}\right) dw\\
=&~\int_{\R^n} e^{-\frac{|x_v-w_v|^2}{4}}\left(|x_v-w_v|^2+1\right)\frac{1}{t_v-s}dw_v\\
\lesssim &~\frac{1}{t_v-s} \int_{0}^{+\infty} e^{-\frac{r^2}{4}}(1+r^2) r^{n-1}d r.
\end{aligned}
\end{equation*}
So we get
\begin{equation*}
\begin{aligned}
J_{31}\lesssim (t_2-t_1)\int_0^{t_1-(t_2-t_1)} \frac{1}{t_v-s} ds\lesssim (t_2-t_1)|\log (t_2-t_1)|.
\end{aligned}
\end{equation*}

For $J_{32}$, we evaluate
\begin{equation*}
\begin{aligned}
&~\int_{t_1-(t_2-t_1)}^{t_1}\int_{\R^n} |G(x-w,t_1-s)|f(w,s)dwds\\
\lesssim&~ \int_{t_1-(t_2-t_1)}^{t_1}\int_{\R^n}e^{-\frac{|\tilde x-\tilde w|^2}{4}}d \tilde w ds\\
\lesssim&~ t_2-t_1
\end{aligned}
\end{equation*}
and similarly
$$\int_{t_1-(t_2-t_1)}^{t_1}\int_{\R^n} |G(x-w,t_2-s)|f(w,s)dwds\lesssim t_2-t_1,$$
where we have changed variables $\tilde x=x(t_1-s)^{-1/2}$ and $\tilde w=w(t_1-s)^{-1/2}$. Thus we conclude that
$$J_{32}\lesssim t_2-t_1.$$
Finally, for $J_{33}$
\begin{equation*}
\begin{aligned}
J_{33}\lesssim&~ \int_{t_1}^{t_2}  \int_{\R^n}e^{-\frac{|\tilde x-\tilde w|^2}{4}}d \tilde w ds\\
\lesssim&~ t_2-t_1
\end{aligned}
\end{equation*}
follows from the same argument as before. This completes the proof of \eqref{outerholder-rhs3} by collecting the estimates of $J_{31}$, $J_{32}$ and $J_{33}$.
\end{proof}


\bigskip

\section*{Acknowledgements}

\bigskip

M.~del Pino has been supported by a UK Royal Society Research Professorship and Grant PAI AFB-170001, Chile. M. Musso has been partly supported by Fondecyt grant 1160135, Chile. The  research  of J.~Wei is partially supported by NSERC of Canada.

\medskip






\end{document}